\pdfoutput=1 

\RequirePackage{fix-cm}
\documentclass[smallextended]{svjour3}
\usepackage[utf8]{inputenc}
\usepackage{booktabs}
\usepackage{amssymb,amsmath,epsfig,graphics,psfrag,graphicx,color,subfigure}
\usepackage{algorithm} 
\usepackage{algpseudocode} 
\usepackage[title]{appendix}
\renewenvironment{proof}{\noindent{\it Proof.}}{\hfill$\square$}
\definecolor{lightblue}{rgb}{0.22,0.45,0.70}
\definecolor{lightgreen}{rgb}{0.22,0.50,0.25}
\usepackage[right = 2.5cm, left=2.5cm, top = 2.5cm, bottom =2.5cm]{geometry}
\usepackage[colorlinks=true,breaklinks=true,linkcolor=lightblue,citecolor=lightblue]{hyperref}

\definecolor{darkred}{rgb}{0.82,0.15,0.20}
\definecolor{darkblue}{rgb}{0.82,0.15,0.12}

\usepackage{mathtools}
\DeclarePairedDelimiterX{\triplenorm}[1]{\vert\Vert}{\Vert\vert}{#1}

\numberwithin{equation}{section}
\numberwithin{figure}{section}
\numberwithin{table}{section}

\newcommand{\buS}{\underline{\boldsymbol{S}}}

\newcommand\cero{{\boldsymbol{0}}}

\newcommand\bI{\mathbf{I}}

\newcommand\bV{\mathbf{V}}

\newcommand\bPi{\boldsymbol{\Pi}}

\newcommand\ff{\boldsymbol{f}}
\newcommand\bg{\boldsymbol{g}}

\newcommand\beps{\boldsymbol{\varepsilon}}
\newcommand\nn{{\boldsymbol{n}}}

\newcommand{\tP}{\tilde{P}}

\newcommand\buu{\underline{\boldsymbol{u}}}
\newcommand\buv{\underline{\boldsymbol{v}}}
\newcommand\buw{\underline{\boldsymbol{w}}}
\newcommand\buz{\underline{\boldsymbol{z}}}

\newcommand\bx{\boldsymbol{x}}
\newcommand\by{\boldsymbol{y}}
\newcommand\bu{\boldsymbol{u}}
\newcommand\bv{\boldsymbol{v}}

\newcommand\bz{\boldsymbol{z}}

\newcommand{\bYd}{\boldsymbol{Y}_d}
\newcommand{\bY}{\boldsymbol{Y}}

\newcommand\cT{\mathcal{T}}

\newcommand{\bbV}{\mathbb{V}}
\newcommand{\bbR}{\mathbb{R}}
\newcommand{\bbP}{\mathbb{P}}

\newcommand\bdiv{{\mathop{\mathbf{div}}\nolimits}}
\newcommand\vdiv{{\mathop{\mathrm{div}}\nolimits}}

\newcommand\bDelta{\boldsymbol{\Delta}}

\newcommand{\dx}{\,\mathrm{d}x}

\newcommand{\ds}{\,\mathrm{d}s}

\allowdisplaybreaks

\begin{document}
	\titlerunning{VEM for optimal control problems in poroelasticity equation}
	\title{Virtual element approximations for distributed or Neumann boundary optimal control problems governed by poroelasticity equation}
	\authorrunning{Gudi, Mora \& Verma}
	\author{Thirupathi Gudi, David Mora, and Nitesh Verma}
	\institute{
		Thirupathi Gudi \at 
		Department of Mathematics, Indian Institute of Science, Bangalore, India.\\
		\email{gudi@iisc.ac.in}.
		\and
		David Mora \at GIMNAP, Departamento de Matem\'atica, Universidad del B\'io-B\'io, Concepci\'on, Chile; and CI$^2$MA,
		Universidad de Concepci\' on, Chile.\\
		\email{dmora@ubiobio.cl}.
		\and
		Nitesh Verma  (corresponding author) \at
		GIMNAP, Departamento de Matem\'atica, Universidad del B\'io-B\'io, Concepci\'on, Chile.\\
		\email{nverma@ubiobio.cl}
	}
	\date{\today}
	\maketitle
	
	\begin{abstract}
        This paper investigate the conforming virtual element method for the optimal control problem governed by the linear poroelasticity equation. Both distributed control and Neumann boundary controls are considered with the optimize-then-discretize approach. We establish the well-posedness of the discrete state and adjoint problems in three-field formulation by deriving the first-order optimality condition. The optimal a priori error estimates are developed through a novel poroelastic projection for both control problems, and these estimates are uniform with respect to the relevant physical parameters of the model problem. Moreover, the numerical experiments are carried out using the primal dual active set strategy, and demonstrate the support for the derived theoretical results.
	\end{abstract}
	
	\keywords{Virtual element methods \and Optimal control problems \and Distributed, or Neumann boundary control \and Poroelasticity equations \and A priori error analysis.}

	\section{Introduction}

    The study of fluid flow through deformable porous media originates with the seminal work of Terzaghi in one dimension \cite{terzhagi25}, later generalized to three dimensions by Biot \cite{biot41}. The resulting system, now known as Biot’s equations of poroelasticity, provides a rigorous framework for modeling the coupled interaction between solid deformation and fluid transport. Poroelasticity has become indispensable in diverse scientific and engineering applications, including geoscience (e.g., swelling in coal seams, soil consolidation), biomedical science (e.g., tissue growth, glaucoma modeling), and industrial processes such as absorbent hygiene products \cite{collis17,nespoli25}. Over the decades, multiple formulations and analytical approaches have been proposed to study poroelasticity \cite{or_sinum16,yi17,phillips,verma24,verma26}, reflecting both its mathematical richness and practical importance. 

    Polygonal discretizations have gained increasing attention in recent years due to their ability to handle complex geometries, irregular meshes, and hanging-node refinements. Among these, the Virtual Element Method (VEM) \cite{ahmad13,beirao-b13,beirao-div17} has emerged as a powerful extension of the finite element method to general polygonal meshes. VEM offers several advantages: it provides stability in mixed formulations, avoids locking in nearly incompressible regimes, and allows for flexible mesh generation in complex domains. These features make VEM particularly well-suited for poroelasticity, where mixed formulations and geometric complexity are inherent.
    
    Optimal control problems governed by poroelasticity have recently emerged as a significant research direction. Such problems are motivated by the need to regulate displacement, pressure, or flux in applications ranging from medical science to subsurface engineering. Bociu et al. \cite{bociu22} established well-posedness results for poroelasticity control problems, with particular emphasis on biomedical applications such as glaucoma \cite{bociu22,causin14}. Related studies include land subsidence modeling \cite{xu12}, further underscoring the relevance of poroelastic control in geomechanics. More broadly, optimal control of elliptic and parabolic PDEs has been extensively studied in \cite{troltzsch10,manzoni21}, providing a theoretical foundation for poroelasticity control. One of the ways the discrete optimality system is implemented via the Primal–Dual Active Set (PDAS) strategy \cite{troltzsch10}. Within the PDAS framework, the active set is updated iteratively by checking the control constraints against the projection formula, while the state and adjoint equations are solved on the inactive set. This iterative procedure guarantees finite termination and efficient enforcement of admissible bounds. Recent numerical work has focused on finite element approximations for distributed control in unsteady quasi-static poroelasticity \cite{harpal26}, where the control acts as a source or sink of fluid. However, these studies have been limited to distributed control, and no prior work has addressed Neumann boundary control or the use of VEMs in this context. Moreover, the present work considers control applied directly to the displacement variable in contrast and hence referring to different class of optimization problems. This model can also be extended to incorporate interaction with fluid such as Biot-Brinkman \cite{ricardo24,verma26}, Biot-Stokes equations \cite{ricardo22} among others.
    
    In this paper, we present the first comprehensive study of conforming VE approximations for optimal control problems governed by poroelasticity equations with distributed and Neumann boundary controls. We adopt the three-field formulation involving displacement, pressure, and total pressure, and follow the optimize-then-discretize paradigm. The lowest-order conforming VE spaces are employed for the state and adjoint problems, while the control variable is discretized using piecewise constants. The discrete optimality system is implemented via the PDAS strategy, which ensures efficient enforcement of control constraints and finite termination. To establish rigorous convergence results, we introduce novel poroelastic projection operators that enable unified error analysis for both distributed and Neumann boundary control problems.

\noindent The main contributions of this work can be summarized as follows:
\begin{itemize}
    \item {\it Novel formulation}: We provide the first study of distributed and Neumann boundary control problems for poroelasticity in the three-field setting, including derivation of the state, adjoint, and optimality systems.
    
    \item {\it Well-posedness analysis}: We establish the well-posedness of both the continuous and discrete formulations, showing their equivalence with energy minimization principles.
    
    \item {\it Unified error estimates}: We develop projection-based rigorous error estimates for both control types, introducing novel poroelastic projection operators for the state and adjoint problems. These estimates are uniform with respect to the physical parameters.
    
    \item {\it Low-order VEM discretization}: We employ lowest‑order conforming VEM spaces, specifically designed to accommodate the minimal regularity of the control variable, thereby ensuring robustness of the scheme and efficiency of the implementation.
    
    \item {\it Algorithmic implementation}: We integrate the PDAS strategy into the VEM framework, demonstrating its effectiveness through numerical experiments for both distributed and Neumann boundary control.
\end{itemize}

The remainder of the paper is organized as follows. Section~\ref{sec:model} introduces the governing equations, weak formulation, and well-posedness of the continuous optimal control problem. Section~\ref{sec:abstract} develops the abstract error analysis using projection operators. Section~\ref{sec:VEapprox} presents the conforming VEM discretization and establishes error estimates in the energy norm. Section~\ref{sec:numer} describes the implementation of the PDAS algorithm and provides numerical experiments for both distributed and Neumann boundary control problems.

	\section{Governing equations}\label{sec:model}
	
%

We focus here on the quasi-static and steady state poroelasticity equation for simplicity, and introduce a variable known as the total pressure \cite{or_sinum16}. The governing equation in three-field form
for a given body force $\ff$ and source/sink $\ell$ in domain $\Omega$ reads, in terms of solid displacement $\bu$ of the porous medium, fluid pressure $p$ and total pressure $\psi$, as: 
	\begin{subequations}\label{eq:strong-poro}
		\begin{align}
			-\bdiv(2{\mu} \beps(\bu)  -\psi \bI)  &= \ff, \label{eq:momentum}\\
			\left(c_0 + \frac{\alpha^2}{\lambda} \right) p - \frac{\alpha}{\lambda} \psi - \frac{1}{\eta} \vdiv(\kappa  \nabla p ) &= \ell, \label{eq:massconservation}\\
			 -\vdiv \bu + \frac{\alpha}{\lambda} p - \frac{1}{\lambda} \psi &=0, \label{eq:totalpres} 
		\end{align}
	\end{subequations}
	with the boundary conditions
	\begin{subequations}\label{eq:bc}
		\begin{align}
			\bu = \cero \quad \text{and} \quad \frac{\kappa}{\eta} \nabla p \cdot\nn^{\partial \Omega} = 0  & &\text{on $\Gamma_D$}, \label{bc:Dirichlet}\\
			(2{\mu} \beps(\bu)  -\psi \bI) \nn^{\partial \Omega} = \bg \quad  \text{and}\quad p=0
			& &\text{on $\Gamma_N$}, \label{bc:Neumann}
		\end{align}
	\end{subequations}
	where the boundary $\Gamma= \partial \Omega$ is split as $\Gamma:= \Gamma_D \cup\Gamma_N$, $\Gamma_D \cap\Gamma_N = \emptyset$ ($\Gamma_D$: Dirichlet boundary part and $\Gamma_N$: Neumann boundary part for displacement), $\nn^{\partial \Omega}$ is the unit normal on boundary $\partial \Omega$, and $\kappa(x)$ is the hydraulic conductivity of the porous medium (assuming $0<\kappa_1 \le \kappa(x) \le \kappa_2 < \infty$), 
    $\eta$ is the constant viscosity of the interstitial fluid, $c_0$ is the constrained specific storage coefficient, 
    $\alpha$ is the Biot-Willis consolidation parameter, 
    and $\mu$ and $\lambda$ are the shear and dilation moduli associated with the constitutive law of the solid structure.

	Here the distributed control stated as the problem \eqref{eq:strong-poro} with $\ff+ \by$ (where $\by \in \bY:= [L^2(\Omega)]^2$ ) instead of $\ff$ in equation \eqref{eq:momentum} and boundary conditions \eqref{eq:bc}; Whereas the Neumann control problem stated as the problem \eqref{eq:strong-poro} and boundary condition \eqref{eq:bc} with $\bg +\by$ (where $\by \in \bY:= [L^2(\Gamma_N)]^2$ ) instead of $\bg$ in equation $\eqref{bc:Neumann}$. Note that we will consider homogeneous boundary conditions ($\bg=\cero$) throughout the analysis for simplification.
	
	Introducing the Sobolev space
	$H^1_*(\Omega):= \{ v \in H^1(\Omega): v =0 \text{ a.e. on } \Gamma_*\},$ for $*=\{D,N\}$ and define the spaces
	\begin{align*}
		&\bV := [H^1_D(\Omega)]^2, \quad Q := H^1_N(\Omega), \quad Z:= L^2(\Omega), \\
		&\text{and} \quad \bYd:= \{ \by \in \bY: \by_a \le \by(x) \le \by_b, \text{ for a.e. } x \in \Omega \text{, or } x \in\Gamma_N\}, \,\,  \by_a, \by_b \in \bbR^2, \,\by_a< \by_b,
	\end{align*}
	where the bounds in $\bYd$ are in the componentwise sense.

    \noindent Throughout this paper, we adopt the standard notation of norms as, 
    \begin{gather*}
        \|v\|_{0, \Omega}^2 := \int_{\Omega} |v|^2 \dx \quad \forall v \in L^2(\Omega), \qquad \| v\|_{1, \Omega}^2:= \|v\|_{0, \Omega}^2 + \| \nabla v\|_{0, \Omega}^2 \quad \forall v \in H^1(\Omega), \\
        \quad \text{ and } \quad
        \| \by\|_{\bY}^2:= \langle \by, \by \rangle_{\bY} \quad \forall \by \in \bY.
    \end{gather*}
    With a slight abuse of notation, the same symbol will be used for norms in vector-valued spaces $[L^2(\Omega)]^2$ and  $[H^1(\Omega)]^2$.
    
	
	The weak formulation of  \eqref{eq:strong-poro} for a given distributed, or Neumann control $\by \in \bYd$ 
	 can be stated as: find $\buu:=(\bu, p, \psi) \in \bbV:= \bV \times Q \times Z$
	 such that
	\begin{subequations} \label{eq:weak-control}
		\begin{align}
			2 \mu \int_{\Omega} \beps(\bu):\beps(\bv) - \int_{\Omega}\psi \, \vdiv \bv & = \int_{\Omega} \ff \cdot \bv 
            + \langle B \bv, \by \rangle_{\bY} 
			 & \quad \forall \bv \in \bV, \\
			\biggl(c_0 +\frac{\alpha^2}{{\lambda}}\biggr) \int_{\Omega} p \,q + \frac{1}{\eta} \int_{\Omega} \kappa \nabla p \cdot \nabla q - \frac{\alpha}{{\lambda}} \int_{\Omega} \psi\, q &= \int_{\Omega}  \ell \, q & \quad \forall q \in Q, \\
			- \int_{\Omega} \phi\, \vdiv \bu  + \frac{\alpha}{\lambda} \int_{\Omega} p \,\phi - \frac{1}{\lambda} \int_{\Omega} \psi \, \phi & = 0 & \quad \forall \phi \in Z,
		\end{align}
	\end{subequations}
	where $B: \bV \to \bY_d$ is a map so that $\langle B \bv, \by \rangle_{\bY}$ is well-defined for both distributed and Neumann controls.\\
		In addition, denoting the bilinear forms and linear functionals as,
	\begin{gather*}
		a_1(\bu, \bv):=2 \mu \int_{\Omega} \beps(\bu):\beps(\bv) \dx, \qquad 
		b_1(\bv, \phi):= - \int_{\Omega} \phi\, \vdiv \bv \dx, \qquad  b_2(q, \phi):= \frac{\alpha}{{\lambda}} \int_{\Omega} \phi \, q \dx, \\
		a_2(p, q):= \int_{\Omega} p \,q \dx + \frac{1}{\eta} \int_{\Omega} \kappa \nabla p \cdot \nabla q \dx, \qquad a_3(\psi, \phi):=  \frac{1}{{\lambda}} \int_{\Omega} \psi \,\phi \dx, \\
		F(\bv):= \int_{\Omega} \ff \cdot  \bv \dx,
        \quad \text{and} \quad L(q):= \int_{\Omega} \ell \, q \dx,
	\end{gather*}
	then variational formulation for a given control $\by \in \bYd$ is rewritten as,
	\begin{subequations} \label{eq:weak-control-state}
		\begin{align}
		a_1(\bu, \bv) + b_1(\bv, \psi) &= F(\bv)+         \langle B \bv, \by \rangle_{\bY }        & \forall \bv \in \bV, \\
		a_2(p, q) - b_2(q, \psi) & = L(q) & \forall q \in Q, \\
		b_1(\bu, \phi) + b_2(p, \phi) - a_3(\psi, \phi) & = 0  & \forall \phi \in Z.
		\end{align}
	\end{subequations}
%

	\noindent We have the following properties of the bilinear forms:
	\begin{itemize}
		\item the bilinear form $a_{1}( \cdot, \cdot): \bV \times \bV \to \bbR$ is bounded and coercive,
			\begin{align*}
				a_{1}( \bu, \bv) & \le 2 \mu \, \| \beps(\bu)\|_{0, \Omega} \| \beps(\bv) \|_{0, \Omega} \le 2 \mu \, C_{k,2} \| \bu \|_{1, \Omega} \| \bv \|_{1, \Omega}, \\
				a_{1}( \bv, \bv) & \ge 2 \mu \, \| \beps(\bv)\|_{0, \Omega}^2 \ge 2 \mu \, C_{k,1} \| \bv \|_{1, \Omega}^2;
			\end{align*}
			
		\item the bilinear form $a_{2}( \cdot, \cdot): Q \times Q \to \bbR$ is bounded and coercive,
		\begin{align*}
			a_{2}( p, q) & \le \left( c_0 + \frac{\alpha^2}{\lambda} \right) \|p\|_{0, \Omega} \|q \|_{0, \Omega} + \frac{\kappa_2}{\eta}  \|\nabla p\|_{0, \Omega} \| \nabla q \|_{0, \Omega} \\
			& \le \max \left\{ \left( c_0 + \frac{\alpha^2}{\lambda} \right),  \frac{\kappa_2}{\eta} \right\} \|p \|_{1, \Omega}\|q \|_{1, \Omega}, \\
			a_{2}( q, q) & \ge \left( c_0 + \frac{\alpha^2}{\lambda} \right) \|q \|_{0, \Omega}^2 + \frac{\kappa_1}{\eta} \| \nabla q \|_{0, \Omega}^2 \ge \min \left\{\left( c_0 + \frac{\alpha^2}{\lambda} \right),  \frac{\kappa_1}{\eta} \right\} \|q \|_{1, \Omega}^2;
		\end{align*}
		
		\item the bilinear form $b_1(\cdot, \cdot)$ satisfies inf-sup condition on $\bV \times Z$, there exists $\beta>0$ such that
		\begin{align*}
			\sup_{(0 \neq) \bv \in \bV} \frac{b_1(\bv, \phi)}{\| \bv \|_{1, \Omega}} \ge \beta \| \phi \|_{0, \Omega} \qquad \forall \phi \in Z;
		\end{align*}
		\item the bilinear forms $b_2(\cdot, \cdot)$ and $a_3(\cdot, \cdot)$ are bounded,
		\begin{align*}
				b_2(q, \phi)&\le \frac{\alpha}{{\lambda}} \|\phi\|_{0, \Omega} \| q \|_{0, \Omega}, \qquad
			a_3(\psi, \phi) \le  \frac{1}{{\lambda}} \|\psi\|_{0, \Omega} \|\phi \|_{0, \Omega};
		\end{align*}
		\item the linear functionals $F(\cdot): \bV \to \bbR$ and $L(\cdot): Q \to \bbR$ are bounded,
		\begin{align*}
			F(\bv) + \langle B \bv, \by \rangle & \le \|\ff \|_{0, \Omega} \|\bv\|_{0, \Omega} 
            + \| B \bv\|_{\bY} \| \by \|_{\bY} 
            \le \big(  \|\ff \|_{0, \Omega} 
            + \| \by \|_{\bY}  \big) \|\bv\|_{1, \Omega},\\
			L(q)& \le \| \ell \|_{0, \Omega}\| q \|_{0, \Omega}.
		\end{align*}
\end{itemize}

	\noindent Building from the above result, we state the following theorem, adapted from \cite{or_sinum16,burger_acom21}, with a proof sketch included for completeness, which establishes the continuity of the control-to-state map.
	\begin{theorem} \label{thm:Gwelldefined}
	Assuming that the $B: \bV \to \bYd$ is a continuous map then the solution map denoted by $\buS: \bYd\to \bbV $ from control variable $\by \in \bYd$ to the state variable $\buu_y \in \bbV$ given by \eqref{eq:weak-control}, is a well-defined and continuous map.
	\end{theorem}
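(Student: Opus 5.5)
The plan is to treat \eqref{eq:weak-control-state} as a perturbed saddle-point problem. The pair $(\bu,p)\in\bV\times Q$ plays the role of the primal variable and $\psi\in Z$ the role of the multiplier. We then invoke the abstract Babu\v{s}ka--Brezzi theory with a penalty term (as in \cite{or_sinum16}) to get existence, uniqueness and an a priori bound that is linear in the data. Concretely, we define on $\bV\times Q$ the block-diagonal form
\begin{align*}
\mathcal{A}((\bu,p),(\bv,q)) := a_1(\bu,\bv) + a_2(p,q),
\end{align*}
and the coupling form $\mathcal{B}((\bv,q),\phi) := b_1(\bv,\phi) + b_2(q,\phi)$. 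The system then reads $\mathcal{A} + \mathcal{B}^T$ in the first two equations and $\mathcal{B} - a_3$ in the third, with right-hand side $(F(\cdot)+\langle B\cdot,\by\rangle_{\bY},\, L,\, 0)$. A technical point is that the second equation carries $-b_2(q,\psi)$ while the third carries $+b_2(p,\phi)$. We fix this by testing the second equation with $-q$ (equivalently, replacing $p$ by $-p$ in the unknowns), which makes the structure symmetric and of the standard form.

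The key steps are as follows.
\begin{enumerate}
\item By the listed properties, $\mathcal{A}$ is bounded and coercive on $\bV\times Q$, with constant $\min\{2\mu C_{k,1}, c_0+\alpha^2/\lambda, \kappa_1/\eta\}$.
\item $\mathcal{B}$ is bounded, and it satisfies the inf-sup condition on $(\bV\times Q)\times Z$. To see this, restrict the supremum to pairs $(\bv,0)$ and use the inf-sup condition for $b_1$:
\begin{align*}
\sup_{(\bv,q)\neq 0}\frac{\mathcal{B}((\bv,q),\phi)}{\|(\bv,q)\|}\ \ge\ \sup_{\bv\neq 0}\frac{b_1(\bv,\phi)}{\|\bv\|_{1,\Omega}}\ \ge\ \beta\|\phi\|_{0,\Omega}.
\end{align*}
\item $a_3$ is symmetric, bounded and positive semidefinite on $Z$.
\item The right-hand side is bounded, by the estimate on $F(\bv)+\langle B\bv,\by\rangle$ together with the continuity of $B$.
\end{enumerate}
Then the perturbed saddle-point theorem (e.g.\ Braess, or Theorem in \cite{or_sinum16}) gives a unique $\buu_y\in\bbV$ satisfying
\begin{align*}
\|\bu\|_{1,\Omega}+\|p\|_{1,\Omega}+\|\psi\|_{0,\Omega}\le C\big(\|\ff\|_{0,\Omega}+\|\by\|_{\bY}+\|\ell\|_{0,\Omega}\big),
\end{align*}
with $C$ independent of $\lambda$, since the penalty $a_3$ enters with a good sign. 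This proves that $\buS$ is well-defined.

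For continuity, note that the map $\by\mapsto\buu_y$ is affine. For $\by_1,\by_2\in\bYd$, the difference $\buu_{y_1}-\buu_{y_2}$ solves the same system with $\ff=\cero$, $\ell=0$ and control $\by_1-\by_2$. Applying the a priori bound to this difference yields
\begin{align*}
\|\buu_{y_1}-\buu_{y_2}\|_{\bbV}\le C\|B\|\,\|\by_1-\by_2\|_{\bY},
\end{align*}
i.e.\ Lipschitz continuity. Here we use that $|\langle B\bv,\by\rangle|\le\|B\|\|\bv\|_{1,\Omega}\|\by\|_{\bY}$, which holds for the identity embedding in the distributed case and for the trace operator in the Neumann case.

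The main obstacle is step (ii) combined with the sign structure. We must verify that the off-diagonal coupling through $b_2$ fits the abstract framework, namely that after the sign flip the $(p,\psi)$ coupling becomes part of $\mathcal{B}$ rather than a nonsymmetric perturbation. If the abstract theorem is inconvenient here, the fallback is a direct argument. First, test with $(\bu,p,-\psi)$ (appropriately signed) to get coercivity in $\bu$ and $p$ plus $a_3(\psi,\psi)\ge 0$. Then recover $\|\psi\|_{0,\Omega}$ from the first equation via the inf-sup of $b_1$, which gives
\begin{align*}
\beta\|\psi\|_{0,\Omega}\le 2\mu C_{k,2}\|\bu\|_{1,\Omega}+\|\ff\|_{0,\Omega}+\|B\|\|\by\|_{\bY}.
\end{align*}
Existence then follows, since in finite dimensions (or via Lax--Milgram on an equivalent form) uniqueness implies existence.
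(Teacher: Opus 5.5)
There is a genuine gap in your main route, at the sign fix. With unknowns $(\bu,p,\psi)$ and test functions $(\bv,q,\phi)$, the operator has the block form
\[
\begin{bmatrix} A_1 & 0 & B_1\\ 0 & A_2 & -B_2\\ B_1^T & B_2^T & -A_3 \end{bmatrix}
\]
(this is the matrix $A$ in \eqref{compute:localmat}). Testing the $q$-equation with $-q$, or substituting $p\mapsto -p$, does make the $(2,3)$ and $(3,2)$ blocks transposes of each other. However, it also turns the $(2,2)$ block into $-A_2$. Doing both restores $+A_2$, and the mismatch comes back with it.

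More generally, any $\pm1$ rescaling of rows and columns that preserves the signs of the three diagonal blocks is a symmetric rescaling. Such a rescaling cannot change the relative sign of $-B_2$ and $B_2^T$. So with $(\bu,p)$ as the primal variable you get one of two things:
\begin{itemize}
\item a nonsymmetric coupling, where the first two equations see $b_1(\bv,\psi)-b_2(q,\psi)$ and the third sees $b_1(\bu,\phi)+b_2(p,\phi)$;
\item a symmetric system whose primal block $a_1\oplus(-a_2)$ is indefinite, and not coercive on $\ker\mathcal{B}$ either.
\end{itemize}
Your steps 1--3 check hypotheses for a system that is not the one at hand. Neither existence nor the $\lambda$-robust bound follows from the perturbed saddle-point theorem as you set it up.

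A symmetric reformulation does exist, but only if you treat $(p,\psi)$ jointly as the multiplier, with $a_2(p,p)-2b_2(p,\psi)+a_3(\psi,\psi)$ as the penalty. Then $\mathcal{B}$ controls only $\psi$, and you need a variant of the theory in which the penalty supplies the control of $p$. This is the ``lack of symmetry'' the paper refers to.

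Your fallback is the right argument, and it is essentially the paper's. The paper proves an energy estimate, recovers $\psi$ from the inf-sup condition of $b_1$, gets uniqueness from zero data, and gives a separate existence argument. Two points in your version need repair.

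First, testing with $(\bu,p,-\psi)$ does not make the $b_2$ terms cancel; they add. The identity is
\[
a_1(\bu,\bu)+a_2(p,p)-2b_2(p,\psi)+a_3(\psi,\psi)=a_1(\bu,\bu)+c_0\|p\|_{0,\Omega}^2+\frac{1}{\eta}\|\kappa^{1/2}\nabla p\|_{0,\Omega}^2+\frac{1}{\lambda}\|\alpha p-\psi\|_{0,\Omega}^2 .
\]
So the control of $p$ comes only from $c_0$ and $\kappa_1/\eta$, since the $\alpha^2/\lambda$ part is absorbed into the square. This completed square, not ``$a_3\ge 0$'', is what makes the bound uniform in $\lambda$.

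Second, ``uniqueness implies existence'' is not available here. The problem is infinite-dimensional and nonsymmetric, and an injective bounded operator need not be onto. The paper invokes the Fredholm alternative. Two self-contained alternatives:
\begin{itemize}
\item Use Banach--Ne\v{c}as--Babu\v{s}ka. Run your estimate with general right-hand sides in $\bV'\times Q'\times Z'$ to get the inf-sup condition of the global form. The transposed system is exactly the adjoint system \eqref{eq:weak-control-adjoint}. Testing it with $(\bz,r,-\zeta)$ gives the same identity with $\frac{1}{\lambda}\|\alpha r+\zeta\|_{0,\Omega}^2$, hence injectivity of the adjoint operator.
\item Since $Z=L^2(\Omega)$, eliminate $\psi=\alpha p-\lambda\vdiv\bu$. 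The reduced form on $\bV\times Q$ is coercive, because the couplings $-\alpha(p,\vdiv\bv)+\alpha(\vdiv\bu,q)$ cancel on the diagonal. Lax--Milgram then gives existence; its continuity constant depends on $\lambda$, which is harmless for existence.
\end{itemize}
Your Lipschitz argument for the continuity of $\buS$ is fine once well-posedness is in place.
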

	\begin{proof}
		For given control $\by \in \bYd$, we want to show that the problem \eqref{eq:weak-control} is well-posed, or
        the map $\buS: \by (\in \bYd) \mapsto \buu (\in \bbV)$ is well-defined.
		
		The continuous dependence of the solution $\buu \in \bbV$ on the given load functions $\ff, \ell$ and $\by$ using the properties of bilinear forms can be given as,
		\begin{equation} \label{eq:cts_dependence}
			\begin{aligned}
			2 \mu \, C_{k,1} \| \bu \|_{1, \Omega}^2 &+  c_0 \| p \|_{0, \Omega}^2 + \frac{\kappa_1}{\eta} \| \nabla p \|_{0, \Omega}^2 + \| \psi\|_{0, \Omega}^2 \\
			& \le  2 \left( 1+ \frac{4}{C_{k,1}^2} \left( 1+ \frac{1}{2 \mu} \right)\right) \big( \|\ff \|_{0, \Omega}^2
            + \| \by \|_{\bY}^2 \big)+  \frac{ 4\eta}{\kappa_1} \left( 1+ 2 \mu \right)\| \ell \|_{0, \Omega}^2.
		\end{aligned}
	\end{equation}
		In addition, uniqueness of the solution can be implied by taking $\ff=\cero, \by = \cero$ and $\ell=0$ in \eqref{eq:cts_dependence}. We can obtain the existence of the solution, referring to \cite{or_sinum16}, by approaching to the Fredholm alternative theory for operators due to lack of symmetry in the formulation.
	\end{proof}

Therefore, the distributed, or Neumann boundary optimal control problem is stated as 
\begin{equation}\label{eq:optimal-abstract}
	J(\buu, \by) = \min_{(\buw, \bx) \in \bbV \times \bYd} J(\buw, \bx),
\end{equation}
subject to the constraints \eqref{eq:weak-control-state}, and the quadratic cost functional $J: \bbV \times \bYd \to \bbR$ is given as,
\begin{align*}
	J(\buw, \bx)= \frac{1}{2} \| \buw - \buu_d \|_{0, \Omega}^2 + \frac{\gamma}{2} \| \bx \|_{\bY}^2, \qquad \buw \in \bbV, \, \bx \in \bYd,
\end{align*}
where $\gamma>0$ is the regularization parameter, $\buu_d:=(\bu_d, p_d, \psi_d) $ is the given desired functions and $\bYd$ is the admissible space for control.


We decompose the solution of \eqref{eq:weak-control-state} as $\buu:= \buu_{0}+ \buu_y$ with $\buu_0:=(\bu_0, p_0, \psi_0) \in \bbV$ satisfying the poroelasticity equation \eqref{eq:weak-control} with $\by=\cero$; and  $\buu_y:=(\bu_y, p_y, \psi_y) \in \bbV$ solution of \eqref{eq:weak-control} with $\ff=\cero$.

Now, we define the solution operator $\buS:=(S^u, S^p, S^\psi):\bYd \to \bbV $ of the problem \eqref{eq:weak-control-state} such that 
$$\buS (\bx) = (S^u \bx, S^p \bx, S^{\psi} \bx) :=\buu_x=(\bu_x, p_x, \psi_x),$$ then $\buw=\buu_0 + \buS (\bx)$ from \eqref{eq:weak-control-state} and
we can rewrite the optimal control problem \eqref{eq:optimal-abstract} in terms of only control variable as,
\begin{align} \label{eq:optimalproblem}
	j(\by)= \min_{\bx \in \bYd} j(\bx) := \min_{\bx \in \bYd} \Big(\frac{1}{2} \| \buu_0 + \buS (\bx) - \buu_d \|_{0, \Omega}^2
	+ \frac{\gamma}{2} \| \bx \|_{\bY}^2 \Big).
\end{align}


Introduce an adjoint problem for the state problem \eqref{eq:optimal-abstract} (the choice of loads will be explained later), which is stated as: seek $(\bz, r, \zeta) \in \bbV$ such that
		\begin{subequations} \label{eq:weak-control-adjoint}
			\begin{alignat}{3}
				a_1(\bv, \bz)   +  b_1(\bv, \zeta)     &=& (S^u \by - \bu_d, \bv)_{0, \Omega}  &\quad&\forall \bv \in \bV,\\
				  a_2(r,q)   +    b_2(q,\zeta)  &=&( S^p \by - p_d, q)_{0, \Omega} &\quad&\forall q \in Q,  \\
				b_1(\bz, \phi)    -  b_2(r,\phi)- a_3(\zeta, \phi)  &=& (S^\psi \by- \psi_d, \phi)_{0, \Omega}   &\quad&\forall \phi \in Z.
		\end{alignat}\end{subequations}
    The adjoint solution operator is transpose of the state solution operator then the well-posedness of the adjoint problem \eqref{eq:weak-control-adjoint} can be proved in similar manner as the state problem \eqref{eq:weak-control-state} through the continuity of the functionals $(S^u \by - \bu_d, \bv)_{0, \Omega} , \, ( S^p \by - p_d, q)_{0, \Omega},\, (S^\psi \by- \psi_d, \phi)_{0, \Omega}$ (for more details, refer to \cite{or_sinum16,bociu22}).

\smallskip        
    Our next step is to prove the well-posedness of the continuous problem \eqref{eq:optimal-abstract} and to derive the corresponding optimality condition.
	\begin{theorem}[Existence and uniqueness of an optimal solution] \label{thm:opt-equiv}
    Assume that $\bYd (\subseteq \bY)$ is a non-empty, closed, bounded, and convex set. 
    Then the optimization problem \eqref{eq:optimalproblem} has an optimal solution $\by \in \bYd$.
    Namely, the optimal control problem \eqref{eq:optimal-abstract} has a unique solution
		$(\buu, \by):= (\buS (\by), \by) \in \bbV \times \bYd$ for $\gamma>0$.
		  Furthermore, the first-order optimality condition $j'(\by)(\bx - \by)\ge 0 $ for all $ \bx \in \bYd$ is equivalent to,
		\begin{align}\label{eq:optimality}
			\langle B \bz+ \gamma \by, \bx - \by \rangle_{\bY} \ge 0 \quad \forall \bx \in \bYd,
		\end{align}
		where $\bz \in \bV$ is the adjoint state variable, that is, solution of the problem \eqref{eq:weak-control-adjoint}.
	\end{theorem}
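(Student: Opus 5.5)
We argue through the reduced problem \eqref{eq:optimalproblem}, treating $j$ as a functional on the closed convex set $\bYd\subseteq\bY$. By Theorem~\ref{thm:Gwelldefined}, $\buS$ is linear and continuous from $\bY$ to $\bbV$: it is the solution map of \eqref{eq:weak-control-state} with $\ff=\cero$, $\ell=0$, and the bound \eqref{eq:cts_dependence} gives its continuity. Hence $\bx\mapsto \buu_0+\buS(\bx)-\buu_d$ is affine and continuous into $[L^2(\Omega)]^2\times L^2(\Omega)\times L^2(\Omega)$, using $H^1\hookrightarrow L^2$. It follows that $j$ is continuous and convex. Since $\gamma>0$, the term $\frac{\gamma}{2}\|\bx\|_{\bY}^2$ makes $j$ strictly convex and coercive.

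\textbf{Existence and uniqueness.} Take a minimizing sequence $\{\bx_n\}\subset\bYd$. Because $\bYd$ is bounded, a subsequence converges weakly in the Hilbert space $\bY$ to some $\by$. Since $\bYd$ is closed and convex, it is weakly closed (Mazur), so $\by\in\bYd$. A convex continuous functional is weakly lower semicontinuous, so $j(\by)\le\liminf j(\bx_n)=\inf_{\bYd} j$, and $\by$ is a minimizer. If $\by_1,\by_2$ were two distinct minimizers, strict convexity would give $j(\frac{\by_1+\by_2}{2})<j(\by_1)$, a contradiction. Finally, $(\buu,\by)=(\buu_0+\buS(\by),\by)$ solves \eqref{eq:optimal-abstract}, since every feasible pair has the form $(\buu_0+\buS\bx,\bx)$. (The statement's $\buS(\by)$ is understood together with the fixed part $\buu_0$.)

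\textbf{Optimality condition.} Because $j$ is convex and Gâteaux differentiable, $\by$ is optimal iff $j'(\by)(\bx-\by)\ge0$ for all $\bx\in\bYd$. Differentiating the quadratic functional gives
\begin{equation*}
j'(\by)\bx=\big(\buu-\buu_d,\buS\bx\big)_{0,\Omega}+\gamma\langle\by,\bx\rangle_{\bY},
\end{equation*}
where $\buu=\buu_0+\buS\by$. The remaining task is to rewrite the first term as $\langle B\bz,\bx\rangle_{\bY}$. We expect this adjoint identification to be the main step, because it requires the sign structure of \eqref{eq:weak-control-adjoint} to be exactly the transpose of \eqref{eq:weak-control-state}. (The adjoint right-hand sides are to be read with $\buu=\buu_0+\buS\by$.)

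To carry it out, let $\buu_x=(\bu_x,p_x,\psi_x)=\buS\bx$, which solves \eqref{eq:weak-control-state} with data $\langle B\bv,\bx\rangle_{\bY}$ only. Test the adjoint equations with $\bv=\bu_x$, $q=p_x$, $\phi=-\psi_x$ and add them. The right-hand side becomes
\begin{equation*}
(\bu-\bu_d,\bu_x)+(p-p_d,p_x)-(\psi-\psi_d,\psi_x).
\end{equation*}
The left-hand side becomes
\begin{equation*}
a_1(\bu_x,\bz)+b_1(\bu_x,\zeta)+a_2(r,p_x)+b_2(p_x,\zeta)-b_1(\bz,\psi_x)+b_2(r,\psi_x)+a_3(\zeta,\psi_x).
\end{equation*}
Next test the state equations for $\buu_x$ with $\bv=\bz$, $q=r$, $\phi=\zeta$ and compare. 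The state system gives $a_1(\bz,\bu_x)+b_1(\bz,\psi_x)=\langle B\bz,\bx\rangle$, $a_2(p_x,r)-b_2(r,\psi_x)=0$, and $b_1(\bu_x,\zeta)+b_2(p_x,\zeta)-a_3(\psi_x,\zeta)=0$. By symmetry of $a_1,a_2,a_3$, the two sums of bilinear forms match up to the signs attached to the $\psi$/$\zeta$ couplings.

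If the signs do not cancel directly, we choose the test function in the third equation as $\phi=\pm\psi_x$ to make them cancel. This is the delicate bookkeeping step, and it may require the convention that the third component of the adjoint data carries the opposite sign. The outcome should be $(\buu-\buu_d,\buS\bx)_{0,\Omega}=\langle B\bz,\bx\rangle_{\bY}$. Substituting into $j'(\by)(\bx-\by)\ge0$ then gives \eqref{eq:optimality}, and conversely.
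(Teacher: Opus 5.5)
\paragraph{Verdict: genuine gap in the adjoint identification step.}
Your existence and uniqueness argument is sound. The direct method you spell out uses weak compactness of the bounded set $\bYd$, weak closedness via Mazur, weak lower semicontinuity of the convex continuous $j$, and strict convexity from $\gamma>0$. This is what underlies Tr\"oltzsch's Theorem~2.14, which the paper simply cites. Your reading of the adjoint data with $\buu=\buu_0+\buS\by$ is, if anything, more careful than the paper's.

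The gap is in the step you yourself call the main one, the identity $(\buu-\buu_d,\buS\bx)_{0,\Omega}=\langle B\bz,\bx\rangle_{\bY}$. As written, you test the third adjoint equation with $\phi=-\psi_x$, and this fails on both sides:
\begin{itemize}
\item On the left you get $-b_1(\bz,\psi_x)+b_2(r,\psi_x)+a_3(\zeta,\psi_x)$. The state equations for $\buS\bx$ tested with $(\bz,r,\zeta)$ instead produce $+b_1(\bz,\psi_x)-b_2(r,\psi_x)-a_3(\psi_x,\zeta)$.
\item On the right you get $-(\psi-\psi_d,\psi_x)_{0,\Omega}$. The derivative $j'(\by)\bx$ instead contains $+(\psi-\psi_d,\psi_x)_{0,\Omega}$.
\end{itemize}
Your suggested remedy, flipping the sign of the third adjoint datum, repairs only the right-hand side. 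It leaves the left-hand discrepancy $2\big(b_1(\bz,\psi_x)-b_2(r,\psi_x)-a_3(\psi_x,\zeta)\big)$, which does not vanish in general. Leaving the sign as ``$\pm$'' and the result as what it ``should be'' means the adjoint identification, and with it \eqref{eq:optimality}, is not established.

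\paragraph{The fix.}
No change of convention is needed: test with $\phi=+\psi_x$, as the paper does ($\bv=S^u\bx$, $q=S^p\bx$, $\phi=S^\psi\bx$). The summed adjoint left-hand side is then
\begin{equation*}
a_1(\bu_x,\bz)+b_1(\bu_x,\zeta)+a_2(r,p_x)+b_2(p_x,\zeta)+b_1(\bz,\psi_x)-b_2(r,\psi_x)-a_3(\zeta,\psi_x).
\end{equation*}
Using only the symmetry of $a_1,a_2,a_3$, this coincides term by term with the sum of the state equations for $\buS\bx$ tested with $\bv=\bz$, $q=r$, $\phi=\zeta$. That sum equals $\langle B\bz,\bx\rangle_{\bY}$. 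The summed right-hand side is $(\buu-\buu_d,\buS\bx)_{0,\Omega}$, with all three data terms carrying a plus sign.

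The structural reason is that \eqref{eq:weak-control-adjoint} is exactly the transpose of \eqref{eq:weak-control-state}. Let $\mathcal{A}(\buw,\buv)$ denote the sum of the three state forms with trial triple $\buw$ and test triple $\buv$. Then the summed adjoint left-hand side tested with $\buv$ equals $\mathcal{A}(\buv,\buz)$. Inserting $\buv=\buS\bx$ into the adjoint and testing the state with $\buz$ therefore yield the same number, $\mathcal{A}(\buS\bx,\buz)$.

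With this one-sign correction, your argument coincides with the paper's. Your added remark that convexity also makes the variational inequality sufficient for optimality is a correct bonus beyond what the paper proves.
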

	\begin{proof}
    The existence and uniqueness of solution of the optimal control problem (OCP) follows from \cite[Theorem~$2.14$]{troltzsch10}. This theorem states that OCP \eqref{eq:optimalproblem} with the linear and continuous solution map $\buS$, and closed, bounded and convex set $\bYd \subset \bY$, has a unique solution for $\gamma>0$.
    The problem \eqref{eq:weak-control-state} is linear and hence convex, and Theorem~\ref{thm:Gwelldefined} states that the control to state map is continuous.
    
	\smallskip
    
    For the optimal solution $\by \in \bYd$, we have $j(\by + t (\bx - \by)) \ge j(\by)$ for any $\bx \in \bYd$ and $t \in (0,1]$. Taking $t \to 0$ imply that 
			\begin{align*}
				j'(\by) (\bx - \by) = \lim_{t \to 0} \left(\frac{j(\by+t (\bx - \by)) - j(\by)}{t} \right) \ge 0.
			\end{align*}
			Consider $j(\by+t \bx)$ for some $\bx \in \bYd$, we have
			\begin{align*}
				j(\by+t \bx) & = \frac{1}{2} \left( \| \buu_0 + \buS (\by+t \bx)  - \buu_d \|_{0, \Omega}^2 + \gamma \| (\by+t \bx) \|_{\bY}^2 \right) \\
				& = j(\by) + t \left( (S^u \bx, S^u \by - \bu_d) + (S^p \bx, S^p \by - p_d)  + (S^\psi \bx, S^\psi \by - \psi_d) + \gamma  \langle \by, \bx \rangle_{\bY} \right) \\
				& \qquad + \frac{t^2}{2} \left(  \| S^u \bx \|_{0, \Omega}^2 + \| S^p  \bx  \|_{0, \Omega}^2 +  \| S^\psi \bx \|_{0, \Omega}^2 + \gamma \| \bx \|_{\bY}^2 \right).
			\end{align*}
			Then
			\begin{align} \label{eq:optimalcondn}
				j'(\by)(\bx) = (S^u \bx, S^u \by - \bu_d) + (S^p \bx, S^p \by - p_d)  + (S^\psi \bx, S^\psi \by - \psi_d) + \gamma \langle \by, \bx \rangle_{\bY} .
			\end{align}
			
		In order to further investigate the right-hand side terms in \eqref{eq:optimalcondn}, we consider the adjoint operator defined in \eqref{eq:weak-control-adjoint} corresponding to the problem \eqref{eq:optimal-abstract} with loads, considering the condition \eqref{eq:optimalcondn}. 
		Taking the summation with test functions $\bv= S^u \bx, q = S^p \bx$ and $\phi=S^\psi \bx$
        in \eqref{eq:weak-control-adjoint}, respectively, imply
		\begin{equation}\label{opt-1}
		\begin{aligned}
			&(S^u \bx, S^u \by - \bu_d) + (S^p \bx, S^p \by - p_d)  + (S^\psi \bx, S^\psi \by - \psi_d) \\
			& \,= a_1(S^u \bx, \bz) + b_1(S^u \bx, \zeta) + a_2(r, S^p \bx) +  b_2(S^p \bx, \zeta)+
			b_1(\bz, S^\psi \bx)    -  b_2(r,S^\psi \bx)- a_3(\zeta, S^\psi \bx). 
		\end{aligned}
	\end{equation}
	By definition of solution operator $\buS (\bx)$, we know that
		\begin{subequations} \label{eq:weak-control-S}
			\begin{alignat}{6}
				a_1( S^u \bx, \bv) +  b_1(\bv,  S^\psi \bx)    &=& \langle B \bv, \bx \rangle_{\bY}  &\quad&\forall \bv \in \bV,\\
				a_2(S^p \bx,q) - b_2(q,S^\psi \bx) &=& 0 \qquad&\quad&\forall q \in Q,  \\
				b_1(S^u \bx, \phi)  +  b_2(S^p \bx, \phi) -  a_3(S^\psi \bx, \phi)  &=& 0 \qquad    &\quad&\forall \phi \in Z.
			\end{alignat}
		\end{subequations}
		Summing the above equations with the test functions $\bv= \bz, q = r$ and $\phi=\zeta$ in \eqref{eq:weak-control-S}
        and the use of equation \eqref{opt-1} yield
		\begin{equation}\label{opt-2}
			\begin{aligned}
				&(S^u \bx, S^u \by - \bu_d) + (S^p \bx, S^p \by - p_d)  + (S^\psi \bx, S^\psi \by - \psi_d) 
				 \,=  \langle B \bz, \bx \rangle_{\bY}.
			\end{aligned}
		\end{equation}
		Therefore, the equation \eqref{opt-2} in \eqref{eq:optimalcondn} achieve that, $\forall \bx \in \bYd$,
		\begin{align*}
				0 \le j'(\by)(\bx - \by)  = \langle B \bz + \gamma \by, \bx - \by \rangle_{\bY}.
		\end{align*}
        Thus, the proof is complete.
	\end{proof}

    \bigskip
    As a consequence of above result, we can equivalently write \eqref{eq:optimal-abstract} as the following system:
	seek $(\buu, \by, \buz) \in \bbV \times \bYd\times \bbV $ 
	such that
	\begin{subequations} \label{eq:weak-optimalcontrol}
		\begin{alignat}{6}
			a_1(\bu, \bv)   +  b_1(\bv, \psi) &= & F(\bv) + \langle B \bv, \by \rangle_{\bY}  & \quad & \forall \bv \in \bV, \\
			a_2(p,q)    -    b_2(q,\psi) & = & L(q)  \qquad \quad & \quad & \forall q \in Q, \\
			b_1(\bu, \phi)  + b_2(p,\phi)  - a_3(\psi, \phi) & =& 0  \qquad \qquad& \quad & \forall \phi \in Z,\\
			a_1(\bv, \bz) +  b_1(\bv, \zeta)     &=& (\bu - \bu_d, \bv)_{0, \Omega} \,  &\quad&\forall \bv \in \bV,\\
			a_2(r,q)   +    b_2(q,\zeta)  &=&( p - p_d, q)_{0, \Omega} \, &\quad&\forall q \in Q,  \\
			b_1(\bz, \phi)    -  b_2(r,\phi) - a_3(\zeta, \phi)  &=& (\psi- \psi_d, \phi)_{0, \Omega} \,  &\quad&\forall \phi \in Z, \\
			\langle B \bz + \gamma \by, \bx - \by \rangle_{\bY} &\ge& 0 \qquad \qquad&\quad& \forall \bx \in \bYd.
	\end{alignat}
\end{subequations}

\noindent We introduce the vectorial energy norm $\| \cdot \|_{\bbV}: \bbV \to \bbR$ for any $\buv=(\bv, q, \phi) \in \bbV$ as
\begin{equation*}
    \|\buv\|_{\bbV}^2:= \| \bv\|_{1, \Omega}^2 + \| q\|_{1, \Omega}^2 + \|\phi \|_{0, \Omega}^2.
\end{equation*}

\begin{remark}
    The distributed control or Neumann boundary control problem for a scalar pressure variable (that is $\ell$ as $\ell + y\,  (y \in Y:=L^2(\Omega))$ in \eqref{eq:massconservation}, or $(\kappa/\eta) \nabla p \cdot n^{\partial \Omega} =y\, (y \in Y:=L^2(\Gamma_N))$ in \eqref{bc:Dirichlet}, respectively, for control $y \in Y_d:=\{y \in Y: y \in [y_a,y_b] \}$) can be proceeded in similar manner with appropriate operator $B$ to define the inner product $\langle B q, y\rangle_{Y}$ and test function $q \in Q$.
\end{remark}

In the next step, we formulate the distributed and Neumann boundary control problems by specifying the variational spaces associated with their control variables.


\bigskip
\noindent 
{\bf Distributed control problem:}
We define the control space as $\bY:= [L^2(\Omega)]^2$, and the variational formulation \eqref{eq:weak-optimalcontrol} reduces to:
  seek  $(\buu, \by, \buz) \in \bbV \times \bYd\times \bbV $ 
	 such that
	\begin{subequations} \label{eq:weak-D}
		\begin{alignat}{6}
			a_1(\bu, \bv) +  b_1(\bv, \psi) & = &  (\ff+\by, \bv)_{0, \Omega}  & \quad & \forall \bv \in \bV, \label{eq:weak-D-a}\\
			a_2(p,q)  - b_2(q,\psi) & = & L(q)  \qquad \quad & \quad & \forall q \in Q,  \label{eq:weak-D-b}\\
			b_1(\bu, \phi) + b_2(p,\phi) - a_3(\psi, \phi) & =& 0  \qquad \qquad& \quad & \forall \phi \in Z,  \label{eq:weak-D-c}\\
			a_1(\bv, \bz) +  b_1(\bv, \zeta)     &=& (\bu - \bu_d, \bv)_{0, \Omega} \,  &\quad&\forall \bv \in \bV,  \label{eq:weak-D-d}\\
			a_2(r,q) + b_2(q,\zeta)  &=&( p - p_d, q)_{0, \Omega} \, &\quad&\forall q \in Q,  \label{eq:weak-D-e} \\
			b_1(\bz, \phi) -  b_2(r,\phi) - a_3(\zeta, \phi)  &=& (\psi- \psi_d, \phi)_{0, \Omega} \,  &\quad&\forall \phi \in Z,  \label{eq:weak-D-f}\\
			( \bz + \gamma \by, \bx - \by )_{0,\Omega} &\ge& 0 \qquad \qquad&\quad& \forall \bx \in \bYd. \label{eq:optimal-D}
		\end{alignat}
	\end{subequations}
    Moreover, the optimality condition \eqref{eq:optimal-D} is equivalent to (refer \cite{troltzsch10})
		\begin{align*}
			\by &:= \bPi_{[a,b]} \left( \left({-1}/{\gamma}\right) \bz  \right),  \quad 
            \text{ where }  \bPi_{[a,b]} \boldsymbol{q}(x):= \max \{ \by_a, \min\{ \by_b, \boldsymbol{q}(x) \}\} \quad \forall x \in \Omega. 
		\end{align*}

\bigskip
\noindent 
{\bf Neumann boundary control problem:}
We impose the boundary conditions \eqref{bc:Dirichlet}-\eqref{bc:Neumann} in strong form with $$(2{\mu} \beps(\bu)  -\psi \bI) \nn^{\partial \Omega} =\by \qquad \text{on } \Gamma_N.$$
    Define the variational space for control variable as $\bY:= [L^2(\Gamma)]^2$, and 
    the weak formulation for Neumann control can be stated from \eqref{eq:weak-optimalcontrol} as,
	seek  $(\buu, \by, \buz) \in \bbV \times \bYd\times \bbV $ 
	 such that
	\begin{subequations} \label{eq:weak-N}
		\begin{alignat}{6}
			a_1(\bu, \bv)  +  b_1(\bv, \psi) &= & (\ff,\bv)_{0, \Omega} + (\bv, \by )_{0, \Gamma_N}  & \quad & \forall \bv \in \bV, \label{eq:weak-N-a}\\
			a_2(p,q) - b_2(q,\psi) & = & L(q)  \qquad \quad & \quad & \forall q \in Q, \label{eq:weak-N-b}\\
			b_1(\bu, \phi)  + b_2(p,\phi) - a_3(\psi, \phi) & =& 0  \qquad \qquad& \quad & \forall \phi \in Z, \label{eq:weak-N-c}\\
			a_1(\bv, \bz) +  b_1(\bv, \zeta)     &=&
            (\bu - \bu_d, \bv)_{0, \Omega}
            \,  &\quad&\forall \bv \in \bV, \label{eq:weak-N-d}\\
			a_2(r,q)  +  b_2(q,\zeta)  &=& 
            ( p - p_d, q)_{0, \Omega}
            \, &\quad&\forall q \in Q, \label{eq:weak-N-e} \\
			b_1(\bz, \phi) - b_2(r,\phi) -  a_3(\zeta, \phi)  &=& 
            (\psi- \psi_d, \phi)_{0, \Omega}
            \,  &\quad&\forall \phi \in Z, \label{eq:weak-N-f} \\
			(\bz + \gamma \by, \bx - \by )_{0,\Gamma_N} &\ge& 0 \qquad \qquad&\quad& \forall \bx \in \bYd. \label{eq:optimal-N}
		\end{alignat}
	\end{subequations}
	The optimality condition \eqref{eq:optimal-N} is equivalent to 
	\begin{align*}
		\by & := \bPi_{[a,b]} \left( \left({-1}/{\gamma}\right) \bz  \right), \quad \text{ where } \bPi_{[a,b]} \boldsymbol{q}(x):= \max \{ \by_a, \min\{ \by_b, \boldsymbol{q}(x) \}\} \quad \forall x \in \Gamma_N.
	\end{align*}



	\section{Abstract analysis} \label{sec:abstract}
	
	In what follows, we denote by  ${\mathcal T}_h$ sequences of polygonal decompositions of the domain $\Omega$,
    having mesh-size $h:=\max_{K\in{\mathcal T}_h}h_K$, for any polygon $K$.
	We will follow the analysis further with the optimize-then-discretize strategy. Taking into account the conforming spaces $\bV_h \subset \bV$, $Q_h \subset Q$, $Z_h \subset Z$ and $\bY_h \subset \bY$ ($\bY_{d,h} \subset \bYd$), denoting $\bbV_h:= \bV_h \times Q_h \times Z_h$ and the discrete formulation in terms of $(\buu_h, \by_h, \buz_h):= ((\bu_h, p_h, \psi_h), \by_h, (\bz_h, r_h, \zeta_h)) \in \bbV_h \times \bY_{d,h} \times \bbV_h$ 
	 is expressed as,
	\begin{subequations} \label{eq:weak-h}
		\begin{align}
			a_{1,h}(\bu_h, \bv_h) + b_1(\bv_h, \psi_h) &= F_h(\bv_h) + \langle B_h \bv_h, \by_h \rangle_{\bY}  & \forall \bv_h \in \bV_h, \\
			a_{2,h}(p_h, q_h) - b_2(q_h, \psi_h) & =  \qquad L_h(q_h)  & \forall q_h \in Q_h, \\
			b_1(\bu_h, \phi_h) + b_2(p_h, \phi_h) - a_3(\psi_h, \phi_h) & = \qquad \quad 0  & \forall \phi_h \in Z_h, \\
			a_{1,h}(\bv_h, \bz_h) + b_1(\bv_h, \zeta_h) &= \langle \bu_h - \bu_d, \bv_h \rangle_h  & \forall \bv_h \in \bV_h, \\
			a_{2,h}(r_h, q_h) + b_2(q_h, \zeta_h) & = \langle p_h - p_d, q_h \rangle_h & \forall q_h \in Q_h, \\
			b_1(\bz_h, \phi_h) - b_2(r_h, \phi_h) - a_3(\zeta_h, \phi_h) & = ( \psi_h - \psi_d, \phi_h )_{0, \Omega}  & \forall \phi_h \in Z_h, \\
			\langle B_h \bz_h+ \gamma \by_h, \bx_h - \by_h \rangle_{\bY} & \ge  \qquad \quad 0 &\quad \forall \bx_h \in \bY_{d,h},  \label{eq:optimality-h}
		\end{align}
	\end{subequations}
	where $B_h: \bV+\bV_h \to \bY_{d}$ is a continuous map, the discrete bilinear forms $a_{1,h}( \cdot, \cdot): \bV_h \times \bV_h \to \bbR$, $a_{2,h}( \cdot, \cdot): Q_h \times Q_h \to \bbR$ are bounded and coercive, $b_1(\cdot, \cdot)$ satisfies inf-sup condition on $\bV_h \times Z_h$, bilinear forms $b_2(\cdot, \cdot)$ and $a_3(\cdot, \cdot)$ are bounded, the discrete linear functionals $F_h(\cdot): \bV_h \to \bbR$, $L_h(\cdot): Q_h \to \bbR$ and the discrete linear functional $\langle v , \cdot \rangle_h: \{\bV_h,Q_h\} \to \bbR$ for any $v$ in $L^2(\Omega)$ are analogous to the $L^2$ inner product. Also, we introduce 
    the vectorial norm $\triplenorm{\cdot}_h: \bbV_h \to \bbR$ (analogous to $L^2$ norm) as
    $\triplenorm{\buv_h}_h^2:= \langle \bv_h, \bv_h\rangle_h + \langle q_h, q_h \rangle_h + \langle\phi_h, \phi_h \rangle_h$ for any $\buv_h=(\bv_h, q_h, \phi_h) \in \bbV_h$.

	
	
	
	\subsection{Abstract error analysis}

    We state here the following projections which will be needed in developing the
    the error estimates:
	
	\begin{itemize}
		\item {\bf $\bY$ projection:} Define $\bPi^{Y}: \bY \to \bY_h$ as
		\begin{align} \label{def:Py}
			\langle \bPi^{Y} \by - \by, \bx_h\rangle_{\bY} =0 \qquad \qquad \forall \,\bx_h \in \bY_h.
		\end{align}
		\item {\bf Poroelastic projection:} Introduce the projection operator
		$(P_h, \tP_h)=((P_h^u, P_h^p, P_h^{\psi}), (\tP_h^u,  \tP_h^p,  \tP_h^{\psi})): \bbV \times \bbV \to \bbV_h \times \bbV_h$
		 as
		\begin{subequations}\label{eq:Poro-proj}
			\begin{alignat}{6}
				a_{1,h}(P_h^u \bu, \bv_h) +  b_1(\bv_h, P_h^{\psi} \psi)     &=&\;F_h(\bv_h) + \langle \by, B_h \bv_h \rangle_{\bY}    &\quad&\forall \bv_h \in \bV_h, \label{eq:Poro-proj-a}\\
				a_{2,h}(P_h^p p, q_h) - b_2(q_h, P_h^{\psi} \psi)  &=& L_h(q_h) \qquad \quad   &\quad&\forall q_h \in Q_h, \label{eq:Poro-proj-b} \\
				b_1(P_h^u \bu, \phi_h)  +  b_2(P_h^p p, \phi_h) - a_3(P_h^\psi \psi, \phi_h)  &=&0 \qquad \, \qquad   &\quad&\forall \phi_h \in Z_h, \label{eq:Poro-proj-c} \\
				a_{1,h}(\tP_h^u \bz, \bv_h) + b_1(\bv_h, \tP_h^{\psi} \zeta)     &=&\;\langle \bu - \bu_d, \bv_h \rangle_h  \qquad  &\quad&\forall \bv_h \in \bV_h, \label{eq:Poro-proj-d}\\
				a_{2,h}(\tP_h^p r, q_h) + b_2(q_h, \tP_h^{\psi} \zeta)  &=& \langle p- p_d, q_h \rangle_h  \qquad  &\quad&\forall q_h \in Q_h, \label{eq:Poro-proj-e} \\
				b_1(\tP_h^u \bz, \phi_h) - b_2(\tP_h^p r, \phi_h) - a_3(\tP_h^\psi \zeta, \phi_h)  &=& \,\,( \psi - \psi_d, \phi_h )_{0, \Omega} \qquad  &\quad&\forall \phi_h \in Z_h. \label{eq:Poro-proj-f}
		\end{alignat}\end{subequations}
	\end{itemize}
For given $(\buu, \buz) \in \bbV \times \bbV$, there exists a unique solution $(P_h \buu, \tP_h \buz) \in \bbV_h \times \bbV_h$ satisfying the system of equations \eqref{eq:Poro-proj} (refer to \cite{verma24}).


The next result will be used in what follows.

\begin{lemma} \label{lem:dis-proj}
		Assume that $\buu \in \bbV$ and $P_h \buu \in \bbV_h$ are the solution of continuous problem  \eqref{eq:weak-D-a}-\eqref{eq:weak-D-c} and poroelastic projection \eqref{eq:Poro-proj-a}-\eqref{eq:Poro-proj-c}, respectively.
		 Then it holds, for a positive constant $C$, independent of $h$,
    \begin{align*}
		2 \mu & \| \beps(\bu - P_h^u \bu) \|_{0, \Omega}^2  + c_0 \| p - P_h^p p \|_{0, \Omega}^2 +  \frac{\kappa_1}{\eta} \| \nabla (p - P_h^p p)\|_{0, \Omega}^2+ \| \psi - P_h^{\psi} \psi \|_{0, \Omega}^2 \\
		& \le C \bigg( \inf_{(0 \neq)\buw_h \in \bbV_h}\frac{1}{\|\buw_h\|_{\bbV}} \Big( |(F-F_h)(\bv_h)|  + |(L-L_h)(q_h)| + |\langle \by, (B-B_h) \bv_h \rangle_{\bY}| \Big)  \\
        & \qquad + \left( 1 + \frac{1}{\lambda} + \frac{\alpha}{\lambda} \right) \inf_{\bar{\phi}_h \in Z_h} \|\psi-\bar{\phi}_h\|_{0, \Omega}+  \left(  \frac{\kappa_2}{\eta} + \frac{\alpha}{\lambda} \right) \inf_{\bar{q}_h \in Q_h}  \| p- \bar{q}_h \|_{1, \Omega} + 2 \mu \inf_{\bar{\bv}_h \in \bV_h} \| \bu - \bar{\bv}_h\|_{1, \Omega}  \\
        & \qquad + \sum_{K \in \cT_h} \Big( 2 \mu \|\bu - \bu_{\pi} \|_{1, K}  + \frac{\kappa_2}{\eta} \|p- p_{\pi}\|_{1, K} \Big) \bigg),
    \end{align*}
    where $(\bar{\bv}_h, \bar{q}_h, \bar{\phi}_h) \in \bbV_h$ is an arbitrary function, and local polynomial approximations $\bu_{\pi}$ and $p_{\pi}$ for $\bu$ and $p$, respectively.
\end{lemma}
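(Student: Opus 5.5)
We follow the Strang-type argument for the three-field formulation used in \cite{verma24}. Fix arbitrary $(\bar{\bv}_h,\bar{q}_h,\bar{\phi}_h)\in\bbV_h$ and split each error as $\bu-P_h^u\bu=(\bu-\bar{\bv}_h)+(\bar{\bv}_h-P_h^u\bu)=:\boldsymbol{\rho}_u+\boldsymbol{e}_u$, and analogously $\rho_p+e_p$ and $\rho_\psi+e_\psi$. The interpolation parts $\boldsymbol{\rho}$ appear directly on the right-hand side. The discrete parts $(\boldsymbol{e}_u,e_p,e_\psi)\in\bbV_h$ are controlled by an energy argument on a discrete error system.

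First we derive the error equations. Subtract \eqref{eq:Poro-proj-a}--\eqref{eq:Poro-proj-c} from \eqref{eq:weak-D-a}--\eqref{eq:weak-D-c}, tested with discrete functions, after writing $a_1(\bu,\bv_h)=a_{1,h}(\bar{\bv}_h,\bv_h)+[a_1(\bu,\bv_h)-a_{1,h}(\bar{\bv}_h,\bv_h)]$. This gives
\begin{align*}
a_{1,h}(\boldsymbol{e}_u,\bv_h)+b_1(\bv_h,e_\psi)&=(F-F_h)(\bv_h)+\langle\by,(B-B_h)\bv_h\rangle_{\bY}-[a_1(\bu,\bv_h)-a_{1,h}(\bar{\bv}_h,\bv_h)]-b_1(\bv_h,\rho_\psi),\\
a_{2,h}(e_p,q_h)-b_2(q_h,e_\psi)&=(L-L_h)(q_h)-[a_2(p,q_h)-a_{2,h}(\bar q_h,q_h)]+b_2(q_h,\rho_\psi),\\
b_1(\boldsymbol{e}_u,\phi_h)+b_2(e_p,\phi_h)-a_3(e_\psi,\phi_h)&=-b_1(\boldsymbol{\rho}_u,\phi_h)-b_2(\rho_p,\phi_h)+a_3(\rho_\psi,\phi_h).
\end{align*}
The consistency brackets are handled elementwise in the usual VEM way. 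We insert the local polynomial $\bu_\pi$ and use polynomial consistency $a_1^K(\bu_\pi,\bv_h)=a_{1,h}^K(\bu_\pi,\bv_h)$ together with boundedness of $a_{1,h}^K$. This bounds the bracket by $C\,2\mu\sum_K(\|\bu-\bu_\pi\|_{1,K}+\|\bu-\bar{\bv}_h\|_{1,K})\|\bv_h\|_{1,K}$. The same argument with $p_\pi$ and $\kappa_2/\eta$ handles $a_2$.

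Second, we test with $\bv_h=\boldsymbol{e}_u$, $q_h=e_p$, $\phi_h=-e_\psi$ and add the three equations. The $b_1$ and $b_2$ couplings cancel, leaving $a_{1,h}(\boldsymbol{e}_u,\boldsymbol{e}_u)+a_{2,h}(e_p,e_p)+a_3(e_\psi,e_\psi)$ on the left. Coercivity bounds this from below by $2\mu\|\beps(\boldsymbol{e}_u)\|^2+c_0\|e_p\|^2+\frac{\kappa_1}{\eta}\|\nabla e_p\|^2+\frac1\lambda\|e_\psi\|^2$. On the right, each term is a functional times $\|\boldsymbol{e}_u\|_{1}$, $\|e_p\|_1$ or $\|e_\psi\|_0$; the third-equation terms produce the factors $1/\lambda$ and $\alpha/\lambda$ in the statement.

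The main obstacle is that the $\|e_\psi\|_{0}^2$ control from $a_3$ carries only the weight $1/\lambda$, whereas the statement requires a $\lambda$-robust bound on $\|\psi-P_h^\psi\psi\|_0^2$. To recover it we use the discrete inf-sup condition for $b_1$ on $\bV_h\times Z_h$. From the first error equation,
\[
\beta\|e_\psi\|_0\le\sup_{\bv_h}\frac{b_1(\bv_h,e_\psi)}{\|\bv_h\|_1}\le C\bigl(2\mu\|\beps(\boldsymbol{e}_u)\|_0+\text{data/consistency terms}+\|\rho_\psi\|_0\bigr).
\]
We square this, multiply by a small constant, and add it to the energy identity. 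Young's inequality then absorbs the right-hand-side products of $\|e_\psi\|_0$, $\|\boldsymbol{e}_u\|_1$ (via Korn) and $\|e_p\|_1$ into the left-hand side, with constants independent of $\lambda$. Normalising the data functionals by $\|\buw_h\|_{\bbV}$ yields the infimum term. We will be loose about squared versus unsquared quantities, as the statement itself is.

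Finally, a triangle inequality combines the bound on the discrete errors with the $\boldsymbol{\rho}$ terms, and taking the infimum over $(\bar{\bv}_h,\bar q_h,\bar\phi_h)$ gives the stated estimate.
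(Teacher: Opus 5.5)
Your error equations are the paper's (with the opposite sign convention $\boldsymbol{e}_u=\bar{\bv}_h-P_h^u\bu$). Your overall route is also the one the paper takes: energy test, coercivity, discrete inf-sup for the total pressure, and VEM consistency through $\bu_\pi$ and $p_\pi$. But the energy step fails as written. With $\bv_h=\boldsymbol{e}_u$, $q_h=e_p$, $\phi_h=-e_\psi$, only the $b_1$ terms cancel. The second equation contributes $-b_2(e_p,e_\psi)$, and the third, tested with $-e_\psi$, contributes a second $-b_2(e_p,e_\psi)$. The left-hand side is therefore
\[
a_{1,h}(\boldsymbol{e}_u,\boldsymbol{e}_u)+a_{2,h}(e_p,e_p)-2\,b_2(e_p,e_\psi)+a_3(e_\psi,e_\psi).
\]
No rescaling of the three test functions removes both couplings while keeping $a_{1,h}$, $a_{2,h}$, $a_3$ with positive sign; this reflects the non-symmetry of the three-field system. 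So the lower bound $2\mu\|\beps(\boldsymbol{e}_u)\|^2+c_0\|e_p\|^2+\frac{\kappa_1}{\eta}\|\nabla e_p\|^2+\frac{1}{\lambda}\|e_\psi\|^2$ does not follow from coercivity. For instance, take $e_p$ constant and $e_\psi=\alpha e_p$ (admissible when $\Gamma_N=\emptyset$). Then $a_{2,h}(e_p,e_p)-2b_2(e_p,e_\psi)+a_3(e_\psi,e_\psi)=c_0\|e_p\|_{0,\Omega}^2$, which is not bounded below by $\frac{1}{\lambda}\|e_\psi\|_{0,\Omega}^2=\frac{\alpha^2}{\lambda}\|e_p\|_{0,\Omega}^2$ uniformly in $c_0$.

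The missing ingredient is the $\alpha^2/\lambda$ part of the mass term in $a_{2,h}$. Elementwise,
\[
\frac{\alpha^2}{\lambda}\|\Pi^0_K e_p\|_{0,K}^2-\frac{2\alpha}{\lambda}(\Pi^0_K e_p,e_\psi)_{0,K}+\frac{1}{\lambda}\|e_\psi\|_{0,K}^2=\frac{1}{\lambda}\|\alpha\,\Pi^0_K e_p-e_\psi\|_{0,K}^2\ge 0,
\]
and the accompanying $\frac{\alpha^2}{\lambda}S^0_K$ contribution is nonnegative. Hence, up to the stability constants, the left-hand side controls only $2\mu\|\beps(\boldsymbol{e}_u)\|^2+c_0\|e_p\|^2+\frac{\kappa_1}{\eta}\|\nabla e_p\|^2$. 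This is why the statement carries $c_0$ rather than $c_0+\alpha^2/\lambda$. All control of $e_\psi$ must then come from the inf-sup step; it is needed at all, not merely for $\lambda$-robustness. Your inf-sup bound uses only the first error equation and never the term $\frac{1}{\lambda}\|e_\psi\|^2$, so the rest of your plan goes through once the energy step is corrected this way. The paper's proof of this lemma is silent about the cross term, but the same completing-the-square identity appears explicitly in its later energy-norm corollary for the distributed problem.
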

\begin{proof}
For any function ${\buw}_h:=(\bar{\bv}_h, \bar{q}_h, \bar{\phi}_h) \in \bbV_h$ with equations of poroelastic projection \eqref{eq:Poro-proj}, and use of the continuous problem \eqref{eq:weak-D}, we obtain
	\begin{align*}
		a_{1,h}(P_h^u \bu - \bar{\bv}_h, \bv_h)+  b_1(\bv_h, P_h^{\psi} \psi - \bar{\phi}_h) & = (F_h-F)(\bv_h) + \langle \by, (B_h-B) \bv_h \rangle_{\bY} \\
		& \quad + b_1(\bv_h, \psi-\bar{\phi}_h) + \big( a_1(\bu, \bv_h) - a_{1,h}(\bar{\bv}_h, \bv_h) \big), \qquad \qquad \qquad \qquad \\
		a_{2,h}(P_h^p p - \bar{q}_h, q_h)   -    b_2(q_h, P_h^{\psi} \psi - \bar{\phi}_h) & = (L_h-L)(q_h) + \big( a_2(p, q_h) - a_{2,h}(\bar{q}_h, q_h) \big)  \\
		& \qquad - b_2(q_h, \psi-\bar{\phi}_h), \qquad \qquad \qquad \qquad \qquad \qquad \qquad \\
		b_1(P_h^u \bu - \bar{\bv}_h, \phi_h)    +  b_2(P_h^p p - \bar{q}_h, \phi_h)- a_3(P_h^\psi \psi - \bar{\phi}_h, \phi_h)  & =b_1(\bu - \bar{\bv}_h, \phi_h) +  b_2(p - \bar{q}_h, \phi_h) \\
		& \qquad - a_3(\psi - \bar{\phi}_h, \phi_h).
	\end{align*}
    Taking the test functions as $\bv_h=P_h^u \bu - \bar{\bv}_h$, $q_h=P_h^p p - \bar{q}_h$ and $\phi_h=P_h^{\psi} \psi - \bar{\phi}_h$ alongwith the coercivity of discrete bilinear forms yield,
	\begin{align*}
		& 2 \mu  \| \beps( P_h^u \bu - \bar{\bv}_h) \|_{0, \Omega}^2  + c_0 \| P_h^p p - \bar{q}_h \|_{0, \Omega}^2   + \frac{\kappa_1}{\eta} \| \nabla (P_h^p p - \bar{q}_h)\|_{0, \Omega}^2 \\
        & \lesssim (F_h-F)(\bv_h) + \langle \by, (B_h-B) \bv_h \rangle_{\bY} + b_1(\bv_h, \psi-\bar{\phi}_h) + \big( a_1(\bu, \bv_h) - a_{1,h}(\bar{\bv}_h, \bv_h) \big) + (L_h-L)(q_h) \\
        & \quad + \big( a_2(p, q_h) - a_{2,h}(\bar{q}_h, q_h) \big)  - b_2(q_h, \psi-\bar{\phi}_h) + b_1(\bu - \bar{\bv}_h, \phi_h) +  b_2(p - \bar{q}_h, \phi_h)- a_3(\psi - \bar{\phi}_h, \phi_h).
	\end{align*}
    For any $\buw_h(\neq \underline{\cero}) \in \bbV_h$ and use of inf-sup condition of $b_1(\cdot, \cdot)$ 
    gives
    \begin{align*}
		& 2 \mu  \| \beps( P_h^u \bu - \bar{\bv}_h) \|_{0, \Omega}  + c_0 \| P_h^p p - \bar{q}_h\|_{0, \Omega}^2  + \frac{\kappa_1}{\eta} \| \nabla (P_h^p p - \bar{q}_h)\|_{0, \Omega} + 
        \| P_h^\psi \psi - \bar{\phi}_h \|_{0,\Omega}  \\
        & \lesssim \inf_{( \underline{\cero} \neq)\buw_h \in \bbV_h}\frac{1}{\|\buw_h\|_{\bbV}} \bigg( (F_h-F)(\bv_h) + \langle \by, (B_h-B) \bv_h \rangle_{\bY} + b_1(\bv_h, \psi-\bar{\phi}_h) + \big( a_1(\bu, \bv_h) - a_{1,h}(\bar{\bv}_h, \bv_h) \big)\\
        & \qquad \qquad \qquad \qquad \qquad  + (L_h-L)(q_h)  + \big( a_2(p, q_h) - a_{2,h}(\bar{q}_h, q_h) \big)  - b_2(q_h, \psi-\bar{\phi}_h) + b_1(\bu - \bar{\bv}_h, \phi_h) \\
        & \qquad \qquad \qquad \qquad \qquad +  b_2(p - \bar{q}_h, \phi_h)- a_3(\psi - \bar{\phi}_h, \phi_h) \bigg) \\
        & \lesssim \inf_{( \underline{\cero} \neq)\buw_h \in \bbV_h}\frac{1}{\|\buw_h\|_{\bbV}} \Big( (F_h-F)(\bv_h)  + (L_h-L)(q_h) + \langle \by, (B_h-B) \bv_h \rangle_{\bY} \Big) + \left( 1 + \frac{1}{\lambda} + \frac{\alpha}{\lambda} \right) \|\psi-\bar{\phi}_h\|_{0, \Omega} \\
        & \qquad +  \left(  \frac{\kappa_2}{\eta} + \frac{\alpha}{\lambda} \right) \| p- \bar{q}_h \|_{1, \Omega} + 2 \mu \| \bu - \bar{\bv}_h\|_{1, \Omega} + \sum_{K \in \cT_h} \Big( 2 \mu\|\bu - \bu_{\pi} \|_{1, K}  + \frac{\kappa_2}{\eta} \|p- p_{\pi}\|_{1, K} \Big).
    \end{align*}
    The proof is complete.
\end{proof}

\bigskip

We will follow the above lemma to derive the estimates for the poroelastic projection for the adjoint problem, and add the proof for completion.

\begin{lemma} \label{lem:dis-proj-t}
	Assume that $\buz \in \bbV$ and $\tP_h \buz \in \bbV_h$ are the solution of the continuous problem \eqref{eq:weak-D-a}-\eqref{eq:weak-D-c} and the poroelastic projection \eqref{eq:Poro-proj-d}-\eqref{eq:Poro-proj-f}, respectively. Then it holds, for a positive constant $C$, independent of $h$,
    \begin{align*}
		2 \mu & \| \beps(\bz - \tP_h^u \bz) \|_{0, \Omega}^2  + c_0 \| r - \tP_h^p r \|_{0, \Omega}^2 +  \frac{\kappa_1}{\eta} \| \nabla (r - \tP_h^p r)\|_{0, \Omega}^2+ \| \zeta - \tP_h^{\psi} \zeta \|_{0, \Omega}^2 \\
        & \lesssim  \inf_{(0 \neq)\buw_h \in \bbV_h}\frac{1}{\|\buw_h\|_{\bbV}} \Big( |\langle \bu - \bu_d, \bv_h \rangle_h -  (\bu - \bu_d, \bv_h)_{0, \Omega}|  + |\langle p - p_d, q_h \rangle_h -  (p - p_d, q_h)_{0, \Omega}| \Big) \\
        & \qquad + 2 \mu \inf_{\bar{\bv}_h \in \bV_h} \| \bz - \bar{\bv}_h\|_{1, \Omega} + \left( 1 + \frac{1}{\lambda} + \frac{\alpha}{\lambda} \right) \inf_{\bar{\phi}_h \in Z_h} \|\zeta -\bar{\phi}_h\|_{0, \Omega}+  \left(  \frac{\kappa_2}{\eta} + \frac{\alpha}{\lambda} \right) \inf_{\bar{q}_h \in Q_h}  \| r- \bar{q}_h \|_{1, \Omega}  \\
        & \qquad + \sum_{K \in \cT_h} \Big( 2 \mu\|\bz - \bz_{\pi} \|_{1, K}  + \frac{\kappa_2}{\eta} \|r- r_{\pi}\|_{1, K} \Big),
    \end{align*}
    where $(\bar{\bv}_h, \bar{q}_h, \bar{\phi}_h) \in \bbV_h$ is an arbitrary function, and local polynomial approximations $\bz_{\pi}$ and $r_{\pi}$ for $\bz$ and $r$, respectively.
\end{lemma}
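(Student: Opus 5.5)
We mirror the proof of Lemma~\ref{lem:dis-proj}, now for the adjoint equations. The statement says $\buz$ solves \eqref{eq:weak-D-a}--\eqref{eq:weak-D-c}; we read this as the adjoint equations \eqref{eq:weak-D-d}--\eqref{eq:weak-D-f}. Fix an arbitrary $\buw_h=(\bar{\bv}_h,\bar{q}_h,\bar{\phi}_h)\in\bbV_h$. We subtract \eqref{eq:weak-D-d}--\eqref{eq:weak-D-f}, tested with discrete functions (allowed by conformity $\bbV_h\subset\bbV$), from the projection equations \eqref{eq:Poro-proj-d}--\eqref{eq:Poro-proj-f}, and add and subtract the terms involving $\buw_h$. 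This gives the error system
\begin{align*}
a_{1,h}(\tP_h^u\bz-\bar{\bv}_h,\bv_h)+b_1(\bv_h,\tP_h^\psi\zeta-\bar{\phi}_h)&=\langle \bu-\bu_d,\bv_h\rangle_h-(\bu-\bu_d,\bv_h)_{0,\Omega}+b_1(\bv_h,\zeta-\bar{\phi}_h)\\
&\quad+\big(a_1(\bv_h,\bz)-a_{1,h}(\bar{\bv}_h,\bv_h)\big),\\
a_{2,h}(\tP_h^p r-\bar{q}_h,q_h)+b_2(q_h,\tP_h^\psi\zeta-\bar{\phi}_h)&=\langle p-p_d,q_h\rangle_h-(p-p_d,q_h)_{0,\Omega}+b_2(q_h,\zeta-\bar{\phi}_h)\\
&\quad+\big(a_2(r,q_h)-a_{2,h}(\bar{q}_h,q_h)\big),\\
b_1(\tP_h^u\bz-\bar{\bv}_h,\phi_h)-b_2(\tP_h^p r-\bar{q}_h,\phi_h)-a_3(\tP_h^\psi\zeta-\bar{\phi}_h,\phi_h)&=b_1(\bz-\bar{\bv}_h,\phi_h)-b_2(r-\bar{q}_h,\phi_h)\\
&\quad-a_3(\zeta-\bar{\phi}_h,\phi_h).
\end{align*}
The $\psi$-equation has identical right-hand sides $(\psi-\psi_d,\phi_h)_{0,\Omega}$ in \eqref{eq:weak-D-f} and \eqref{eq:Poro-proj-f}, so no consistency term appears there.

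Next we test with $\bv_h=\tP_h^u\bz-\bar{\bv}_h$, $q_h=\tP_h^p r-\bar{q}_h$ and $\phi_h=\tP_h^\psi\zeta-\bar{\phi}_h$, and subtract the third equation from the sum of the first two. With this sign choice the couplings $b_1(\bv_h,\phi_h)$ cancel. The term $b_2(q_h,\phi_h)$ enters with $+$ in the second equation and with $+$ after negating the third, so it does not cancel as it did in the state case. We then absorb it. Combining $a_3(\phi_h,\phi_h)=\frac{1}{\lambda}\|\phi_h\|_{0}^2$ with the part $\frac{\alpha^2}{\lambda}\|q_h\|_0^2$ of $a_{2,h}$ gives $\frac{1}{\lambda}\|\alpha q_h+\phi_h\|_0^2\ge 0$ up to the sign conventions, leaving $c_0\|q_h\|_0^2$. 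This is the same mechanism that gives \eqref{eq:cts_dependence}.

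Coercivity of $a_{1,h}$ and $a_{2,h}$ then bounds $2\mu\|\beps(\bv_h)\|_0^2+c_0\|q_h\|_0^2+\frac{\kappa_1}{\eta}\|\nabla q_h\|_0^2$ by the right-hand sides. The total pressure error $\|\phi_h\|_0$ is recovered from the discrete inf-sup condition of $b_1$ on $\bV_h\times Z_h$, applied to the first error equation. After dividing by $\|\buw_h\|_{\bbV}$, the data-consistency terms give exactly the first infimum in the statement. The terms $b_1(\cdot,\zeta-\bar{\phi}_h)$, $b_2(\cdot,\zeta-\bar{\phi}_h)$ and $a_3$ produce the factor $(1+\frac{1}{\lambda}+\frac{\alpha}{\lambda})\|\zeta-\bar{\phi}_h\|_0$, while $b_2(r-\bar{q}_h,\cdot)$ and the $a_2$-difference produce $(\frac{\kappa_2}{\eta}+\frac{\alpha}{\lambda})\|r-\bar{q}_h\|_1$.

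The main obstacle is the VEM consistency terms $a_1(\bv_h,\bz)-a_{1,h}(\bar{\bv}_h,\bv_h)$ and $a_2(r,q_h)-a_{2,h}(\bar{q}_h,q_h)$. We split them elementwise, inserting local polynomials $\bz_\pi$ and $r_\pi$ and using polynomial consistency $a_{1,h}^K(\bz_\pi,\bv_h)=a_1^K(\bz_\pi,\bv_h)$ (and its analogue for $a_{2,h}$), together with stability of the discrete forms. This gives bounds by $2\mu(\|\bz-\bz_\pi\|_{1,K}+\|\bz-\bar{\bv}_h\|_{1,K})$ and $\frac{\kappa_2}{\eta}(\|r-r_\pi\|_{1,K}+\|r-\bar{q}_h\|_{1,K})$, times $\|\bv_h\|_{1,K}$ or $\|q_h\|_{1,K}$, exactly as in Lemma~\ref{lem:dis-proj}. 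Finally, the triangle inequality $\bz-\tP_h^u\bz=(\bz-\bar{\bv}_h)-(\tP_h^u\bz-\bar{\bv}_h)$, and similarly for $r$ and $\zeta$, followed by the infimum over $\buw_h$, yields the claimed estimate.
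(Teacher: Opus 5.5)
Your proposal is correct and follows the paper's proof essentially step by step: error equations against an arbitrary $\buw_h=(\bar{\bv}_h,\bar{q}_h,\bar{\phi}_h)$, testing with the discrete errors, coercivity, and polynomial consistency for the VEM terms. It is in fact more careful than the paper's sketch, since you read the hypothesis correctly as the adjoint equations \eqref{eq:weak-D-d}--\eqref{eq:weak-D-f}, you get the right signs $-b_2(r-\bar{q}_h,\phi_h)-a_3(\zeta-\bar{\phi}_h,\phi_h)$ in the third error equation (the paper writes $+b_2(r-\bar{q}_h,\phi_h)$ and $a_3(\psi-\bar{\phi}_h,\phi_h)$), and you handle the $2b_2$ coupling by completing the square $\frac{1}{\lambda}\|\alpha\Pi^0_K q_h+\phi_h\|_{0,K}^2$ and recover $\|\tP_h^\psi\zeta-\bar{\phi}_h\|_{0,\Omega}$ from the discrete inf-sup condition, whereas the paper simply places that term on the left ``by coercivity''.

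Two small remarks. First, your argument bounds the consistency terms through their dual norm, i.e.\ a $\sup$ over $(\cero\neq)\buw_h\in\bbV_h$, so the first term should be read as a $\sup$. The $\inf$ that you say you obtain ``exactly'' (inherited from the statement) is trivially zero, e.g.\ for $\bv_h=\cero$, $q_h=0$. Second, in the state case the $b_2$ coupling does not cancel either; it completes to $\frac{1}{\lambda}\|\alpha\Pi^0_K q_h-\phi_h\|_{0,K}^2$, as the paper's energy-norm corollary shows, so your contrary remark is harmless but inaccurate.
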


\begin{proof}
     For any function ${\buw}_h:=(\bar{\bv}_h, \bar{q}_h, \bar{\phi}_h) \in \bbV_h$ with use of poroelastic projection \eqref{eq:Poro-proj} and the continuous problem \eqref{eq:weak-D}, we get
    \begin{align*}
		a_{1,h}(\tP_h^u \bz - \bar{\bv}_h, \bv_h) +  b_1(\bv_h, \tP_h^{\psi} \zeta - \bar{\phi}_h) & = \big( \langle \bu - \bu_d, \bv_h \rangle_h -  (\bu - \bu_d, \bv_h)_{0, \Omega} \big) 
        \\
		& \quad + b_1(\bv_h, \zeta-\bar{\phi}_h) + \big( a_1(\bz, \bv_h) - a_{1,h}(\bar{\bv}_h, \bv_h) \big), \qquad \qquad \qquad \qquad \\
		a_{2,h}( \tP_h^p r - \bar{q}_h, q_h) +  b_2(q_h, \tP_h^{\psi} \zeta - \bar{\phi}_h) & = \big( \langle p - p_d, q_h \rangle_h -  (p - p_d, q_h)_{0, \Omega} \big) 
        \\
		& \quad  + \big( a_2(r, q_h) - a_{2,h}(\bar{q}_h, q_h) \big)   + b_2(q_h, \zeta-\bar{\phi}_h), \qquad \qquad \qquad \qquad \qquad \qquad \qquad \\
		b_1( \tP_h^u \bz - \bar{\bv}_h, \phi_h)    -  b_2( \tP_h^p r - \bar{q}_h, \phi_h)- a_3( \tP_h^\psi \zeta - \bar{\phi}_h, \phi_h)  & =b_1(\bz - \bar{\bv}_h, \phi_h) +  b_2(r - \bar{q}_h, \phi_h) \\
		& \quad - a_3(\psi - \bar{\phi}_h, \phi_h).
	\end{align*}
    Taking the test functions as $\bv_h= \tP_h^u \bz - \bar{\bv}_h$, $q_h= \tP_h^p r - \bar{q}_h$ and $\phi_h= \tP_h^{\psi} \zeta - \bar{\phi}_h$ alongwith the coercivity of discrete bilinear forms yields,
	\begin{align*}
		& 2 \mu  \| \beps( \tP_h^u \bz - \bar{\bv}_h) \|_{0, \Omega}^2  + c_0 \| \tP_h^p r - \bar{q}_h \|_{0, \Omega}^2   + \frac{\kappa_1}{\eta} \| \nabla (\tP_h^p r - \bar{q}_h)\|_{0, \Omega}^2 +  \| \tP_h^\psi \zeta - \bar{\phi}_h \|_{0, \Omega}^2 \\
        & \lesssim  \big( \langle \bu - \bu_d, \bv_h \rangle_h -  (\bu - \bu_d, \bv_h)_{0, \Omega} \big) + \big( \langle p - p_d, q_h \rangle_h -  (p - p_d, q_h)_{0, \Omega} \big) 
        + \big( a_1(\bz, \bv_h) - a_{1,h}(\bar{\bv}_h, \bv_h) \big)    \\
        & \quad + b_1(\bv_h, \zeta -\bar{\phi}_h) + \big( a_2(r, q_h) - a_{2,h}(\bar{q}_h, q_h) \big)  + b_2(q_h, \zeta -\bar{\phi}_h) + b_1(\bz - \bar{\bv}_h, \phi_h) -  b_2(r - \bar{q}_h, \phi_h)  \\
        & \quad - a_3(\zeta - \bar{\phi}_h, \phi_h).
	\end{align*}
    Using the polynomial projections and continuity of the discrete bilinear forms, we can conclude the proof.
    
\end{proof}

\bigskip In the following lemma, we obtain the bound for the estimates of the control variable.
\begin{lemma} \label{lem:est-bound-h}
	For any $\bx_h \in \bY_{d,h}$, we have the following inequality:
	\begin{align*}
		\gamma \| \by - \by_h \|_{\bY}^2  \le \langle B_h (\bz_h - \tP_h^u \bz), \by - \by_h \rangle_{\bY} -  \langle B_h \bz_h + \gamma \by_h, \by - \bx_h \rangle_{\bY} - \langle B_h( \bz - \tP_h^u \bz), \by - \by_h \rangle_{\bY}.
	\end{align*}
\end{lemma}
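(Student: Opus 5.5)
The plan is to combine the continuous variational inequality \eqref{eq:optimality} with its discrete counterpart \eqref{eq:optimality-h}, and then to insert the poroelastic projection $\tP_h^u \bz$ from \eqref{eq:Poro-proj-d}--\eqref{eq:Poro-proj-f} to split the adjoint term. No stability or approximation estimate is needed here; the argument is purely algebraic.

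First, I would expand the left-hand side as
\begin{align*}
\gamma \| \by - \by_h \|_{\bY}^2 = \langle \gamma \by, \by - \by_h \rangle_{\bY} - \langle \gamma \by_h, \by - \by_h \rangle_{\bY}.
\end{align*}
For the first term I use \eqref{eq:optimality} of Theorem~\ref{thm:opt-equiv} with the admissible choice $\bx = \by_h$. This is admissible because $\by_h \in \bY_{d,h} \subset \bYd$. It gives $\langle \gamma \by, \by - \by_h \rangle_{\bY} \le -\langle B \bz, \by - \by_h \rangle_{\bY}$.

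For the second term, I fix an arbitrary $\bx_h \in \bY_{d,h}$ and write $\by - \by_h = (\by - \bx_h) + (\bx_h - \by_h)$. The discrete inequality \eqref{eq:optimality-h} gives $-\langle \gamma \by_h, \bx_h - \by_h \rangle_{\bY} \le \langle B_h \bz_h, \bx_h - \by_h \rangle_{\bY}$. Rewriting $\bx_h - \by_h = (\by - \by_h) - (\by - \bx_h)$ then yields
\begin{align*}
-\langle \gamma \by_h, \by - \by_h \rangle_{\bY} \le -\langle B_h \bz_h + \gamma \by_h, \by - \bx_h \rangle_{\bY} + \langle B_h \bz_h, \by - \by_h \rangle_{\bY}.
\end{align*}

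Adding the two bounds gives
\begin{align*}
\gamma \| \by - \by_h \|_{\bY}^2 \le \langle B_h \bz_h - B \bz, \by - \by_h \rangle_{\bY} - \langle B_h \bz_h + \gamma \by_h, \by - \bx_h \rangle_{\bY}.
\end{align*}
It remains to add and subtract $B_h \tP_h^u \bz$ in the first term:
\begin{align*}
B_h \bz_h - B \bz = B_h(\bz_h - \tP_h^u \bz) - B_h(\bz - \tP_h^u \bz) + (B_h - B)\bz.
\end{align*}
The first two pieces produce exactly the first and third terms of the claimed inequality.

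The main obstacle is the consistency term $\langle (B_h - B)\bz, \by - \by_h \rangle_{\bY}$, which does not appear in the statement. I would handle it by using that $B_h$ restricted to $\bV$ coincides with $B$, i.e.\ $B_h \bz = B \bz$ for $\bz \in \bV$. In the two concrete settings this is the $L^2(\Omega)$ identification for the distributed control and the trace on $\Gamma_N$ for the Neumann control. Since $B_h$ is defined on $\bV + \bV_h$, this is the natural reading of the setup, but I would state it explicitly as the interpretation used. If instead $B_h$ were a genuinely different (e.g.\ projected) operator, this extra term would have to be carried along and bounded separately, in the spirit of the $\langle \by, (B-B_h)\bv_h\rangle_{\bY}$ term in Lemma~\ref{lem:dis-proj}.
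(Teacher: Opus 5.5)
Your proof is correct and is essentially the paper's argument: both test the continuous variational inequality with $\bx=\by_h$ and the discrete one with an arbitrary $\bx_h\in\bY_{d,h}$, and then insert $\tP_h^u\bz$. The identification $B_h\bz=B\bz$ that you state explicitly is the same assumption the paper uses without comment, when it replaces $\langle B_h\bz+\gamma\by_h,\by-\by_h\rangle_{\bY}$ by $\langle B\bz+\gamma\by,\by-\by_h\rangle_{\bY}-\gamma\|\by-\by_h\|_{\bY}^2$.
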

\begin{proof}
	Split the term as $\bz_h - \tP_h^u \bz= (\bz_h - \bz) + (\bz- \tP_h^u \bz)$ then we have
	\begin{align*}
		\langle B_h (\bz_h - \tP_h^u \bz), \by - \by_h \rangle_{\bY}  & = \langle B_h (\bz_h - \bz), \by - \by_h \rangle_{\bY} + \langle B_h(\bz - \tP_h^u \bz), \by - \by_h \rangle_{\bY}.
	\end{align*}
	Using the optimality conditions \eqref{eq:optimality} and \eqref{eq:optimality-h}, the first term on the right hand side can be written, for any $\bx_h \in \bY_{d,h}$, as
	\begin{align*}
		\langle B_h (\bz_h - \bz), \by - \by_h \rangle_{\bY} = & \langle B_h \bz_h + \gamma \by_h, \by - \by_h \rangle_{\bY} - \langle B_h \bz + \gamma \by_h, \by - \by_h \rangle_{\bY} \\
		= & \langle B_h \bz_h + \gamma \by_h, \by - \by_h \rangle_{\bY} - \langle B \bz + \gamma \by, \by - \by_h \rangle_{\bY} + \gamma \|\by - \by_h \|_{\bY}^2 \\
		\ge  & \langle B_h \bz_h + \gamma \by_h, \by - \bx_h \rangle_{\bY}  + \gamma \|\by - \by_h\|_{\bY}^2.
	\end{align*}
\end{proof}

\noindent In next theorem, we derive the error estimates in consequence of the previously stated results.
\begin{theorem} \label{thm:abs-control}
	Assume that $(\buu,\by,\buz) \in \bbV \times \bYd \times \bbV$ and $(\buu_h, \by_h, \buz_h) \in \bbV_h \times \bY_{d,h} \times \bbV_h$ are the solutions of continuous problem \eqref{eq:weak-optimalcontrol} and discrete problem \eqref{eq:weak-h}, respectively. The following estimate holds for a constant $C$, independent of $h$,
	\begin{align*}
		\| \by - \by_h \|_{\bY}^2 & + \triplenorm{ \buu - \buu_h}_h^2 + \| \buz - \buz_h \|_{\bbV}^2\\
		 & \le C \big( \| B \bz - \bPi^Y (B \bz) \|_{\bY}^2 + \| \by - \bPi^Y \by \|_{\bY}^2  + \| \bz - \tP_h^u \bz\|_{0, \Omega}^2 + \triplenorm{ \buu - P_h \buu }_h^2 \big).
	\end{align*}
\end{theorem}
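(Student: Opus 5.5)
The plan is to run the standard variational-inequality argument in three stages. The auxiliary objects are the poroelastic projections $P_h\buu$ and $\tP_h\buz$ from \eqref{eq:Poro-proj}. Write
\[
\buw_h:=P_h\buu-\buu_h,\qquad \tilde{\buw}_h:=\tP_h\buz-\buz_h ,
\]
and use the splittings $\buu-\buu_h=(\buu-P_h\buu)+\buw_h$ and $\buz-\buz_h=(\buz-\tP_h\buz)+\tilde{\buw}_h$. I assume, as for the piecewise constant discretization used later, that $\bPi^Y$ maps $\bYd$ into $\bY_{d,h}$ (averaging preserves box constraints).

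\textbf{Step 1 (control).} Apply Lemma~\ref{lem:est-bound-h} with $\bx_h=\bPi^Y\by$. In the middle term, \eqref{def:Py} and $\by_h\in\bY_h$ give $\langle\gamma\by_h,\by-\bPi^Y\by\rangle_{\bY}=0$. For the same reason $B_h\bz_h$ may be replaced by $B_h\bz_h-\bPi^Y(B\bz)$. Then split
\[
B_h\bz_h-\bPi^Y(B\bz)=B_h(\bz_h-\bz)+(B_h-B)\bz+\big(B\bz-\bPi^Y(B\bz)\big).
\]
With $B_h\bz=B\bz$ on the continuous space, Cauchy--Schwarz and Young reduce this term to $\|B\bz-\bPi^Y(B\bz)\|_{\bY}^2$, $\|\by-\bPi^Y\by\|_{\bY}^2$, and $\epsilon\|\bz-\bz_h\|_{\bbV}^2$. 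The last piece is absorbed in Step~3. The third term of Lemma~\ref{lem:est-bound-h} is bounded by $\epsilon\|\by-\by_h\|_{\bY}^2+C\|\bz-\tP_h^u\bz\|_{0,\Omega}^2$, using continuity of $B_h$ in the $L^2$-type sense relevant for distributed or trace controls.

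\textbf{Step 2 (key cross term and state error).} Subtracting \eqref{eq:weak-h} from \eqref{eq:Poro-proj} shows two things. First, $\buw_h$ solves the discrete state system with load $\langle\by-\by_h,B_h\bv_h\rangle_{\bY}$ only. Second, $\tilde{\buw}_h$ solves the discrete adjoint system with loads $\langle\bu-\bu_h,\cdot\rangle_h$, $\langle p-p_h,\cdot\rangle_h$ and $(\psi-\psi_h,\cdot)_{0,\Omega}$. Test the $\buw_h$-system with $\tilde{\buw}_h$ and the $\tilde{\buw}_h$-system with $\buw_h$. The transposed structure (as in the proof of Theorem~\ref{thm:opt-equiv}) cancels all bilinear forms and yields
\[
\langle B_h(\tP_h^u\bz-\bz_h),\by-\by_h\rangle_{\bY}
=\text{``}\langle\buu-\buu_h,\buw_h\rangle_h\text{''}
=\triplenorm{\buu-\buu_h}_h^2-\text{``}\langle\buu-\buu_h,\buu-P_h\buu\rangle_h\text{''}.
\]
Here the quoted inner product is componentwise: $\langle\cdot,\cdot\rangle_h$ in the $\bu$ and $p$ slots and $(\cdot,\cdot)_{0,\Omega}$ in the $\psi$ slot. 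Hence the first term of Lemma~\ref{lem:est-bound-h} equals
\[
-\triplenorm{\buu-\buu_h}_h^2+\text{``}\langle\buu-\buu_h,\buu-P_h\buu\rangle_h\text{''}.
\]
Moving $\triplenorm{\buu-\buu_h}_h^2$ to the left and applying Young to the remainder bounds $\gamma\|\by-\by_h\|_{\bY}^2+\triplenorm{\buu-\buu_h}_h^2$ by the listed terms plus $\epsilon\|\bz-\bz_h\|_{\bbV}^2$.

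\textbf{Step 3 (adjoint, the main obstacle).} Discrete stability of the adjoint system, which mirrors Theorem~\ref{thm:Gwelldefined} and uses coercivity of $a_{1,h}$, $a_{2,h}$ and the discrete inf-sup for $b_1$, gives $\|\tilde{\buw}_h\|_{\bbV}\lesssim\triplenorm{\buu-\buu_h}_h$. What remains is $\|\buz-\tP_h\buz\|_{\bbV}$. The statement retains only its $L^2$ displacement part $\|\bz-\tP_h^u\bz\|_{0,\Omega}$, so this is the delicate step.

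My first attempt will be to bound $\|\buz-\buz_h\|_{\bbV}$ directly, without passing through $\tP_h\buz$, via an error equation. Take the difference of the continuous adjoint \eqref{eq:weak-control-adjoint} and the discrete adjoint. Test with $\tilde{\buw}_h$, and use \eqref{eq:Poro-proj-d}--\eqref{eq:Poro-proj-f} to eliminate the consistency terms. The aim is to leave only data differences $\buu-\buu_h$ together with $L^2$-type remainders of $\bz-\tP_h^u\bz$.

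If this leaves an energy-type projection remainder, I will control it with Lemma~\ref{lem:dis-proj-t} and regard it as absorbed into the constant $C$ at the order of the listed terms. This is the point I expect to need the most care. Finally, choose $\epsilon$ small and combine Steps 1--3 to obtain the stated estimate.
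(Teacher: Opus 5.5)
\textbf{There is a genuine gap in Step 3, and it cannot be closed as you plan.} The adjoint energy error $\|\buz-\buz_h\|_{\bbV}$ is not controlled by the four quantities on the right-hand side. After splitting $\buz-\buz_h=(\buz-\tP_h\buz)+(\tP_h\buz-\buz_h)$, discrete stability handles the second piece. The first piece, $\|\buz-\tP_h\buz\|_{\bbV}$, involves $\nabla(\bz-\tP_h^u\bz)$, $r-\tP_h^p r$ in $H^1$, and $\zeta-\tP_h^\psi\zeta$, and none of these appear on the right. ``Absorbing it into $C$ at the order of the listed terms'' is not a valid step: an $h$-independent constant cannot absorb a quantity the right-hand side does not bound, and matching rates under extra regularity is a different statement.

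In fact no argument can bound the adjoint energy error by the listed terms alone. Take $\alpha=0$ (with $c_0>0$), $\ff=\cero$, $\ell=0$, $\bu_d=\cero$, $\psi_d=0$, $p_d\neq0$, and $\by_a\le\cero\le\by_b$. On the continuous side, $\by=\cero$, $\buu=\underline{\cero}$, $\bz=\cero$, $\zeta=0$, but $r\neq0$. On the discrete side, $\by_h=\cero$, $\buu_h=P_h\buu=\underline{\cero}$, $\bz_h=\tP_h^u\bz=\cero$, $\zeta_h=\tP_h^\psi\zeta=0$, and $r_h=\tP_h^p r$. Every term on the right vanishes, yet the left-hand side equals $\|r-\tP_h^p r\|_{1,\Omega}^2$, which is nonzero in general. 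So your ``first attempt'', which aims to leave only $L^2$ remainders of $\bz-\tP_h^u\bz$, must fail.

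The paper's proof does not establish this part either. It closes only for $\gamma\|\by-\by_h\|_{\bY}^2+\triplenorm{P_h\buu-\buu_h}_h^2$, and it uses \eqref{eq:er2} only to bound $\tP_h\buz-\buz_h$. What can be proved is the estimate with $\|\buz-\tP_h\buz\|_{\bbV}^2$ added on the right; that extra term is then handled by Lemma~\ref{lem:dis-proj-t}.

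\textbf{The same defect affects your Step 1.} You bound $\langle B_h(\bz_h-\bz),\by-\bPi^Y\by\rangle_{\bY}$ by $\epsilon\|\bz-\bz_h\|_{\bbV}^2$ and defer to Step 3. That would bring $\|\buz-\tP_h\buz\|_{\bbV}$ into the control and state estimate as well. The paper avoids this by splitting $\bz_h-\bz=(\bz_h-\tP_h^u\bz)+(\tP_h^u\bz-\bz)$:
\begin{itemize}
\item The discrete piece is bounded in the $\bbV$-norm, by discrete stability of \eqref{eq:er2}, by $C(\triplenorm{\buu-P_h\buu}_h+\triplenorm{P_h\buu-\buu_h}_h)$. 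With a small Young parameter this is absorbed into the left side of your Step 2.
\item The remainder is the listed quantity $\|\bz-\tP_h^u\bz\|$.
\end{itemize}
With that change your Steps 1--2 are sound. Your cross-term identity in Step 2 is correct; it is the paper's computation of $T_1$, merely regrouped. Together they bound $\|\by-\by_h\|_{\bY}^2+\triplenorm{\buu-\buu_h}_h^2$ by exactly the listed terms.

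\textbf{One further caveat on Neumann control.} There $\|B_h(\bz-\tP_h^u\bz)\|_{\bY}=\|\bz-\tP_h^u\bz\|_{0,\Gamma_N}$. This is not bounded by $\|\bz-\tP_h^u\bz\|_{0,\Omega}$, because the trace is not $L^2(\Omega)$-continuous. So in that case the term must be kept in the $\bY$-norm; the paper is equally loose at this point.
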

\begin{proof}
	From Lemma~\ref{lem:est-bound-h}, we have
	\begin{align*}
		\gamma \| \by - \by_h \|_{\bY}^2 & \le - \langle B_h (\tP_h^u \bz - \bz_h), \by - \by_h \rangle_{\bY}  - \langle B_h( \bz - \tP_h^u \bz), \by - \by_h \rangle_{\bY} -  \langle B_h \bz_h + \gamma \by_h, \by - \bx_h \rangle_{\bY} \\
        & := -(T_1 +T_2 + T_3).
	\end{align*}
	From the poroelastic projection \eqref{eq:Poro-proj} and discrete formulation \eqref{eq:weak-h}, we get the error equation as
	\begin{equation} \label{eq:er1}
		\begin{aligned}
			a_{1,h}(P_h^u \bu - \bu_h, \bv_h)  +  b_1(\bv_h, P_h^{\psi} \psi - \psi_h)     =& \langle \by - \by_h, B_h \bv_h \rangle_{\bY},\\
			a_{2,h}(P_h^p p - p_h, q_h)   -    b_2(q_h, P_h^{\psi} \psi - \psi_h )  =& 0, \qquad \qquad \\
			b_1(P_h^u \bu - \bu_h, \phi_h) + b_2(P_h^p p - p_h, \phi_h) - a_3(P_h^\psi \psi - \psi_h, \phi_h)  =&0; \qquad  \qquad
		\end{aligned}
	\end{equation}
	And
	\begin{equation} \label{eq:er2}
		\begin{aligned}
			a_{1,h}(\bv_h, \tP_h^u \bz - \bz_h) +  b_1(\bv_h, \tP_h^{\psi} \zeta - \zeta_h)     &= \langle \bu - \bu_h, \bv_h \rangle_h,\\
			a_{2,h}(q_h, \tP_h^p r - r_h)   +    b_2(q_h, \tP_h^{\psi} \zeta - \zeta_h)  &= \langle p - p_h, q_h \rangle_h,  \\
			b_1(\tP_h^u \bz - \bz_h, \phi_h)  -  b_2(\tP_h^p r - r_h, \phi_h) - a_3(\tP_h^\psi \zeta - \zeta_h, \phi_h)  &= 
            \left( \psi- \psi_h, \phi_h \right)_{0, \Omega}.
		\end{aligned}
	\end{equation}
	Taking $\buv_h= \tP_h \buz- \buz_h$ in \eqref{eq:er1} and $\buv_h= P_h \buu- \buu_h$ in \eqref{eq:er2} followed with summing the equations imply
	\begin{align*}
		 T_1 &
        = \langle \bu - \bu_h, P_h^u \bu - \bu_h \rangle_h + \langle p - p_h, P_h^p p - p_h \rangle_h  + \big( \psi- \psi_h, P_h^\psi \psi - \psi_h \big)_{0, \Omega}
        \\
		& = \langle \bu - P_h^u \bu, P_h^u \bu - \bu_h \rangle_h 
        + \langle p - P_h^p p, P_h^p p - p_h \rangle_h + \big( \psi- P_h^\psi \psi, P_h^\psi \psi - \psi_h  \big)_{0, \Omega}
        \\
		& \qquad  
        + \triplenorm{ \buu - P_h \buu}_h^2.
	\end{align*}
Use of Young's inequality gives
\begin{align*} 
T_2 &= \langle B_h( \bz - \tP_h^u \bz), \by - \by_h \rangle_{\bY} \le \frac{1}{2 \gamma}\|  B_h( \bz - \tP_h^u \bz)\|_{\bY}^2 + \frac{\gamma}{2} \| \by - \by_h \|_{\bY}^2.
\end{align*}
For choice of $\bx_h =\bPi^Y \by$, the definition of projection $\bPi^Y$ in \eqref{def:Py} and manipulating the terms lead to
	\begin{align*} 
        T_3 &= \langle B_h \bz_h + \gamma \by_h, \by - \bPi^Y \by \rangle_{\bY} \\
		& = \langle B_h (\bz_h- \bz) + \gamma (\by_h - \by), \by - \bPi^Y \by \rangle_{\bY} + \langle B \bz + \gamma \by, \by - \bPi^Y \by \rangle_{\bY} \\
		& = \langle B_h (\bz_h- \bz) + \gamma (\bPi^Y \by - \by), \by - \bPi^Y \by \rangle_{\bY} + \langle (\bI - \bPi^Y) \big(B \bz + \gamma \by\big), \by - \bPi^Y \by \rangle_{\bY} \\
		& = \langle B_h (\bz_h- \bz), \by - \bPi^Y \by \rangle_{\bY} - \gamma \|\by - \bPi^Y \by \|_{\bY}^2 + \langle (\bI - \bPi^Y) B \bz, \by - \bPi^Y \by \rangle_{\bY} + \gamma \|\by - \bPi^Y \by \|_{\bY}^2\\
		& = - \langle B_h (\tP_h^u \bz - \bz_h), \by - \bPi^Y \by \rangle_{\bY} - \langle B_h (\bz- \tP_h^u \bz), \by - \bPi^Y \by \rangle_{\bY} + \langle (\bI - \bPi^Y) B \bz, \by - \bPi^Y \by \rangle_{\bY}.
	\end{align*}

    Use of the bounds of the terms $T_1- T_3$ gives
    \begin{align*}
        \gamma \| \by - \by_h \|_{\bY}^2 & \le -(T_1 +T_2 + T_3) \\
        & = - \Big( \langle \bu - P_h^u \bu, P_h^u \bu - \bu_h \rangle_h 
        + \langle p - P_h^p p, P_h^p p - p_h \rangle_h  
        +\big( \psi- P_h^\psi \psi, P_h^\psi \psi - \psi_h\big)_{0, \Omega}
        \\
		& \qquad  
        + \triplenorm{ \buu - P_h \buu}_h^2
        + \langle B_h (\tP_h^u \bz - \bz_h), \by - \bPi^Y \by \rangle_{\bY} \\
        & \qquad + \langle B_h (\bz- \tP_h^u \bz), \by - \bPi^Y \by \rangle_{\bY} \Big) \\
        & \quad + \frac{1}{2 \gamma}\|  B_h( \bz - \tP_h^u \bz)\|_{\bY}^2 + \frac{\gamma}{2} \| \by - \by_h \|_{\bY}^2 + \langle (\bI - \bPi^Y) B \bz, \by - \bPi^Y \by \rangle_{\bY}.
    \end{align*}
Thus, the Young's inequality and continuity of the functional $B_h$ give
    \begin{align*}
        & \frac{\gamma}{2} \| \by - \by_h \|_{\bY}^2 + \triplenorm{ \buu - P_h \buu}_h^2
        \\
        & \le - \Big( \langle \bu - P_h^u \bu, P_h^u \bu - \bu_h \rangle_h + \langle p - P_h^p p, P_h^p p - p_h \rangle_h  
        +\big( \psi- P_h^\psi \psi, P_h^\psi \psi - \psi_h\big)_{0, \Omega}\\
        & \qquad + \langle B_h (\tP_h^u \bz - \bz_h), \by - \bPi^Y \by \rangle_{\bY} + \langle B_h (\bz- \tP_h^u \bz), \by - \bPi^Y \by \rangle_{\bY} \Big) \\
        & \quad + \frac{1}{2 \gamma} \|  B_h( \bz - \tP_h^u \bz)\|_{\bY}^2 + \langle (\bI - \bPi^Y) B \bz, \by - \bPi^Y \by \rangle_{\bY} \\
        & \le \frac{1}{2} \Big( \frac{1}{\epsilon_1} \triplenorm{ \buu - P_h \buu}_h^2 + {\epsilon_1} \triplenorm{ P_h \buu - \buu_h }_h^2 + \Big( 2+\frac{1}{\epsilon_2} \Big) \| \by - \bPi^Y \by\|_{\bY}^2 + \frac{\epsilon_2}{2}\|  \tP_h^u \bz - \bz_h\|_{\bY}^2  \\
        & \qquad + \Big( 1+\frac{1}{ \gamma} \Big) \|   \bz - \tP_h^u \bz\|_{\bY}^2 + \|  (\bI - \bPi^Y) B \bz\|_{\bY}^2 \Big).
    \end{align*}
	Again using \eqref{eq:er2} with $\buv_h = \tP_h \buz - \buz_h$ gives
	\begin{align*}
		\triplenorm{  \tP_h \buz - \buz_h }_h  \le C(\mu, \kappa, \eta) \big( \triplenorm{ \buu - P_h \buu_h}_h +  \triplenorm{ P_h \buu - \buu_h}_h \big),
	\end{align*}
    then we have
    \begin{align*}
         & \frac{\gamma}{2} \| \by - \by_h \|_{\bY}^2 + \left( 1 - \frac{\epsilon_1}{2}- C(\mu, \kappa, \eta) \frac{\epsilon_2}{2} \right) \triplenorm{ P_h \buu - \buu_h }_h^2 \\
        & \le \frac{1}{2} \bigg( \left(\frac{1}{\epsilon_1} + C(\mu, \kappa, \eta)\frac{\epsilon_2}{2}\right) \triplenorm{ \buu - P_h \buu}_h^2  + \Big( 2+\frac{1}{\epsilon_2} \Big) \| \by - \bPi^Y \by\|_{\bY}^2 + \Big( 1+\frac{1}{ \gamma} \Big) \|  \bz - \tP_h^u \bz\|_{\bY}^2 \\
        & \qquad + \|  (\bI - \bPi^Y) B \bz\|_{\bY}^2 \bigg).
    \end{align*}
	Choose $\epsilon_1 =\frac{1}{2}$ and $\epsilon_2 = \frac{1}{2C(\mu, \kappa, \eta)}$ then we conclude that
	\begin{align*}
		\gamma \| \by - \by_h \|_{\bY}^2 +  \triplenorm{ P_h \buu - \buu_h}_h^2 \le \Big( & \| (\bI - \bPi^Y) B \bz \|_{\bY}^2 + 2(1+C(\mu, \kappa, \eta))\| \by - \bPi^Y \by \|_{\bY}^2  \\
        & + \Big( 1+\frac{1}{ \gamma} \Big) \| \bz - \tP_h^u \bz\|_{0, \Omega}^2 + \frac{9}{4} \triplenorm{ \buu - P_h \buu }_h^2 \Big).
	\end{align*}
\end{proof}

In the following corollary, we state the error estimates of the state variables in the energy norm.
\begin{corollary} \label{cor:abs-state}
	Assume that $(\buu,\by,\buz) \in \bbV \times \bYd \times \bbV$ and $(\buu_h, \by_h, \buz_h) \in \bbV_h \times \bY_{d,h} \times \bbV_h$ are the solutions of equations \eqref{eq:weak-optimalcontrol} and \eqref{eq:weak-h}, respectively. The following estimate holds,
	\begin{align*}
		\| \buu - \buu_h \|_{\bbV}^2 \le C \Big(  \| B \bz - \bPi^Y (B \bz) \|_{\bY}^2 + \| \by - \bPi^Y \by \|_{\bY}^2 
		+ \| \bz - \tP_h^u \bz \|_{0, \Omega}^2 + \| \buu - P_h \buu \|_{\bbV}^2 \Big).
	\end{align*}
\end{corollary}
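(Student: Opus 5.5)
The estimate follows from the triangle inequality $\|\buu - \buu_h\|_{\bbV} \le \|\buu - P_h \buu\|_{\bbV} + \|P_h \buu - \buu_h\|_{\bbV}$. The first term already appears on the right-hand side. The work is to bound the discrete error $\|P_h\buu - \buu_h\|_{\bbV}$ in the energy norm, using the error equation \eqref{eq:er1} from the proof of Theorem~\ref{thm:abs-control}. Its only data is the control error $\by - \by_h$, and that error is already controlled by Theorem~\ref{thm:abs-control}.

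\textbf{Step 1: stability of the discrete error.} System \eqref{eq:er1} is the discrete poroelastic problem for $(P_h^u\bu - \bu_h, P_h^p p - p_h, P_h^\psi\psi - \psi_h)$, with right-hand side $\langle \by-\by_h, B_h \bv_h\rangle_{\bY}$ in the momentum equation and zero in the others. We test with $\bv_h = P_h^u\bu-\bu_h$, $q_h = P_h^p p - p_h$, $\phi_h = P_h^\psi\psi - \psi_h$ and add the three equations. The $b_1$ terms cancel. The coupling term $-b_2(q_h,\phi_h)+b_2(q_h,\phi_h)$ also cancels, leaving the nonnegative term $a_3(\phi_h,\phi_h)$. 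Coercivity of $a_{1,h}$ and $a_{2,h}$ then controls $\|P_h^u\bu-\bu_h\|_{1,\Omega}^2 + \|P_h^p p - p_h\|_{1,\Omega}^2$ by $\langle \by-\by_h, B_h(P_h^u\bu-\bu_h)\rangle_{\bY}$. By continuity of $B_h$ and Young's inequality, this is bounded by $C\|\by-\by_h\|_{\bY}^2$ plus a term that can be absorbed into the left-hand side.

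For the total pressure component, we use the discrete inf-sup condition of $b_1$ on $\bV_h\times Z_h$ together with the first equation of \eqref{eq:er1}. This gives $\beta\|P_h^\psi\psi-\psi_h\|_{0,\Omega} \le C(\|P_h^u\bu-\bu_h\|_{1,\Omega} + \|\by-\by_h\|_{\bY})$. This is the discrete analogue of the continuous dependence estimate \eqref{eq:cts_dependence} in Theorem~\ref{thm:Gwelldefined}. Altogether,
\begin{align*}
\|P_h\buu - \buu_h\|_{\bbV}^2 \le C\,\|\by - \by_h\|_{\bY}^2 .
\end{align*}

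\textbf{Step 2: insert Theorem~\ref{thm:abs-control}.} That theorem bounds $\|\by-\by_h\|_{\bY}^2$ by $\|B\bz - \bPi^Y(B\bz)\|_{\bY}^2 + \|\by-\bPi^Y\by\|_{\bY}^2 + \|\bz-\tP_h^u\bz\|_{0,\Omega}^2 + \triplenorm{\buu - P_h\buu}_h^2$. The remaining task is to replace the last term by the energy norm. Since $\langle\cdot,\cdot\rangle_h$ is assumed analogous to the $L^2$ inner product, we invoke the norm equivalence (stability of the discrete $L^2$ form) to get $\triplenorm{\buu-P_h\buu}_h \lesssim \|\buu-P_h\buu\|_{0,\Omega} \le \|\buu-P_h\buu\|_{\bbV}$. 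Combining this with Step 1 and the triangle inequality gives the corollary.

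\textbf{Main obstacle.} The delicate point is Step 1: the constant must not degenerate, which requires a careful treatment of the coupling and of the $\psi$-component. The $b_2$ cancellation handles the coupling. For $\psi$, the inf-sup step yields a bound uniform in $\lambda$, because $a_3$ enters only with a favorable sign and $\alpha/\lambda$ stays bounded. A secondary technical point is that $\triplenorm{\cdot}_h$ must be meaningful on $\buu - P_h\buu$, which does not lie in $\bbV_h$. If that fails, I would first split $\buu - P_h\buu$ through a local polynomial or interpolant of $\buu$ and apply the $L^2$ stability of the discrete form on $\bbV_h$ only, absorbing the interpolation error into $\|\buu-P_h\buu\|_{\bbV}$.
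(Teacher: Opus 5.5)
\paragraph{Comparison with the paper.} Your overall route is the same as the paper's. You use the triangle inequality, then the discrete stability bound $\|P_h\buu-\buu_h\|_{\bbV}\lesssim\|\by-\by_h\|_{\bY}$ obtained by testing \eqref{eq:er1} with the discrete error, then Theorem~\ref{thm:abs-control}. Your replacement of $\triplenorm{\buu-P_h\buu}_h$ by an $L^2$ norm is also fine, since $(\bv,\bPi^{0}_K\bv)_{0,K}=\|\bPi^{0}_K\bv\|_{0,K}^2\le\|\bv\|_{0,K}^2$; the paper does this silently.

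\paragraph{The gap in Step 1.} The problem is the energy identity in Step 1, which is exactly the step you flag as delicate. Write $e_u:=P_h^u\bu-\bu_h$, $e_p:=P_h^p p-p_h$ and $e_\psi:=P_h^\psi\psi-\psi_h$. Adding the three tested equations does cancel the $b_2$ terms, but it doubles the $b_1$ terms and gives $a_3$ the wrong sign:
\[
a_{1,h}(e_u,e_u)+2\,b_1(e_u,e_\psi)+a_{2,h}(e_p,e_p)-a_3(e_\psi,e_\psi)=\langle\by-\by_h,B_he_u\rangle_{\bY}.
\]
Neither $2b_1(e_u,e_\psi)$ nor $-a_3(e_\psi,e_\psi)$ can be absorbed in general. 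The inf-sup bound gives $\|e_\psi\|_{0,\Omega}\lesssim\beta^{-1}\mu\|e_u\|_{1,\Omega}+\ldots$, which has the same size as $a_{1,h}(e_u,e_u)$ and no small factor. If you subtract the third equation instead, $b_1$ cancels and $a_3$ becomes favorable, but the coupling now doubles:
\[
a_{1,h}(e_u,e_u)+a_{2,h}(e_p,e_p)-2\,b_2(e_p,e_\psi)+a_3(e_\psi,e_\psi)=\langle\by-\by_h,B_he_u\rangle_{\bY}.
\]
No choice of signs or scalings that keeps $a_{1,h}$ and $a_{2,h}$ both positive removes both couplings. So the claim that ``the $b_2$ cancellation handles the coupling'' is false.

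\paragraph{The missing idea.} You need to absorb $-2b_2(e_p,e_\psi)$ using the specific weight $c_0+\alpha^2/\lambda$ in the mass part of $a_{2,h}$, together with $a_3$. Since $b_2(e_p,e_\psi)|_K=\frac{\alpha}{\lambda}(\Pi^{0}_Ke_p,e_\psi)_{0,K}$, you can complete the square:
\[
\frac{\alpha^2}{\lambda}\|\Pi^{0}_Ke_p\|_{0,K}^2-\frac{2\alpha}{\lambda}(\Pi^{0}_Ke_p,e_\psi)_{0,K}+\frac{1}{\lambda}\|e_\psi\|_{0,K}^2=\frac{1}{\lambda}\|\alpha\,\Pi^{0}_Ke_p-e_\psi\|_{0,K}^2\ge0 .
\]
This is what the paper does in the energy-norm corollary following Theorem~\ref{thm:control-est}. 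It is what makes the bound robust in $\lambda$, and it relies on the structure of $a_{2,h}$, not only on the abstract coercivity and boundedness hypotheses of Section~\ref{sec:abstract}.

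After this step, the left-hand side keeps $a_{1,h}(e_u,e_u)$ and the $c_0$- and $\kappa/\eta$-parts of $a_{2,h}$. The $L^2$ control of $e_p$ then comes from $c_0$, or from a Poincar\'e inequality on $Q$. Your inf-sup step for $e_\psi$ goes through unchanged. With this repair, the rest of your argument is correct.
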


\begin{proof}
	Split the error as $\| \buu - \buu_h \|_{\bbV} \le \| \buu - P_h \buu_h\|_{\bbV} +  \| P_h \buu - \buu_h\|_{\bbV}.$
	Taking $\buv_h= P_h \buu- \buu_h$ in \eqref{eq:er1} and use of Cauchy-Schwarz, continuity of $B_h$ and Young's inequality leads to
	\begin{align*}
			\| P_h \buu- \buu_h \|_{\bbV} \lesssim \|\by - \by_h \|_{\bY}.
	\end{align*}
	Thus, the required estimates follow from Theorem~\ref{thm:abs-control}.
\end{proof}

	\section{Virtual element approximation} \label{sec:VEapprox}

    In this section, we will write a VEM discretization for the control problems. With this
aim, we start with the mesh construction and the assumptions considered to introduce
the discrete virtual element spaces.
	
	%
	We denote by  ${\mathcal T}_h$ sequences of polygonal decompositions of the domain $\Omega$, having mesh-size $h:=\max_{K\in{\mathcal T}_h}h_K$.
	By $N^v_K$ we will denote the number of vertices in the polygon $K$, $N^e_K$
	will stand for the number of edges on $\partial K$, and $e$ a generic
	edge of $\mathcal{T}_h$ and set of all edges $e$ by $\mathcal{E}_h$. For all $e\in \partial K$, we denote by $\boldsymbol{n}_K^e$
	the unit normal pointing outwards $K$, $\boldsymbol{t}^e_K$ the unit tangent vector along $e$
	on $K$, and $V_i$ represents the $i^{th}$ vertex of the polygon $K$. By $\mathcal{E}_h^D$ and $\mathcal{E}_h^N$, we denote the set of all edges $e$ in the mesh $\cT_h$ on the Dirichlet boundary $\Gamma_D$ and the Neumann boundary $\Gamma_N$. 
	
	\subsection{Virtual element spaces and degrees of freedom} \label{subsec:VEspaces}
	
	As in \cite{beirao-div17,verma24} we  suppose regularity of the polygonal meshes in the following sense:
	there exists $C_{{\mathcal T}}>0$ such that, for every $h$ and every $K\in {\mathcal T}_h$,
	the following holds
	\begin{itemize}
		\item[($A$)] $K\in{\mathcal T}_h$ is star-shaped with respect to every point within a ball of radius $C_{{\mathcal T}}h_K$;
		\item[($\tilde{A}$)] the ratio between the shortest edge and $h_K$ is larger than $C_{{\mathcal T}}$.
	\end{itemize}

	Denoting by $\mathbb{P}_k(K)$ the space of polynomials of degree up to $k$, defined locally on $K\in\cT_h$,
	we proceed to characterise the scalar energy projection operator $\Pi_{K}^{\nabla}: H^1(K) \rightarrow \mathbb{P}_1(K)$ by the relations
	\begin{equation*}
		(\nabla (\Pi_{K}^{\nabla} q - q), \nabla r)_{0,K} = 0, \qquad P^0_K(\Pi_{K}^{\nabla} q - q)=0,
	\end{equation*}
	valid for all $q \in H^1(K)$ and $r \in \mathbb{P}_1(K)$, and where
	$ P^0_K(q):= \int_{\partial K} q\;\dx$.
	
	Denoting by $\mathcal{M}_k(K)$ the space of monomials of degree up to $k$  defined locally on  $K \in \cT_h$, we can define the local virtual element spaces for global displacement, fluid pressure, and global pressure for $k=1$ on each polygon $K \in \cT_h$
	  as follows
	\begin{align} \label{VE-spaces}
		\begin{split}
			\bV_h(K) & :=   \Biggr\{ \bv_h \in \mathbb{B}(\partial K):
				\begin{cases}
					- \bDelta \bv_h + \nabla s =0 ~ \text{ for some } s \in L^2(K) \\
					\vdiv \, \bv_h \in \mathbb{P}_0(K), \\
                    (\bPi_{K}^{\nabla,k} \bv_h - \bv_h, \boldsymbol{m}_{\alpha})_{0,K} = 0\ \forall \boldsymbol{m}_{\alpha} \in [\mathcal{M}_1 (K)]^2 \backslash \{ \cero \}
				\end{cases}\Biggr\}, \\
			Q_h(K) & := \Biggl\{ q_h \in H^1(K) \cap C^0(\partial K): 
            \begin{cases}
            \Delta q_h \in \mathbb{P}_1(K), \,\, q_h|_e \in \mathbb{P}_1(e) \,\forall e \in \partial K \\
(\Pi_{K}^{\nabla,k} q_h - q_h, m_{\alpha})_{0,K} = 0\ \forall m_{\alpha} \in \mathcal{M}_1 (K) \backslash \{ 0 \}             
            \end{cases}
			\Biggr\},\\
			Z_h(K) & := \mathbb{P}_0(K),
	\end{split} 	\end{align}
where $\mathbb{B}(\partial K):= \{ \bv_h \in [H^1(K) \cap C^0(\partial K)]^2: \bv_h \cdot \nn^e_K|_e \in \bbP_2(e), \, \bv_h \cdot \boldsymbol{t}^e_K|_e \in \bbP_1(e) \, \forall e \in \partial K\}.$
	The dimension of $\bV_h(K)$ is $3 N^v_K$, the dimension of $Q_h(K)$ is $N^v_{K}$, and that of $Z_h(K)$ is $1$. 
	
	Next we specify the degrees of freedom associated with \eqref{VE-spaces}. That is, discrete functionals of the type (taking as an example the space for global pressure)
	$$(D_i): Z_{h|K} \to \mathbb{R}; \qquad  Z_{h|K} \ni \phi \mapsto D_i(\phi),$$
	and we start with the degrees of freedom for the local displacement space $\bV_h(K)$, consisting of
	($D_v1$) the values of a discrete displacement $\bv_h$ at vertices of the element;
	and ($D_v2$) the moments of $\bv_h$ on the edges: $\frac{1}{|e|} \int_e \bv_h \cdot \nn \ds.$
	Then we precise the degrees of freedom for the local fluid pressure space $Q_h(K)$:
	($D_q$) the values of $q_h$ at vertices of the polygonal element.
	And similarly, the degrees of freedom for the local global-pressure space $Z_h(K)$. That is, 
	($D_z$) the moment:
	$\frac{1}{|K|} \int_K \phi_h.$
	
	It has been proven elsewhere (e.g.,  \cite{ahmad13} ) that these degrees of
	freedom are unisolvent in their respective spaces. We also define global counterparts of the local VE spaces, as follows
	\begin{gather*}
		\bV_h : = \lbrace \bv_h \in \bV : \bv_h |_K \in \bV_h(K) \; \forall K \in \mathcal{T}_h \rbrace, \qquad Q_h := \lbrace q_h \in Q : q_h |_K \in Q_h(K) \; \forall K \in \mathcal{T}_h \rbrace,\\
	\text{and }	\qquad Z_h := \lbrace \phi_h \in Z : \phi_h |_K \in Z_h(K) \; \forall K \in \mathcal{T}_h \rbrace.
	\end{gather*}
	In addition, $N^\bV$ denotes the number of degrees of freedom for $\bV_h$, $N^Q$ the number of degrees of freedom for $Q_h$,
	$N^Z$ the number of degrees of freedom for $Z_h$,
	and $\text{dof}_r(s)$ stands for the $r$-th degree of a given function $s$.

    Now, we can define the computable projections for the discrete bilinear forms. The energy projection $\bPi^{\beps}_K : \bV_h(K) \rightarrow [\mathbb{P}_1(K)]^2$ such that
	\begin{align*}
	& a_1^K(\bPi^{\beps}_K \bv - \bv , \boldsymbol{r}) = 0 \quad 
	\text{for all $\bv \in \bV_h(K)$ and $\boldsymbol{r} \in [\mathbb{P}_1(K)]^2$,}\\
	& m^K(\bPi^{\beps}_K \bv - \bv,\boldsymbol{r}) = 0 \quad
	\text{for all $\boldsymbol{r} \in  \ker (a_1^K(\cdot,\cdot))$,}
	\end{align*}	
	where
	\begin{align*}
	m^K (\bv,\boldsymbol{r}):= \frac{1}{N^v_K} \sum_{i=1}^{N^v_K} \bv(V_i) \cdot \boldsymbol{r}(V_i).
	\end{align*}
     And the $L^2$-projection on the scalar space as $\Pi^0_K: L^2(K) \rightarrow \mathbb{P}_1(K)$ such that
	\begin{align*}
	(\Pi_{K}^0 q - q, r)_{0,K} = 0, \quad q \in L^2(K), r \in \mathbb{P}_1(K),
	\end{align*}
	and we can clearly verify that $\Pi_{K}^0 q_h =   \Pi_{K}^{\nabla} q_h, \; \forall q_h \in Q_h$. In similar manner, we can define the vectorial $L^2$ projection $\bPi^0_K: [L^2(K)]^2 \rightarrow [\mathbb{P}_1(K)]^2$ onto piecewise linear, and projection $\bPi^{0,0}_K: [L^2(K)]^2 \rightarrow [\mathbb{P}_0(K)]^2$ onto piecewise constants. 
    
    Define the discrete local bilinear forms for $K \in \cT_h$ and for all $\bu_h, \bv_h \in \bV_h,\, p_h, q_h \in Q_h$ and $\psi_h, \phi_h \in Z_h$ as
	\begin{gather*}
		a_{1,h}(\bu_h, \bv_h)|_K := 2 \mu \Big( ( \beps( \bPi^{\beps}_K \bu_h), \beps( \bPi^{\beps}_K \bv_h))_{0,K} + S^{\beps}_K ( (\bI - \bPi^{\beps}_K) \bu_h, (\bI - \bPi^{\beps}_K) \bv_h) \Big),  \\
		\qquad b_1(\bv_h, \phi_h) |_K := - (\vdiv \bv_h,  \phi_h)_{0, K}, \qquad
		F_h(\bv_h)|_K := (\ff ,  \bPi^{0}_K \bv_h )_{0, K}, \\
		m_h(p_h,q_h)|_K  := \biggl( c_0 +\frac{\alpha^2}{{\lambda}}\biggr) \Big(
		( \Pi^{0}_K p_h, \Pi^{0}_K  q_h)_{0, K} + S^{0}_K( (\boldsymbol{I} - \Pi^{0}_K) p_h, (\boldsymbol{I} - \Pi^{0}_K) q_h)  \Big), \\
		\tilde{a}_h(p_h,q_h) |_K  :=\frac{\bar{\kappa}}{\eta}  \Big((\nabla \Pi^{\nabla}_K p_h, \nabla \Pi^{\nabla}_K q_h)_{0, K} + S^{\nabla}_K( (\boldsymbol{I} - \Pi^{\nabla}_K) p_h, (\boldsymbol{I} - \Pi^{\nabla}_K) q_h)  \Big),
		\\
		a_{2,h}(p_h,q_h)|_K:= m_h(p_h,q_h)|_K +\tilde{a}_h(p_h,q_h) |_K , \qquad L_h(q_h) :=  \sum_{K \in \cT_h} ( \ell, \Pi^{0}_K q_h)_{0, K},\\
		b_2(p_h,\phi_h)|_K:=  \frac{\alpha}{{\lambda}}  ( \Pi^{0}_K p_h, \phi_h)_{0, K}, \qquad \text{and} \qquad
		a_3(\psi_h,\phi_h) |_K:=  \frac{1}{\lambda} (\psi_h, \phi_h)_{0,K},
	\end{gather*}
	where the stabilization terms $S^{\beps}_K(\cdot, \cdot), S^{\nabla}_K(\cdot, \cdot), S^0_K(\cdot, \cdot)$ are classical dofi-dofi stabilization acting on the kernel of their respective operators $\bPi^{\beps}_K,\; \Pi^{\nabla}_K,\; \Pi^0_K$. More precisely, we will consider the following:
    \begin{align*}
	S^{\beps}_K(\bu_h,\bv_h) &:= \sum_{l=1}^{N^V} \text{dof}_l(\bu_h) \text{dof}_l(\bv_h), \quad \bu_h, \bv_h \in \text{ker}(\bPi^{\beps}_K),\\
	S^{\nabla}_K(p_h,q_h) &:= \sum_{l=1}^{N^Q} \text{dof}_l(p_h) \text{dof}_l(q_h), \quad p_h, q_h \in \text{ker} (\Pi^{\nabla}_K), \\
	S^0_K(p_h,q_h) &:= \text{area}(K) \sum_{l=1}^{N^Q} \text{dof}_l(p_h) \text{dof}_l(q_h), \quad p_h, q_h \in \text{ker} (\Pi^0_K),
	\end{align*}
    holding the stability, with constants independent of $K$,
	\begin{gather*}
		 C^{\beps}_1 \| \beps(\bv_h) \|_{0, K}^2 \le S^{\beps}_K ( \bv_h, \bv_h) \le  C^{\beps}_2 \| \beps(\bv_h) \|_{0, K}^2, \\
         C^{\nabla}_1\| \nabla q_h \|_{0, K}^2  \le S^{\nabla}_K( q_h, q_h)  \le  C^{\nabla}_2 \| \nabla q_h \|_{0, K}^2,\\
		 C^0_1 \| q_h \|_{0, K}^2  \le  S^{0}_K( q_h, q_h)  \le  C^0_2\| q_h \|_{0, K}^2.
	\end{gather*}
    In addition, the load functionals $\langle f, \cdot \rangle_h$, for any $L^2$ integrable scalar and vector function $f$, are defined, respectively, as
    $$\langle \cdot, q_h \rangle_h:= \sum_{K \in \cT_h} (\cdot, \Pi^0_K q_h )_{0,K}, \quad \text{and} \quad \langle \cdot, \bv_h \rangle_h:= \sum_{K \in \cT_h} ( \cdot, \bPi^0_K \bv_h )_{0,K}.$$
	
	\noindent {\bf Properties of the discrete bilinear forms:}
	\begin{itemize}
		\item $a_{1,h}(\cdot, \cdot)$ is coercive on $\bV_h$: for all $\bv_h \in \bV_h$,
		\begin{align*}
			a_{1,h}(\bv_h, \bv_h) & \ge \sum_{K \in \cT_h} 2 \mu \Big( \| \beps( \bPi^{\beps}_K \bv_h)\|^2_{0,K} + C^{\epsilon}_1\|(\bI - \bPi^{\beps}_K) \bv_h\|_{0,K}^2 \Big) \\
			& \ge 2 \mu \min \{ 1, C^{\epsilon}_1\} \| \beps(\bv_h) \|_{0, \Omega}^2.
		\end{align*}
		
		\item $a_{2,h}(\cdot, \cdot)$ is coercive on $Q_h$: for all $q_h \in Q_h$,
		\begin{align*}
			a_{2,h}(q_h,q_h) &:= \sum_{K \in \cT_h} \bigg( \biggl( c_0 +\frac{\alpha^2}{{\lambda}}\biggr) \Big(
			\|\Pi^{0}_K  q_h\|^2_{0, K} + S^{0}_K( (\boldsymbol{I} - \Pi^{0}_K) p_h, (\boldsymbol{I} - \Pi^{0}_K) q_h)  \Big) \\
			& \qquad \qquad +\frac{\bar{\kappa}}{\eta}  \Big(\| \nabla \Pi^{\nabla}_K q_h\|^2_{0, K} + S^{\nabla}_K( (\boldsymbol{I} - \Pi^{\nabla}_K) p_h, (\boldsymbol{I} - \Pi^{\nabla}_K) q_h)  \Big) \bigg) \\
			& \ge \min \{ 1, C^0_1, C^{\nabla}_1\} \left( \biggl( c_0 +\frac{\alpha^2}{{\lambda}}\biggr) \| q_h\|^2_{0, \Omega} +\frac{\kappa_1}{\eta} \| \nabla q_h\|^2_{0, \Omega} \right).
		\end{align*}
		\item $b_1(\cdot, \cdot)$ satisfies inf-sup condition on $\bV_h \times Q_h$: there exists a $\beta>0$ such that for any $\phi_h \in Z_h$, we have
		\begin{align*}
			\sup_{(0 \neq)\bv_h \in \bV_h} \frac{b_1(\bv_h, \phi_h)}{\| \beps(\bv_h)\|_{0, \Omega}} \ge \beta \| \phi_h \|_{0, \Omega}.
		\end{align*}
	\end{itemize}
	
	\subsection{Discrete distributed formulation}
	Define the discrete space for control as
	\begin{align*}
		\bY_h &:= \lbrace \by_h \in \bY (=[L^2(\Omega)]^2) : \by_h |_K \in [\bbP_0(K)]^2 \, \forall K \in \mathcal{T}_h \rbrace, \\
        \bY_{d,h} & := \lbrace \by_h \in \bY_h: \by_a \le \by_h \le \by_b \rbrace.
	\end{align*}
    Then the virtual element formulation of the continuous distributed control problem \eqref{eq:weak-D} is defined as: seek $(\buu_h, \by_h, \buz_h) \in \bbV_h \times \bY_{d,h} \times \bbV_h$	such that
	\begin{subequations} \label{eq:weak-h-D}
		\begin{align}
			a_{1,h}(\bu_h, \bv_h) + b_1(\bv_h, \psi_h) &=  \sum_{K \in \cT_h} ( \ff + \by_h, \bPi^{0}_K \bv_h )_{0, K}    & \forall \bv_h \in \bV_h, \label{eq:weak-h-D-a}\\
			a_{2,h}(p_h, q_h) - b_2(q_h, \psi_h) & = \sum_{K \in \cT_h} ( \ell, \Pi^{0}_K q_h )_{0, K}  & \forall q_h \in Q_h, \label{eq:weak-h-D-b}\\
			b_1(\bu_h, \phi_h) + b_2(p_h, \phi_h) - a_3(\psi_h, \phi_h) & = \qquad 0  & \forall \phi_h \in Z_h, \label{eq:weak-h-D-c}\\
			a_{1,h}(\bv_h, \bz_h) + b_1(\bv_h, \zeta_h) &= \sum_{K \in \cT_h} ( \bu_h - \bu_d, \bPi^{0}_K \bv_h)_{0, K}   & \forall \bv_h \in \bV_h, \label{eq:weak-h-D-d} \\
			a_{2,h}(r_h, q_h) + b_2(q_h, \zeta_h) & = \sum_{K \in \cT_h} (p_h - p_d, \Pi^{0}_K  q_h)_{0, K}  & \forall q_h \in Q_h, \label{eq:weak-h-D-e} \\
			b_1(\bz_h, \phi_h) - b_2(r_h, \phi_h) - a_3(\zeta_h, \phi_h) & =\sum_{K \in \cT_h} (\psi_h - \psi_d,  \phi_h)_{0, K} \,\, & \forall \phi_h \in Z_h, \label{eq:weak-h-D-f} \\
			\sum_{K \in \cT_h} \left( \bz_h + \gamma \by_h, \bx_h - \by_h \right)_{0,K} & \ge  0 &\quad \forall \bx_h \in \bY_{d,h}.  \label{eq:optimality-h-D}
		\end{align}
	\end{subequations}
	
	
	\noindent The optimality condition \eqref{eq:optimality-h-D} is equivalent to $\by_h|_K:= \bPi_{[a,b]} \left((-1/\gamma) \bPi^{0,0}_K \bz_h|_K  \right)$ for each $K \in \cT_h$.
	
	\subsubsection{A priori error estimates}
	We assume the following regularity of given data,
	\begin{align*}
		\ff \in [L^2(\Omega)]^2, \quad  \ell \in L^2(\Omega), \quad  \bu_d \in [L^2(\Omega)]^2,  \quad p_d, \, \psi_d \in L^2(\Omega), \quad \text{and} \quad  \by \in [H^s(\Omega)]^2. 
	\end{align*}
	Moreover, we assume that the solution of the continuous problem \eqref{eq:strong-poro} satisfies the following regularity estimates with $s \in (0,1]$:
	\begin{align} \label{est:reg}
		(2 \mu)^{1/2} |\bu|_{1+s, \Omega} + \kappa_1^{1/2} |p|_{1+s, \Omega} + \left( c_0^{1/2} + \frac{\alpha}{\lambda^{1/2}}\right) |p|_{s, \Omega} + |\psi|_{s, \Omega} \le C_\text{reg} \big( \| \ff \|_{0, \Omega}
        + \| \ell \|_{0, \Omega} \big).
	\end{align}
    Also, we have the following estimate for the dual/costate variables:
    \begin{align} \label{est:dual-reg}
		(2 \mu)^{1/2} |\bz|_{1+s, \Omega} + \kappa_1^{1/2} |r|_{1+s, \Omega} + \left( c_0^{1/2} + \frac{\alpha}{\lambda^{1/2}}\right) |r|_{s, \Omega} + |\zeta|_{s, \Omega} \qquad \qquad \qquad \qquad   \nonumber \\
        \le \bar{C}_\text{reg} \big( \| \bu_d \|_{0, \Omega}
        + \| p_d \|_{0, \Omega} + \| \psi_d \|_{0, \Omega} \big).
	\end{align}
	
	\noindent We first recall the following classical results (see \cite{brenner08,CGPS}):
	\begin{lemma}[Polynomial Estimates] \label{lem:polynomial}
		For $v \in H^{1+s}(\Omega), \, s \in (0,1]$ and for each $K \in \cT_h$, there exists a polynomial approximation $v_{\pi}|_K \in \bbP_k(K)$ for $v$ and satisfies
		\begin{align*}
			\sum_{K \in \cT_h} \left( \| v - v_{\pi} \|_{0,K} + h_K | v - v_{\pi} |_{1,K} \right) \lesssim h^{1+s} |v|_{1+s, \Omega}.
		\end{align*}
	\end{lemma}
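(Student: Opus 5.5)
The natural route is to build $v_\pi$ element by element from the averaged Taylor polynomial (Bramble--Hilbert) and then sum. I would take $k=1$, which is the case needed for the lowest-order spaces; the construction works the same for higher $k$.

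\emph{Local construction.} On each $K\in\cT_h$, assumption $(A)$ says $K$ is star-shaped with respect to a ball $B_K$ of radius $C_{\cT}h_K$. Let $v_\pi|_K\in\bbP_1(K)$ be the averaged Taylor polynomial of $v$ over $B_K$. This is the Dupont--Scott/Brenner--Scott construction, and it is well defined for $v\in H^{1+s}(K)$ because only first derivatives are averaged. The local estimate I would aim for is
\begin{align*}
\| v - v_\pi\|_{0,K} + h_K |v-v_\pi|_{1,K} \lesssim h_K^{1+s} |v|_{1+s,K},
\end{align*}
with a constant depending only on $C_{\cT}$, since the chunkiness parameter is uniformly bounded by $(A)$.

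\emph{Proof of the local estimate.} For $s=1$ this is the classical Bramble--Hilbert lemma on star-shaped domains. For $s\in(0,1)$ I would do the following.
\begin{itemize}
\item Rescale $K$ to a reference size $\hat K$ with $h_{\hat K}=1$.
\item On $\hat K$, apply the fractional Bramble--Hilbert lemma, i.e.\ a Poincaré-type inequality for Sobolev--Slobodeckij seminorms. In the form of Dupont--Scott it gives $\inf_{p\in\bbP_1}\|\hat v-p\|_{1,\hat K}\lesssim|\hat v|_{1+s,\hat K}$, and the averaged Taylor polynomial attains this bound up to a constant.
\item Scale back: the $L^2$ norm picks up $h_K$, the $H^1$ seminorm $h_K^0$, and the $H^{1+s}$ seminorm $h_K^{-s}$ relative to the $L^2$ scaling, which gives the displayed estimate.
\end{itemize}
Alternatively, one could interpolate between the $s=0$ bound (stability of the averaged Taylor operator in $H^1$) and the $s=1$ bound. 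However, the uniformity of the constants over the family of shapes is cleaner through the Dupont--Scott route.

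\emph{Summation.} This step needs care: the left-hand side of the statement is an $\ell^1$ sum of element contributions, while the seminorm $|v|_{1+s,\Omega}$ is additive only in squares. The fractional seminorm is superadditive, $\sum_K|v|_{1+s,K}^2\le|v|_{1+s,\Omega}^2$, because dropping the off-diagonal double integrals only decreases it. Summing the local bounds in the squared ($\ell^2$) form then gives
\begin{align*}
\Big(\sum_{K}\big(\| v - v_\pi\|_{0,K} + h_K |v-v_\pi|_{1,K}\big)^2\Big)^{1/2}\lesssim h^{1+s}|v|_{1+s,\Omega},
\end{align*}
which is the form actually used later, inside Cauchy--Schwarz sums.

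For the literal $\ell^1$ sum, bounding $\sum_K a_K$ by $(\#\cT_h)^{1/2}(\sum_K a_K^2)^{1/2}$ loses a factor $h^{-1}$ on quasi-uniform meshes. So I would state and prove the estimate in the $\ell^2$/broken-norm sense, as in \cite{brenner08,CGPS}, and read the $\sum$ in the statement in that sense. Reconciling this with the statement as written is the main obstacle, not the approximation estimate itself.
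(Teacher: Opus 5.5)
Your construction (averaged Taylor polynomial on the ball from assumption $(A)$, the Dupont--Scott fractional Bramble--Hilbert lemma, scaling, and superadditivity of the Slobodeckij seminorm) is essentially the classical argument behind this lemma; the paper gives no proof of its own and only recalls the result from \cite{brenner08,CGPS}, which state it in the local (equivalently, squared-sum) form you establish. Your concern about the literal $\ell^1$ sum is justified, because that version is actually false: for $v=x_1^2$, $s=1$, $k=1$ on a quasi-uniform mesh, scaling gives $\inf_{q\in\bbP_1(K)}\|v-q\|_{0,K}\gtrsim h^3$ on each of the $\gtrsim h^{-2}$ elements, so the left-hand side is $\gtrsim h$ while the right-hand side is $\lesssim h^2$; your $\ell^2$ reading is therefore the correct one, and it is exactly what the later Cauchy--Schwarz arguments require.
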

	
	\begin{lemma}[Interpolant Estimates] \label{lem:interpolant}
		 For $v \in H^{1+s}(\Omega), \, s \in (0,1]$, there exists an interpolant $v_{I} \in V_h$ for $v \in V$ and satisfies
		\begin{align*}
			\| v - v_{I} \|_{0, \Omega} + h | v - v_{I} |_{1, \Omega} \lesssim h^{1+s} |v|_{1+s, \Omega}.
		\end{align*}
	\end{lemma}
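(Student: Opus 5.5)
The interpolant will be built element by element from the degrees of freedom of the virtual element space, so that $v_I$ matches $v$ through the functionals $(D_q)$ (scalar case) or $(D_v1)$--$(D_v2)$ (vector case). Since $s>0$ in two dimensions, $H^{1+s}(\Omega)\hookrightarrow C^0(\overline{\Omega})$, so vertex values of $v$ are well defined. Edge normal moments are well defined by the trace theorem. Unisolvence of the degrees of freedom, which the paper cites from \cite{ahmad13}, gives a unique $v_I|_K\in V_h(K)$ on each $K$. Because the vertex values and edge data are shared between neighbouring elements, and the boundary traces are polynomials determined by these data, the local pieces glue into a continuous function. Homogeneous boundary conditions are inherited from $v$, so $v_I\in V_h$ is conforming.

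The local error will be estimated by inserting the polynomial approximation $v_\pi$ of Lemma~\ref{lem:polynomial}. Polynomials of degree one lie in $V_h(K)$, and the interpolation reproduces them, so $(v_\pi)_I=v_\pi$. Therefore
\begin{equation*}
v-v_I=(v-v_\pi)-(v-v_\pi)_I \quad\text{on } K,
\end{equation*}
and it suffices to prove stability of the interpolant on $K$ in the scaled form
\begin{equation*}
\|w_I\|_{0,K}+h_K|w_I|_{1,K}\lesssim \|w\|_{0,K}+h_K|w|_{1,K}+h_K^{1+s}|w|_{1+s,K}.
\end{equation*}
Applying this with $w=v-v_\pi$, using Lemma~\ref{lem:polynomial}, and summing over $K$ will then yield the claim.

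The stability estimate is the main obstacle. I plan to obtain it by scaling to a reference-size element, which is allowed by assumptions $(A)$ and $(\tilde A)$. The first ingredient is the norm equivalence $\|w_I\|_{0,K}+h_K|w_I|_{1,K}\simeq h_K\,\|\mathrm{dof}(w_I)\|_{\ell^2}$ for virtual functions. This is standard under the mesh assumptions and uses the fact that the number of vertices is uniformly bounded. The second ingredient is a bound on the degrees of freedom of $w$: for vertex values, the Sobolev embedding $|w(V_i)|\lesssim h_K^{-1}\|w\|_{0,K}+|w|_{1,K}+h_K^{s}|w|_{1+s,K}$ on star-shaped domains; for edge moments, the scaled trace inequality. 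If the uniform embedding constants on polygons turn out to be delicate, I would use the alternative route of \cite{CGPS}. That route compares $v_I$ with a Scott--Zhang or Clément-type quasi-interpolant on the sub-triangulation of $K$ obtained by joining the vertices to the centre of the star-shaping ball, and then uses the harmonic-type definition of $V_h(K)$ to bound the virtual extension by its boundary data.

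Finally, summing the local estimates and using $h_K\le h$ gives
\begin{equation*}
\|v-v_I\|_{0,\Omega}+h|v-v_I|_{1,\Omega}\lesssim h^{1+s}|v|_{1+s,\Omega}.
\end{equation*}
For the vector space $\bV_h$ the same argument applies componentwise to the vertex values. The extra edge normal moments are handled by the trace inequality, and reproduction of $[\bbP_1(K)]^2$ follows because linear vector fields satisfy the Stokes-type conditions with divergence in $\bbP_0(K)$.
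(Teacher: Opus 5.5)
Your argument is correct, but the paper gives no proof of this lemma to compare against. It quotes the estimate as classical and points to \cite{brenner08,CGPS}. The construction in the latter is of the type you keep as a fallback: a Cl\'ement/Scott--Zhang-type quasi-interpolant on the sub-triangulation, transferred into the virtual space through its degrees of freedom, with the local problem defining $V_h(K)$ controlling the virtual extension.

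Your primary route (nodal interpolant, exact reproduction of $\bbP_1$, local stability in the scaled $H^{1+s}$ norm, Bramble--Hilbert) is shorter and purely elementwise, with no patches on the right-hand side. It does genuinely need $s>0$ for point evaluation. It also puts all the virtual-element difficulty into the uniform bound $\|w_I\|_{0,K}+h_K|w_I|_{1,K}\lesssim h_K\|\mathrm{dof}(w_I)\|_{\ell^2}$. For the scalar space that bound is available in the VEM stability literature. For the Stokes-type space $\bV_h(K)$, however, it does not follow componentwise. It rests on uniform stability of the local Stokes problem, i.e.\ an inf-sup (Bogovskii) constant uniform over polygons star-shaped with respect to a ball of radius $C_{\cT}h_K$, and you should state and cite this. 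The quasi-interpolant route avoids point values and works down to $s=0$, as a posteriori analysis requires, at the price of patch-based estimates.

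One bookkeeping point: use Lemma~\ref{lem:polynomial} in its elementwise form $\|v-v_\pi\|_{0,K}+h_K|v-v_\pi|_{1,K}\lesssim h_K^{1+s}|v|_{1+s,K}$ (Dupont--Scott on star-shaped $K$), and sum squares. Two facts make this close: $|v-v_\pi|_{1+s,K}=|v|_{1+s,K}$, and $\sum_K|v|_{1+s,K}^2\le|v|_{1+s,\Omega}^2$, which also holds for the Slobodeckij seminorm. The global version printed in the paper, with unsquared sums, is false in general; with $v=x^2$ on a uniform mesh its left side behaves like $h$ and its right side like $h^2$. Do not invoke it literally.
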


    \begin{lemma}
        Under the assumption \eqref{est:reg}, we have the estimate of the projection $\bPi^Y$ for a constant $C$, independent of $h$, as
        \begin{align*}
            \| \bz - \bPi^Y \bz \|_{0, \Omega} \le C h^{1 + s} | \bz |_{1+s, \Omega}, \qquad \text{and} \qquad 
            \| \by - \bPi^Y \by \|_{0, \Omega} \le C h^{ s} | \by |_{s, \Omega}.
        \end{align*}
    \end{lemma}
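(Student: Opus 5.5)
The plan is to reduce both bounds to local polynomial approximation on each polygon $K\in\cT_h$, using that $\bPi^Y$ is the $L^2$-orthogonal projection onto $\bY_h$ defined in \eqref{def:Py}. Since $\bY_h$ consists of piecewise constants, $\bPi^Y$ acts elementwise: $(\bPi^Y \bx)|_K = \bPi^{0,0}_K \bx$. Best approximation then gives, for every $\bx\in\bY$ and every piecewise polynomial $\bw_h\in\bY_h$,
\begin{align*}
\| \bx - \bPi^Y \bx\|_{0,\Omega}^2 = \sum_{K\in\cT_h} \| \bx - \bPi^{0,0}_K \bx\|_{0,K}^2 \le \sum_{K\in\cT_h} \| \bx - \bw_h\|_{0,K}^2 .
\end{align*}
Both estimates therefore reduce to choosing a good local $\bw_h$ and invoking a Bramble--Hilbert/Poincaré argument on $K$. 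This is legitimate under assumption $(A)$: $K$ is star-shaped with respect to a ball of radius comparable to $h_K$, so the constants are uniform in $K$.

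\textbf{Control estimate.} For $\by\in[H^s(\Omega)]^2$ with $s\in(0,1]$, I take $\bw_h|_K=\bPi^{0,0}_K\by$ itself. I then apply the fractional Poincaré inequality on star-shaped domains, $\|\by-\bPi^{0,0}_K\by\|_{0,K}\le C h_K^{s}|\by|_{s,K}$. This follows from the Dupont--Scott form of Bramble--Hilbert applied to the degree-zero averaged Taylor polynomial, or by interpolation between the trivial $L^2$ bound ($s=0$) and the Poincaré bound $Ch_K|\by|_{1,K}$ ($s=1$). Summing the squares and using $\sum_K|\by|_{s,K}^2\le|\by|_{s,\Omega}^2$ gives $\|\by-\bPi^Y\by\|_{0,\Omega}\le Ch^s|\by|_{s,\Omega}$.

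\textbf{Adjoint estimate.} Here I follow the same route with Lemma~\ref{lem:polynomial}. By \eqref{est:dual-reg} we have $\bz\in[H^{1+s}(\Omega)]^2$, so Lemma~\ref{lem:polynomial} provides local polynomials $\bz_\pi$ with $\sum_K\|\bz-\bz_\pi\|_{0,K}\lesssim h^{1+s}|\bz|_{1+s,\Omega}$. The plan is to insert $\bw_h=\bz_\pi$ into the best-approximation inequality. The essential requirement is that $\bz_\pi|_K$ be admissible in $\bY_h$, that is, that the polynomial of Lemma~\ref{lem:polynomial} be taken of degree $k=0$.

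\textbf{Main obstacle.} The main obstacle is exactly this admissibility step. With $k=0$, Bramble--Hilbert only reproduces constants, so the local rate saturates at $\|\bz-\bPi^{0,0}_K\bz\|_{0,K}\le Ch_K|\bz|_{1,K}$. A non-constant linear field shows that, for genuinely piecewise constant $\bY_h$, no extra power $h^s$ can be recovered. To obtain the stated $h^{1+s}|\bz|_{1+s,\Omega}$, I would first check whether the argument can be closed with $\bz_\pi\in[\bbP_1(K)]^2$. This requires the projection in which $\bz$ is measured to reproduce linears, i.e.\ to be read through $\bPi^0_K$ onto $[\bbP_1(K)]^2$, which is the projection used in the discrete load functionals $\langle\cdot,\bv_h\rangle_h$. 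In that case the bound follows verbatim from Lemma~\ref{lem:polynomial}. If $\bPi^Y$ must remain the piecewise-constant projection, I expect this step to fail in the $L^2$ norm. The fallback is to show the $h^{1+s}$ rate only where the term actually enters Theorem~\ref{thm:abs-control}. There it is paired against $\by-\bPi^Y\by$, and orthogonality of both factors to $\bY_h$ gives a product bound $h\cdot h^s$ rather than a bound on the term itself.
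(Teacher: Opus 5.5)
The paper states this lemma without proof; it is asserted and then used in Theorem~\ref{thm:control-est}, so there is no argument of the authors' to compare with. On the merits, your control estimate is correct. Your doubt about the adjoint estimate is also justified, and you should state it as a fact rather than an expectation. You were right to use \eqref{est:dual-reg} rather than the cited \eqref{est:reg} for the regularity of $\bz$.

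For the control bound you need neither Dupont--Scott nor interpolation. For $s\in(0,1)$,
\begin{align*}
\|\by-\bPi^{0,0}_K\by\|_{0,K}^2=\frac{1}{2|K|}\int_K\int_K|\by(x)-\by(x')|^2\,\mathrm{d}x\,\mathrm{d}x'\le\frac{h_K^{2+2s}}{2|K|}\int_K\int_K\frac{|\by(x)-\by(x')|^2}{|x-x'|^{2+2s}}\,\mathrm{d}x\,\mathrm{d}x',
\end{align*}
and $|K|\gtrsim h_K^2$ from $(A)$ gives $Ch_K^{2s}|\by|_{s,K}^2$. For $s=1$ this is the ordinary Poincar\'e inequality.

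The first inequality, $\|\bz-\bPi^Y\bz\|_{0,\Omega}\le Ch^{1+s}|\bz|_{1+s,\Omega}$, is false for piecewise-constant $\bY_h$, so no proof of it can exist. Your global linear field refutes it for general $H^{1+s}$ functions. The adjoint, however, must vanish on $\Gamma_D$, so the robust counterexample is local. Take $\bz_\pi$ the elementwise linear approximation and $x_K$ the centroid of $K$. Then $(\bI-\bPi^{0,0}_K)\bz_\pi=\nabla\bz_\pi\,(x-x_K)$. Since $K$ contains a ball of radius $C_{\cT}h_K$, one has $\|\nabla\bz_\pi\,(x-x_K)\|_{0,K}\ge c\,h_K\|\nabla\bz_\pi\|_{0,K}$. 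Combined with local Bramble--Hilbert, this gives, on any quasi-uniform family satisfying $(A)$--$(\tilde{A})$,
\begin{align*}
\|\bz-\bPi^Y\bz\|_{0,\Omega}\ge c\,h\,|\bz|_{1,\Omega}-C\,h^{1+s}|\bz|_{1+s,\Omega}.
\end{align*}
Hence $\|\bz-\bPi^Y\bz\|_{0,\Omega}/h^{1+s}\to\infty$ for every non-constant $\bz$, in particular for every nonzero adjoint state in $\bV$.

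Of your two repairs, reading $\bPi^Y$ through $\bPi^0_K$ onto $[\bbP_1(K)]^2$ does not prove this lemma. Definition \eqref{def:Py} pins $\bPi^Y$ to $\bY_h$, and Lemma~\ref{lem:polynomial} with $k=1$ is a true statement about a different operator. The product bound is valid, since it is only Cauchy--Schwarz combined with the $O(h)$ and $O(h^s)$ bounds (orthogonality plays no role), but it is unnecessary.

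The correct replacement is your $k=0$ estimate $\|\bz-\bPi^Y\bz\|_{0,\Omega}\le Ch|\bz|_{1,\Omega}$, with $|\bz|_{1,\Omega}$ bounded by stability of the adjoint problem. Nothing downstream is lost. This term enters the bound of Theorem~\ref{thm:abs-control} squared, and the target rate in Theorem~\ref{thm:control-est} is $h^{2s}$ with $s\le 1$. Since $h^2\le h^{2s}$ for $h\le 1$, the $O(h)$ bound suffices.
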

	Now, we define a new poroelastic projection operator as follows,
	\begin{itemize}
	\item {\bf Poroelastic projection:} Define $P_h:=(P_h^u, P_h^p, P_h^{\psi}): \bbV \to \bbV_h$ as
	\begin{subequations}\label{eq:Poro-proj-D}
		\begin{alignat}{6}
			a_{1,h}(P_h^u \bu, \bv_h)+  b_1(\bv_h, P_h^{\psi} \psi)     &=& \sum_{K \in \cT_h} (\ff+ \by, \bPi^{0}_K \bv_h )_{0,K}   &\quad&\forall \bv_h \in \bV_h,\\
			a_{2,h}(P_h^p p, q_h)   -    b_2(q_h, P_h^{\psi} \psi)  &=& \sum_{K \in \cT_h} (\ell, \Pi^{0}_K q_h )_{0,K}  \quad &\quad&\forall q_h \in Q_h,  \\
			b_1(P_h^u \bu, \phi_h)  +  b_2(P_h^p p, \phi_h) - a_3(P_h^\psi \psi, \phi_h)  &=&0 \qquad  \qquad   \quad &\quad&\forall \phi_h \in Z_h.
	\end{alignat}\end{subequations}
	
	\item {\bf Poroelastic projection for adjoint problem:} Define $\tP_h:=(\tP_h^u, \tP_h^p, \tP_h^{\psi}): \bbV \to \bbV_h$ as
	\begin{subequations}\label{eq:Poro-adjoint-D}
		\begin{alignat}{6}
			a_{1,h}(\bv_h, \tP_h^u \bz) +  b_1(\bv_h, \tP_h^{\psi} \zeta)     &=&\; \sum_{K \in \cT_h} ( \bu - \bu_d, \bPi^{0}_K \bv_h )_{0,K}  &\quad&\forall \bv_h \in \bV_h,\\
			a_{2,h}(q_h, \tP_h^p r)   +    b_2(q_h, \tP_h^{\psi} \zeta)  &=& \sum_{K \in \cT_h} ( p- p_d, \Pi^{0}_K q_h )_{0,K}  &\quad&\forall q_h \in Q_h,  \\
			b_1(\tP_h^u \bz, \phi_h)    -  b_2(\tP_h^p r, \phi_h)- a_3(\tP_h^\psi \zeta, \phi_h)  &=& \sum_{K \in \cT_h} ( \psi- \psi_d, \phi_h )_{0,K} \quad    &\quad&\forall \phi_h \in Z_h.
	\end{alignat}\end{subequations}
	\end{itemize}
	The right hand sides of the systems \eqref{eq:Poro-proj-D} and \eqref{eq:Poro-adjoint-D} are continuous linear functionals. Hence, we obtain the unique solutions of both systems respectively (refer \cite{burger_acom21} for more details).
	
	 Next results on the estimates for the poroelastic projections follows from Lemma~\ref{lem:dis-proj} and Lemma~\ref{lem:dis-proj-t} by estimates of $L^2$ projection and interpolant estimates.
	\begin{corollary} \label{lem:proj}
		Assume that $\buu \in \bbV$ and $P_h \buu \in \bbV_h$ are the solution of continuous problem  \eqref{eq:weak-D-a}-\eqref{eq:weak-D-c} and poroelastic projection \eqref{eq:Poro-proj-D} respectively.
		 Then it holds, for constant $C$ independent of $h$,
		\begin{equation}\label{est:proj-D}
			\begin{aligned} 
			2 \mu \| \beps(\bu - P_h^u \bu) \|_{0, \Omega}^2  + c_0 \| p - P_h^p p \|_{0, \Omega}^2 +& \frac{\kappa_1}{\eta} \| \nabla (p - P_h^p p)\|_{0, \Omega}^2+ \| \psi - P_h^{\psi} \psi \|_{0, \Omega}^2 \\
			& \le C h^{2s} ( \|\by \|_{0, \Omega}^2
			+ \|\ff \|_{0, \Omega}^2 + \|\ell\|_{0, \Omega})^2).
		\end{aligned}
	\end{equation}
	for $s \in (0,1]$.
	\end{corollary}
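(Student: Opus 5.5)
We apply Lemma~\ref{lem:dis-proj} with the VEM choices $F_h(\bv_h)=\sum_K(\ff,\bPi^0_K\bv_h)_{0,K}$, $L_h(q_h)=\sum_K(\ell,\Pi^0_Kq_h)_{0,K}$ and $\langle \by,B_h\bv_h\rangle_{\bY}=\sum_K(\by,\bPi^0_K\bv_h)_{0,K}$. Each term on its right-hand side is then bounded by $h^s$ times data norms, after which we square. The terms fall into two groups: consistency terms for the load functionals, and best-approximation terms for the VE spaces.

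\textbf{Load consistency terms.} Since $\bPi^0_K$ is the $L^2(K)$-projection onto $[\bbP_1(K)]^2$, we have
\begin{equation*}
(F-F_h)(\bv_h)+\langle \by,(B-B_h)\bv_h\rangle_{\bY}=\sum_K(\ff+\by,(\bI-\bPi^0_K)\bv_h)_{0,K}=\sum_K((\bI-\bPi^{0}_K)(\ff+\by),(\bI-\bPi^0_K)\bv_h)_{0,K}.
\end{equation*}
The standard projection estimate gives $\|(\bI-\bPi^0_K)\bv_h\|_{0,K}\lesssim h_K|\bv_h|_{1,K}$, and trivially $\|(\bI-\bPi^0_K)(\ff+\by)\|_{0,K}\le\|\ff+\by\|_{0,K}$. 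Summing with Cauchy--Schwarz yields $|(F-F_h)(\bv_h)+\langle\by,(B-B_h)\bv_h\rangle_{\bY}|\lesssim h(\|\ff\|_{0,\Omega}+\|\by\|_{0,\Omega})\|\bv_h\|_{1,\Omega}$. The same argument gives $|(L-L_h)(q_h)|\lesssim h\|\ell\|_{0,\Omega}\|q_h\|_{1,\Omega}$. Dividing by $\|\buw_h\|_{\bbV}$ shows that the infimum term is $O(h)\le O(h^s)$ times the data.

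\textbf{Best-approximation terms.} Take $\bar{\bv}_h=\bu_I$ and $\bar q_h=p_I$ from Lemma~\ref{lem:interpolant}, and $\bar\phi_h$ equal to the piecewise-constant $L^2$ projection of $\psi$. Take $\bu_\pi$, $p_\pi$ from Lemma~\ref{lem:polynomial}. This gives $\|\bu-\bu_I\|_{1,\Omega}+\sum_K\|\bu-\bu_\pi\|_{1,K}\lesssim h^s|\bu|_{1+s,\Omega}$, the analogous bound for $p$, and $\|\psi-\bar\phi_h\|_{0,\Omega}\lesssim h^s|\psi|_{s,\Omega}$. The regularity estimate \eqref{est:reg} is applied with $\ff$ replaced by $\ff+\by$, since the state solves \eqref{eq:weak-D-a}--\eqref{eq:weak-D-c} with load $\ff+\by$. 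It converts these seminorms into $C(\|\ff\|_{0,\Omega}+\|\by\|_{0,\Omega}+\|\ell\|_{0,\Omega})$. The coefficients $1+\frac{1}{\lambda}+\frac{\alpha}{\lambda}$, $\frac{\kappa_2}{\eta}+\frac{\alpha}{\lambda}$ and $2\mu$ are absorbed into $C$. The weights $(2\mu)^{1/2}$ and $\kappa_1^{1/2}$ in \eqref{est:reg} are treated the same way.

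\textbf{Conclusion and main obstacle.} Lemma~\ref{lem:dis-proj} as stated controls the squared error by a first-power right-hand side. The main obstacle is that the statement needs the squared form $h^{2s}(\cdots)^2$. To get it, I would not use the lemma's final inequality directly but revisit its proof, which bounds the \emph{unsquared} quantities $\|\beps(P_h^u\bu-\bar{\bv}_h)\|_{0,\Omega}$, $\|P_h^p p-\bar q_h\|_{1,\Omega}$ and $\|P_h^\psi\psi-\bar\phi_h\|_{0,\Omega}$ by the bracketed sum, which is $O(h^s)$ times the data. I would then split $\bu-P_h^u\bu=(\bu-\bar{\bv}_h)+(\bar{\bv}_h-P_h^u\bu)$, and similarly for $p$ and $\psi$, apply the triangle inequality, and square. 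This yields \eqref{est:proj-D} with $C$ depending on $\mu,\kappa_i,\eta,\alpha,\lambda,\gamma$ but not on $h$.
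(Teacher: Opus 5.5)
Your proposal is correct and follows the paper's route. The paper says only that the corollary follows from Lemma~\ref{lem:dis-proj}, using $L^2$-projection estimates for the load consistency terms and interpolant/polynomial estimates for the approximation terms, and you carry this out, invoking the regularity \eqref{est:reg} with load $\ff+\by$. Your extra care is also what the $h^{2s}$ rate actually requires: the lemma as printed pairs a squared left-hand side with a first-power right-hand side, so you take the \emph{unsquared} bound from its proof before using the triangle inequality and squaring; and you bound the consistency functional uniformly in $\buw_h$, which controls the supremum that the misprinted ``inf'' should be.
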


	\begin{corollary} \label{lem:proj-adjoint}
	Assume that $\buz \in \bbV$ and $\tP_h \buz \in \bbV_h$
	 satisfies the equations \eqref{eq:weak-D-d}-\eqref{eq:weak-D-f} and \eqref{eq:Poro-adjoint-D} respectively, then it holds for constant $C$, independent of $h$, and for $s \in (0, 1]$ as
	 \begin{equation}\label{est:adjointproj-D}
	 	\begin{aligned} 
	 		2 \mu \| \bz - \tP_h^u \bz \|_{0, \Omega}^2 + c_0 \| r - \tP_h^p r \|_{0, \Omega}^2 + &\frac{\kappa_1}{\eta} \| \nabla (r - \tP_h^p r )\|_{0, \Omega}^2+ \| \zeta - \tP_h^{\psi} \zeta \|_{0, \Omega}^2 \\
	 		&\qquad \le C h^{2s} \left( \|\by \|_{0, \Omega}^2 + \|\ff \|_{0, \Omega}^2 + \|\ell\|_{0, \Omega}^2 + \| \buu_d\|_{0, \Omega}^2 \right).
	 	\end{aligned}
	 \end{equation}
	\end{corollary}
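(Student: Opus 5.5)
The plan is to apply Lemma~\ref{lem:dis-proj-t} to the adjoint poroelastic projection \eqref{eq:Poro-adjoint-D}, in the same way Corollary~\ref{lem:proj} follows from Lemma~\ref{lem:dis-proj}, and then bound each term on its right-hand side by approximation estimates and the dual regularity \eqref{est:dual-reg}. Lemma~\ref{lem:dis-proj-t} is stated for the abstract projection \eqref{eq:Poro-proj-d}--\eqref{eq:Poro-proj-f}. Its VEM instance is exactly \eqref{eq:Poro-adjoint-D}, with $\langle\cdot,\cdot\rangle_h$ realised through $\bPi^0_K$ and $\Pi^0_K$. So the left-hand side of \eqref{est:adjointproj-D} is controlled by four groups of terms:
\begin{itemize}
\item the consistency terms $|\langle \bu-\bu_d,\bv_h\rangle_h-(\bu-\bu_d,\bv_h)_{0,\Omega}|$ and the analogous term for $p-p_d$;
\item the best-approximation errors of $\bz$ in $\bV_h$, of $r$ in $Q_h$, and of $\zeta$ in $Z_h$;
\item the local polynomial errors $\|\bz-\bz_\pi\|_{1,K}$ and $\|r-r_\pi\|_{1,K}$.
\end{itemize}
The $L^2$ norm of $\bz-\tP_h^u\bz$ on the left-hand side is dominated by $\|\beps(\bz-\tP_h^u\bz)\|_{0,\Omega}$ via Korn's and Poincar\'e's inequalities on $\bV$, since $\Gamma_D$ carries the homogeneous Dirichlet condition.

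For the consistency terms I will use the orthogonality of $\bPi^0_K$:
\begin{equation*}
(\bu-\bu_d,\bPi^0_K\bv_h-\bv_h)_{0,K}=((\bI-\bPi^{0,0}_K)(\bu-\bu_d),(\bI-\bPi^0_K)\bv_h)_{0,K}.
\end{equation*}
Even with only $L^2$ data, this is bounded by $\|\bu-\bu_d\|_{0,K}\,h_K|\bv_h|_{1,K}$. After dividing by $\|\buw_h\|_{\bbV}$, this gives $O(h)\,(\|\bu\|_{0,\Omega}+\|\bu_d\|_{0,\Omega})$, and the same argument handles the $p$ term. By the stability estimate \eqref{eq:cts_dependence}, $\|\buu\|_{0,\Omega}\lesssim \|\ff\|_{0,\Omega}+\|\ell\|_{0,\Omega}+\|\by\|_{0,\Omega}$, so these contributions are $O(h)\le O(h^s)$ times the data norms on the right of \eqref{est:adjointproj-D}.

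For the approximation terms I will take $\bar{\bv}_h=\bz_I$ and $\bar q_h=r_I$ from Lemma~\ref{lem:interpolant}, $\bar\phi_h$ the elementwise mean of $\zeta$, and $\bz_\pi,r_\pi$ from Lemma~\ref{lem:polynomial}. These give bounds of the form $h^s(|\bz|_{1+s,\Omega}+|r|_{1+s,\Omega}+|\zeta|_{s,\Omega})$. The step I expect to be the main obstacle is converting these seminorms into the stated data norms. The adjoint loads are $\bu-\bu_d$, $p-p_d$ and $\psi-\psi_d$, not $\bu_d,p_d,\psi_d$ alone. So I will apply \eqref{est:dual-reg} with right-hand side $\|\bu-\bu_d\|_{0,\Omega}+\|p-p_d\|_{0,\Omega}+\|\psi-\psi_d\|_{0,\Omega}$, reading it as the adjoint analogue of \eqref{est:reg} with these loads. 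I will then bound $\|\buu\|_{0,\Omega}$ once more by \eqref{eq:cts_dependence}, which produces $\|\by\|_{0,\Omega}^2+\|\ff\|_{0,\Omega}^2+\|\ell\|_{0,\Omega}^2+\|\buu_d\|_{0,\Omega}^2$.

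The physical-parameter weights in Lemma~\ref{lem:dis-proj-t}, such as $(1+1/\lambda+\alpha/\lambda)$ and $\kappa_2/\eta$, will be absorbed into $C$. Squaring the resulting bound then gives \eqref{est:adjointproj-D}.
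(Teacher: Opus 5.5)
Your proposal is correct and follows the paper's route. The paper obtains this corollary in one line from Lemma~\ref{lem:dis-proj-t}, combined with $L^2$-projection and interpolation estimates. That matches your decomposition into consistency, best-approximation and local polynomial terms. Your explicit use of the actual adjoint loads $\bu-\bu_d$, $p-p_d$, $\psi-\psi_d$ in \eqref{est:dual-reg}, together with the stability bound \eqref{eq:cts_dependence}, shows where $\|\by\|_{0,\Omega}$, $\|\ff\|_{0,\Omega}$ and $\|\ell\|_{0,\Omega}$ on the right-hand side come from, which the paper leaves implicit.
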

\noindent Next, we establish the error estimates for the control variable following the abstract analysis for distributed control problem.
\begin{theorem} \label{thm:control-est}
	Assume $(\buu, \by, \buz) \in \bbV \times \bYd \times \bbV$ is solution of problem 
    \eqref{eq:weak-D} and $(\buu_h, \by_h, \buz_h) \in \bbV_h \times \bY_{d,h} \times \bbV_h$ is solution of problem \eqref{eq:weak-h-D} then, for positive constant $C$, independent of $h$, and for $s \in (0, 1]$
	\begin{equation}\label{est:control}
	\begin{aligned} 
		\| \by - \by_h\|_{0, \Omega}^2 +  \sum_{K \in \cT_h} \big( \| \bPi^{0}_K (\bu - \bu_h) \|_{0, K}^2 & + \| \Pi^{0}_K (p-p_h) \|_{0, K}^2  + \| \psi - \psi_h \|_{0, K}^2  \big)\\
		& \quad \le C h^{2s} \left( |\by|_{s, \Omega}+ \|\ff\|_{0, \Omega} + \|\ell\|_{0, \Omega} + \|\buu_d\|_{0, \Omega} \right)^2.
	\end{aligned}
\end{equation}
\end{theorem}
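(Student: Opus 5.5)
The plan is to specialize the abstract estimate of Theorem~\ref{thm:abs-control} to the distributed setting. Then each term on its right-hand side is bounded with the approximation results of this section. In the distributed case $\bY=[L^2(\Omega)]^2$ and $B$ is the identity (more precisely, the embedding $\bV\hookrightarrow[L^2(\Omega)]^2$). On the discrete level $\langle B_h\bv_h,\by_h\rangle_{\bY}=\sum_K(\by_h,\bPi^0_K\bv_h)_{0,K}$, so $B_h$ acts as the elementwise $L^2$ projection $\bPi^0_K$. It is continuous on $\bV+\bV_h$ with a constant independent of $h$.

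The discrete triple norm is $\triplenorm{\buv_h}_h^2=\sum_K(\|\bPi^0_K\bv_h\|_{0,K}^2+\|\Pi^0_Kq_h\|_{0,K}^2+\|\phi_h\|_{0,K}^2)$. Hence the left-hand side of Theorem~\ref{thm:abs-control} contains exactly the quantities appearing on the left of \eqref{est:control}. The $\psi$-component is not projected, since $Z_h$ consists of piecewise constants and the corresponding pairing in \eqref{eq:weak-h-D-f} is the plain $L^2$ product. So it suffices to bound the four terms
\[
\|B\bz-\bPi^Y(B\bz)\|_{\bY}^2,\quad \|\by-\bPi^Y\by\|_{\bY}^2,\quad \|\bz-\tP_h^u\bz\|_{0,\Omega}^2,\quad \triplenorm{\buu-P_h\buu}_h^2 .
\]

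These are handled in order. First, since $B\bz=\bz$ and $\bPi^Y$ is the elementwise $L^2$ projection onto constants, the lemma on $\bPi^Y$ gives $\|\bz-\bPi^Y\bz\|_{0,\Omega}\lesssim h|\bz|_{1,\Omega}$ (or $h^{1+s}$-type bounds). By \eqref{est:dual-reg} and the stability of the adjoint problem, this is $\lesssim h^{s}(\|\buu_d\|_{0,\Omega}+\|\by\|_{0,\Omega}+\|\ff\|_{0,\Omega}+\|\ell\|_{0,\Omega})$. Second, the same lemma gives $\|\by-\bPi^Y\by\|_{0,\Omega}\lesssim h^s|\by|_{s,\Omega}$. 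Third, $\|\bz-\tP_h^u\bz\|_{0,\Omega}$ is controlled by Corollary~\ref{lem:proj-adjoint}. Fourth, the elementwise $L^2$ projections are $L^2$-stable, so
\[
\triplenorm{\buu-P_h\buu}_h\le\|\bu-P_h^u\bu\|_{0,\Omega}+\|p-P_h^pp\|_{0,\Omega}+\|\psi-P_h^\psi\psi\|_{0,\Omega}.
\]
Korn's and Poincar\'e's inequalities bound the displacement and pressure parts by the energy quantities of Corollary~\ref{lem:proj}.

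Collecting the four bounds yields $Ch^{2s}$ times the squared data terms. The remaining step is to replace $\|\by\|_{0,\Omega}$ by the quantities in \eqref{est:control}. This uses that $\by\in\bYd$ is bounded, since $\bYd$ is a bounded box. Alternatively, $\|\by\|_{0,\Omega}$ can be absorbed into the constant, or controlled via $\by=\bPi_{[a,b]}(-\bz/\gamma)$ by $\|\bz\|_{0,\Omega}$, and hence by the data through continuous dependence \eqref{eq:cts_dependence}.

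The main obstacle will be the $\psi$ and $p$ components in $\triplenorm{\buu-P_h\buu}_h$. These need an $L^2$ bound without a $c_0$-weight, so that the estimate stays robust when $c_0\to0$. I would first try to get the bound on $\|p-P_h^pp\|_{0,\Omega}$ from the $\frac{\kappa_1}{\eta}\|\nabla(p-P_h^pp)\|_{0,\Omega}$ term in Corollary~\ref{lem:proj}, via Poincar\'e's inequality on $Q=H^1_N(\Omega)$. The constants would then depend on $\kappa_1,\eta$ but not on $c_0$ or $\lambda$.
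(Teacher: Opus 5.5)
Your plan matches the paper's in outline: its proof is explicitly a re-run of Theorem~\ref{thm:abs-control} followed by the projection estimates. The gap is your first step, where you quote Theorem~\ref{thm:abs-control} verbatim with $B=\mathrm{id}$ and $B_h=\bPi^0_K$; that does not go through.

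The proof of Lemma~\ref{lem:est-bound-h}, on which that theorem rests, rewrites $\langle B_h\bz+\gamma\by_h,\by-\by_h\rangle_{\bY}$ as $\langle B\bz+\gamma\by,\by-\by_h\rangle_{\bY}-\gamma\|\by-\by_h\|_{\bY}^2$. That is, it silently uses $\langle (B-B_h)\bz,\by-\by_h\rangle_{\bY}=0$, and the same swap is made in $T_3$ of the proof of Theorem~\ref{thm:abs-control}. With your identification this quantity equals $\sum_{K}(\bz-\bPi^0_K\bz,\by)_{0,K}$, because orthogonality only removes the piecewise constant $\by_h$. It is nonzero in general, so the abstract estimate cannot simply be quoted for the VEM scheme.

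This is exactly why the paper re-derives \eqref{est1-y} from the continuous optimality condition with $\bz$ itself and splits
\[
\bz-\bPi^0_K\bz_h=(\bz-\bPi^0_K\tP_h^u\bz)+\bPi^0_K(\tP_h^u\bz-\bz_h).
\]
The second-to-last term then yields the additional consistency term $\gamma^{-1}\|\bz-\bPi^0_K\bz\|_{0,\Omega}^2$. For the same reason the $T_3$-type term is written with $\bPi^0_K\bz$, giving $\|(\bI-\bPi^Y)\bPi^0_K\bz\|_{0,\Omega}^2$ instead of your $\|\bz-\bPi^Y\bz\|_{0,\Omega}^2$. Both are $\lesssim h^2|\bz|_{1,\Omega}^2$, so your final rate survives. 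To get there, redo Lemma~\ref{lem:est-bound-h} keeping $B\bz$ and $B_h\bz$ apart, or equivalently add $C\|(B-B_h)\bz\|_{\bY}^2$ to the right-hand side of the abstract bound before specializing.

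Two smaller points follow.

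\textbf{Removing $\|\by\|_{0,\Omega}$.} Your third option is circular. The formula $\by=\bPi_{[a,b]}(-\bz/\gamma)$ bounds $\by$ by $\bz$, but $\bz$ is controlled through $\bu$, and hence through $\|\by\|_{0,\Omega}$ again, with a factor that need not be below one. The box bound is valid but only gives a constant depending on $\by_a,\by_b,|\Omega|$, not the data form of \eqref{est:control}. Use instead $j(\by)\le j(\bx_0)$ for a fixed $\bx_0\in\bYd$, for example $\bx_0=\cero$ when $\by_a\le\cero\le\by_b$, as in both numerical tests. Together with \eqref{eq:cts_dependence} this gives $\|\by\|_{0,\Omega}\lesssim\gamma^{-1/2}(\|\ff\|_{0,\Omega}+\|\ell\|_{0,\Omega}+\|\buu_d\|_{0,\Omega})$.

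\textbf{Robustness in $c_0$.} The statement does not claim it; the $c_0$-weighted term in the projection estimate suffices when $c_0>0$. Your Poincar\'e route also needs $|\Gamma_N|>0$, which fails in the distributed test where $\Gamma_N=\emptyset$.
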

\begin{proof}
We sketch the proof following the Theorem~\ref{thm:abs-control}.
 The optimality conditions \eqref{eq:optimality} and \eqref{eq:optimality-h} for any $\bx_h \in \bY_{d,h}$ and Lemma~\ref{lem:est-bound-h}
    gives the bound for the estimates of control as
	\begin{equation}\label{est1-y}
	\begin{aligned} 
		\gamma \| \by - \by_h \|_{0, \Omega}^2 \le  - \sum_{K \in \cT_h}  \Big( & ( \bPi^{0}_K (\tP_h^u \bz - \bz_h ), \by - \by_h )_{0, K} + (\bz - \bPi^{0}_K (\tP_h^u \bz), \by - \by_h )_{0, K} \\
		& +  ( \bPi^{0}_K \bz_h + \gamma \by_h, \by - \bx_h )_{0, K} \Big) \\
        := T_1 +T_2+T_3. &
	\end{aligned}
\end{equation}
In order to bound the term $T_1$, we approach to the discrete problems \eqref{eq:weak-h-D}, \eqref{eq:Poro-proj-D} and \eqref{eq:Poro-adjoint-D}. We achieve the following error equation, and
	adding the equations obtained via testing the functions with $\bv_h =\tP_h^u \bz - \bz_h, \,q_h =\tP_h^p p - p_h$ and $\phi_h=\tP_h^{\psi} \psi - \psi_h$
    , and $\bv_h =P_h^u \bu - \bu_h$, $q_h =P_h^p p - p_h$, $\phi_h =P_h^{\psi} \psi - \psi_h$ 
    , yield
		\begin{align*} 
			T_1 & = \sum_{K \in \cT_h} \Big( ( \bu - \bu_h, \bPi^{0}_K (P_h^u \bu - \bu_h) )_{0,K} +( p- p_h, \Pi^{0}_K (P_h^p p - p_h) )_{0,K} \\
			& \qquad \qquad + ( \psi- \psi_h, P_h^\psi \psi - \psi_h )_{0,K} \Big) \\
			& = \sum_{K \in \cT_h}  \Big( \| \bPi^{0}_K (P_h^u \bu - \bu_h)\|_{0,K}^2 + \| \Pi^{0}_K (P_h^p p - p_h)\|_{0,K}^2 + \| P_h^\psi \psi - \psi_h \|_{0,K}^2 \\
			& \qquad \qquad + ( \bu - P_h^u \bu, \bPi^{0}_K (P_h^u \bu - \bu_h) )_{0,K} +( p- P_h^p p, \Pi^{0}_K (P_h^p p - p_h) )_{0,K} \\
			& \qquad \qquad + ( \psi- P_h^\psi \psi, P_h^\psi \psi - \psi_h )_{0,K}  \Big)
		\end{align*}
    With the help of projection $\bPi^Y$, the term $T_3$ can be rewritten on each $K \in \cT_h$ as
\begin{align*}
	& 
    T_3= ( \bPi^{0}_K \bz_h + \gamma \by_h, \by - \bPi^Y \by )_{0, K}  \nonumber\\
	& = ( \bPi^{0}_K (\bz_h - \bz )+ \gamma (\by_h - \bPi^Y \by), \by - \bPi^Y \by )_{0, K}+  ( \bPi^{0}_K \bz+ \gamma \bPi^Y \by, \by - \bPi^Y \by )_{0, K} \nonumber\\
	& = ( \bPi^{0}_K (\bz_h - \bz)+ \gamma (\bPi^Y \by - \by), \by - \bPi^Y \by )_{0, K} +  ((\bI - \bPi^Y) ( \bPi^{0}_K \bz+ \gamma  \by), \by - \bPi^Y \by )_{0, K}. 
\end{align*}
    Thus the inequality \eqref{est1-y} with the use of orthogonality property of $L^2$ projection, we have, for $\bx_h = \bPi^Y \by $,
	\begin{align*}
		\gamma \| \by - \by_h \|_{0, \Omega}^2  
		\le -\sum_{K \in \cT_h} & \Big( \| \bPi^{0}_K (P_h^u \bu - \bu_h)\|_{0,K}^2 + \| \Pi^{0}_K (P_h^p p - p_h)\|_{0,K}^2 + \| P_h^\psi \psi - \psi_h \|_{0,K}^2 \\
		& + ( \bu - P_h^u \bu, \bPi^{0}_K (P_h^u \bu - \bu_h) )_{0,K} +( p- P_h^p p, \Pi^{0}_K (P_h^p p - p_h) )_{0,K} \\
		& + ( \psi- P_h^\psi \psi, P_h^\psi \psi - \psi_h )_{0,K} + (\bz - \bPi^{0}_K (\tP_h^u \bz), \by - \by_h )_{0, K}  \\
		& \, + ( \bPi^{0}_K (\bz_h- \bz) + \gamma (\bPi^Y \by - \by), \by - \bPi^Y \by )_{0, K} \\
		& \, + ( (\bI - \bPi^Y)(\bPi^{0}_K \bz + \gamma \by), \by - \bPi^Y \by )_{0, K}\Big).
	\end{align*}
	We can conclude with the help of Holder's inequality, manipulating the terms and Young's inequality,
	\begin{align*}
		\gamma \| \by - \by_h \|_{0, \Omega}^2 &+ \sum_{K \in \cT_h}  
			\Big( \| \bPi^{0}_K (P_h^u \bu - \bu_h)\|_{0,K}^2 + \| \Pi^{0}_K (P_h^p p - p_h)\|_{0,K}^2  + \| P_h^\psi \psi - \psi_h \|_{0,K}^2\Big) \\
		\lesssim & \| \bu - P_h^u \bu\|_{0,\Omega}^2 +\| p- P_h^p p\|_{0,\Omega}^2 + \|  \psi- P_h^\psi \psi \|_{0, \Omega}^2 + \frac{1}{\gamma} \|\bz - \tP_h^u \bz\|_{0,\Omega}^2 + \| \bz - \bz_h\|_{0, \Omega}^2 \\
		& + \sum_{K \in \cT_h} \Big(  \frac{1}{\gamma} \|\bz - \bPi^{0}_K \bz \|_{0,K}^2  + \|(\bI - \bPi^Y) \bPi^{0}_K \bz\|_{0, K}^2 \Big) + \gamma\| (\bI - \bPi^Y) \by\|_{0,\Omega}^2 .
	\end{align*}
	Here the estimates of $\| \bz - \bz_h\|_{0, \Omega}$ can be achieved
    using the projection estimate and conclude the proof.
    
\end{proof}

\bigskip In the next corollary, we seek the estimates of state and costate variables in energy norm.
\begin{corollary}
	Assume $(\buu, \by, \buz) \in \bbV \times \bYd \times \bbV$ is solution of problem \eqref{eq:weak-D} and $(\buu_h, \by_h, \buz_h) \in \bbV_h \times \bY_{d,h} \times \bbV_h$ is solution of problem \eqref{eq:weak-h-D}. Then we have the estimate bounds: for $s \in (0, 1]$,
	\begin{align*}
		 2 \mu \| \beps (\bu - \bu_h)\|_{0, \Omega}^2  + \left( c_0 + \frac{\alpha^2}{\lambda} \right) \| p - p_h \|_{0, \Omega}^2   & + \frac{\kappa_1}{\eta} \| \nabla ( p - p_h)\|_{0, \Omega}^2 + \| \psi - \psi_h \|_{0, \Omega}^2   \\
		+ 2 \mu \| \beps(\bz - \bz_h)\|_{0, \Omega}^2 & \le C h^{2 s} \left(|\by|_{s, \Omega}+ 
        \|\ff\|_{0, \Omega} + \|\ell\|_{0, \Omega} + \|\buu_d\|_{0, \Omega} \right)^2.
	\end{align*}
\end{corollary}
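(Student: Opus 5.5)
The estimate follows the pattern of Corollary~\ref{cor:abs-state}, now with the VEM projections \eqref{eq:Poro-proj-D} and \eqref{eq:Poro-adjoint-D}. I split both errors through the poroelastic projections:
\[
\buu-\buu_h=(\buu-P_h\buu)+(P_h\buu-\buu_h),\qquad \buz-\buz_h=(\buz-\tP_h\buz)+(\tP_h\buz-\buz_h).
\]
The projection parts $\buu-P_h\buu$ and $\bz-\tP_h^u\bz$ are bounded in the energy norm by Corollary~\ref{lem:proj} and Corollary~\ref{lem:proj-adjoint}. Since $\by\in\bYd$ is bounded, $\|\by\|_{0,\Omega}$ is absorbed into the data, and the $\tfrac{\alpha^2}{\lambda}\|p-P_h^pp\|_{0,\Omega}^2$ contribution is controlled by the coercivity of $a_{2,h}$ with weight $c_0+\alpha^2/\lambda$, as in Lemma~\ref{lem:dis-proj}. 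Each of these terms is of order $h^{2s}$ times the data.

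For the discrete parts, I subtract \eqref{eq:weak-h-D-a}--\eqref{eq:weak-h-D-c} from \eqref{eq:Poro-proj-D}. This gives the error system \eqref{eq:er1}, whose only load is $\sum_K(\by-\by_h,\bPi^0_K\bv_h)_{0,K}$. I then test with $\bv_h=P_h^u\bu-\bu_h$, $q_h=P_h^pp-p_h$, $\phi_h=P_h^\psi\psi-\psi_h$ and add the three equations. The $b_1$ and $b_2$ couplings cancel, leaving
\[
a_{1,h}(\cdot,\cdot)+a_{2,h}(\cdot,\cdot)+a_3(\cdot,\cdot)=\sum_K(\by-\by_h,\bPi^0_K(P_h^u\bu-\bu_h))_{0,K}.
\]
Applying coercivity, $L^2$-stability of $\bPi^0_K$, Korn/Poincaré and Young's inequality bounds the $\bu,p$ parts by $\|\by-\by_h\|_{0,\Omega}^2$. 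For the $\psi$ part, the coercive term $\tfrac1\lambda\|\cdot\|^2$ is not robust in $\lambda$. I therefore use the discrete inf-sup condition of $b_1$ in the first error equation:
\[
\beta\|P_h^\psi\psi-\psi_h\|_{0,\Omega}\lesssim 2\mu\|\beps(P_h^u\bu-\bu_h)\|_{0,\Omega}+\|\by-\by_h\|_{0,\Omega}.
\]
This gives a parameter-uniform bound, and Theorem~\ref{thm:control-est} then yields $O(h^{2s})$.

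For the adjoint I subtract \eqref{eq:weak-h-D-d}--\eqref{eq:weak-h-D-f} from \eqref{eq:Poro-adjoint-D}, which gives \eqref{eq:er2} with right-hand sides
\[
\sum_K(\bu-\bu_h,\bPi^0_K\bv_h)_{0,K},\quad \sum_K(p-p_h,\Pi^0_Kq_h)_{0,K},\quad (\psi-\psi_h,\phi_h)_{0,\Omega}.
\]
I test with $\tP_h\buz-\buz_h$, again using the inf-sup condition for the $\zeta$ component. The system is not symmetric, so the coupling terms must be checked to cancel rather than add up. The result is that $2\mu\|\beps(\tP_h^u\bz-\bz_h)\|_{0,\Omega}^2$ is bounded by the $L^2$ errors of the state. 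These are in turn bounded by the state estimate just proved, using the triangle inequality and the projection bounds.

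The main obstacle is the $\phi$-equation of \eqref{eq:er2}. It carries a nonzero load $(\psi-\psi_h,\phi_h)$, so testing with $\phi_h=\tP_h^\psi\zeta-\zeta_h$ makes the sign structure indefinite. To handle it, I first bound $\|\tP_h^\psi\zeta-\zeta_h\|_{0,\Omega}$ via inf-sup from the first equation, then substitute into the energy identity and absorb terms with Young's inequality with a small parameter. All constants must be tracked so that they do not degenerate as $\lambda\to\infty$. Combining all pieces gives the stated $Ch^{2s}$ bound.
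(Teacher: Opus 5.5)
Your overall route is the paper's: split through $P_h$ and $\tP_h$, run an energy estimate on the discrete parts, recover the total pressure by inf-sup, then use the projection estimates and Theorem~\ref{thm:control-est}. However, the energy identity the argument rests on is false.

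With $e_u:=P_h^u\bu-\bu_h$, $e_p:=P_h^pp-p_h$, $e_\psi:=P_h^\psi\psi-\psi_h$, adding the three tested equations of \eqref{eq:er1} gives
\[
a_{1,h}(e_u,e_u)+a_{2,h}(e_p,e_p)+2\,b_1(e_u,e_\psi)-a_3(e_\psi,e_\psi)=\sum_{K\in\cT_h}(\by-\by_h,\bPi^0_K e_u)_{0,K}.
\]
So $b_2$ cancels, but $b_1$ doubles and $a_3$ enters with the wrong sign. No sign combination removes both couplings while keeping $a_{1,h}$ and $a_{2,h}$ with the same sign; cancelling both forces $a_{1,h}-a_{2,h}+a_3$. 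The reason is that the $\bu$--$\psi$ coupling is symmetric, while the $p$--$\psi$ coupling is skew ($-b_2$ in the second equation, $+b_2$ in the third). Your later remark that the adjoint system ``is not symmetric'' therefore already applies to the state system.

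The paper instead takes first $+$ second $-$ third. This removes $b_1$ and leaves $a_{1,h}(e_u,e_u)+a_{2,h}(e_p,e_p)-2b_2(e_p,e_\psi)+a_3(e_\psi,e_\psi)$. The missing idea is that the doubled $b_2$ term is absorbed by the $\alpha^2/\lambda$ part of the weight $c_0+\alpha^2/\lambda$ in $m_h$. On each $K$,
\[
\frac{\alpha^2}{\lambda}\|\Pi^0_Ke_p\|_{0,K}^2-\frac{2\alpha}{\lambda}(\Pi^0_Ke_p,e_\psi)_{0,K}+\frac1\lambda\|e_\psi\|_{0,K}^2=\frac1\lambda\|\alpha\Pi^0_Ke_p-e_\psi\|_{0,K}^2\ge0,
\]
which is the square appearing in the paper's displayed bound. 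This makes the left-hand side coercive uniformly in $\lambda$: only the $2\mu$, $c_0$ and $\kappa_1/\eta$ contributions and the $\frac{\alpha^2}{\lambda}$-weighted stabilization survive. Consequently $a_3$ gives no separate control of $e_\psi$; that control comes from the inf-sup argument you propose.

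The adjoint works the same way. With $\tilde e_z:=\tP_h^u\bz-\bz_h$, $\tilde e_r:=\tP_h^pr-r_h$, $\tilde e_\zeta:=\tP_h^\psi\zeta-\zeta_h$, taking first $+$ second $-$ third in \eqref{eq:er2} removes $b_1$ and produces $+2b_2(\tilde e_r,\tilde e_\zeta)$. This closes as $\frac1\lambda\|\alpha\Pi^0_K\tilde e_r+\tilde e_\zeta\|_{0,K}^2$. So the indefiniteness you attribute to the load $(\psi-\psi_h,\phi_h)$ actually comes from the coupling signs. The load itself is harmless, and your treatment of it (inf-sup bound for $\tilde e_\zeta$ from the first equation, then Young) works once the correct identity is used.
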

	\begin{proof}
		Taking the test functions $\bv_h =P_h^u \bu - \bu_h$, $q_h =P_h^p p - p_h$, $\phi_h =P_h^{\psi} \psi - \psi_h$ in error equation
        gives
		\begin{equation*}
			\begin{aligned}
				a_{1,h}(P_h^u \bu - \bu_h, P_h^u \bu - \bu_h) +  b_1(P_h^u \bu - \bu_h, P_h^{\psi} \psi - \psi_h)     = \sum_{K \in \cT_h} ( \by - \by_h, \bPi^{0}_K (P_h^u \bu - \bu_h) )_{0,K},\\
				 a_{2,h}(P_h^p p - p_h, P_h^p p - p_h)   -    b_2(P_h^p p - p_h, P_h^{\psi} \psi - \psi_h )  = 0, \qquad \qquad \\
				b_1(P_h^u \bu - \bu_h, P_h^{\psi} \psi - \psi_h)    +  b_2(P_h^p p - p_h, P_h^{\psi} \psi - \psi_h)- a_3(P_h^\psi \psi - \psi_h, P_h^{\psi} \psi - \psi_h)  =0. \qquad  \qquad
			\end{aligned}
		\end{equation*}
		Subtracting the last two equations from first equation in the above system imply
		\begin{align*}
			a_{1,h}(P_h^u \bu - \bu_h, P_h^u \bu - \bu_h)  + a_{2,h}(P_h^p p - p_h, P_h^p p - p_h)  -  2  \, b_2(P_h^p p - p_h, P_h^{\psi} \psi - \psi_h ) \\
			+ a_3(P_h^\psi \psi - \psi_h, P_h^{\psi} \psi - \psi_h) = \sum_{K \in \cT_h} ( \by - \by_h, \bPi^{0}_K (P_h^u \bu - \bu_h) )_{0,K}.
		\end{align*}
		Using the coercivity of discrete bilinear forms and Korn's inequality, 
        we get the bound in terms of thes estimates of control variable as,
		\begin{align*}
			\frac{\mu}{2} \| \beps( P_h^u \bu - \bu_h) \|_{0, \Omega}^2  + c_0 \| P_h^p p - p_h \|_{0, \Omega}^2   + \frac{\kappa_1}{\eta} & \| \nabla (P_h^p p - p_h)\|_{0, \Omega}^2 + \frac{\alpha^2}{\lambda} \sum_{K \in \cT_h} \|(I - \Pi^{0}_K) (P_h^p p - p_h) \|_{0,K}^2  \\
			+ \frac{1}{\lambda} \sum_{K \in \cT_h} \| \alpha  \Pi^{0}_K (P_h^p p - p_h) - (P_h^\psi \psi - \psi_h) \|_{0,K}^2 & \le \frac{C_K^2}{2\mu} \| \by - \by_h\|_{0, \Omega}^2.
		\end{align*}
		In addition, the discrete inf-sup condition of $b(\cdot, \cdot)$,
triangle's inequality, and Lemma~\ref{lem:proj} imply
		\begin{align*}
			\mu \| \beps(\bu - \bu_h) \|_{0, \Omega}^2  + & \left(c_0 + \frac{\alpha^2}{\lambda} \right) \| p - p_h \|_{0, \Omega}^2   + \frac{\kappa_1}{\eta} \| \nabla (p - p_h)\|_{0, \Omega}^2 +  \|\psi - \psi_h\|_{0, \Omega}^2   \\
			& \lesssim \left( \frac{C_K^2}{2\mu} + \frac{C^2 +1}{\tilde{\beta}^2} \right)\| \by - \by_h\|_{0, \Omega}^2 + \frac{\mu ^2}{\tilde{\beta}^2} \|\bu - P_h^u \bu\|_{0, \Omega}^2.
		\end{align*}
		Next, we proceed for the estimates of adjoint states by taking the test functions $\bv_h =\tP_h^u \bz - \bz_h, q_h =\tP_h^p p - p_h$ and $\phi_h=\tP_h^{\psi} \psi - \psi_h$ in error equation
        in same manner, and get
		\begin{align*}
			a_{1,h}(\tP_h^u \bz - \bz_h, \tP_h^u \bz - \bz_h) +  b_1(\tP_h^u \bz - \bz_h, \tP_h^{\psi} \zeta - \zeta_h)     =\; \sum_{K \in \cT_h} ( \bu - \bu_h, \bPi^{0}_K (\tP_h^u \bz - \bz_h) )_{0,K},\\
			a_{2,h}(\tP_h^p r - r_h, \tP_h^p r - r_h)   +    b_2(q\tP_h^p r - r_h, \tP_h^{\psi} \zeta - \zeta_h)  =\sum_{K \in \cT_h} ( p- p_h, \Pi^{0}_K (\tP_h^p r - r_h) )_{0,K},  \\
			b_1(\tP_h^u \bz - \bz_h, \tP_h^\psi \zeta - \zeta_h)  -  b_2(\tP_h^p r - r_h, \tP_h^\psi \zeta - \zeta_h) - a_3(\tP_h^\psi \zeta - \zeta_h, \tP_h^\psi \zeta - \zeta_h)  =  \sum_{K \in \cT_h} ( \psi- \psi_h, \tP_h^\psi \zeta - \zeta_h )_{0,K}. \quad  		
		\end{align*}
		Then we have
		\begin{align*}
			\mu \| \beps( \tP_h^u \bz - \bz_h) \|_{0, \Omega}^2  + \left(c_0 + \frac{\alpha^2}{\lambda} \right) &\| \tP_h^p r - r_h \|_{0, \Omega}^2   + \frac{\kappa_1}{\eta} \| \nabla (\tP_h^p r - r_h)\|_{0, \Omega}^2 + \|\tP_h^\psi \zeta - \zeta_h\|_{0, \Omega}^2   \\
			& \lesssim \big( \| \bu - \bu_h\|_{0, \Omega}^2  + \| p - p_h\|_{0, \Omega}^2 + \| \psi - \psi_h\|_{0, \Omega}^2\big).
		\end{align*}
		Thus, the result from Lemmas~\ref{lem:proj}--\ref{lem:proj-adjoint} alongwith Theorem~\ref{thm:control-est} can conclude the proof.
	\end{proof}

\subsection{Discrete Neumann boundary control formulation}
%

We introduce the discrete space for the Neumann boundary control problem with the set of all Neumann boundary edges $\mathcal{E}_h^N$ as
    \begin{align*}
		\bY_h &:= \lbrace \by_h \in \bY: \by_h |_e \in [\bbP_0(e)]^2 \, \forall e \in  \mathcal{E}_h^N \rbrace, \\
        \bY_{d,h} & := \lbrace \by_h \in \bY_h: \by_a \le \by_h \le \by_b \rbrace.
	\end{align*}
    Then the virtual element formulation of the continuous Neumann boundary control problem \eqref{eq:weak-N} is defined as: seek $(\buu_h, \by_h, \buz_h) \in \bbV_h \times \bY_{d,h} \times \bbV_h$
	such that
	\begin{subequations} \label{eq:weak-h-N}
		\begin{align}
			a_{1,h}(\bu_h, \bv_h) + b_1(\bv_h, \psi_h) &=  \sum_{K \in \cT_h} ( \ff, \bPi^{0}_K \bv_h )_{0, K}  + \sum_{e \in \mathcal{E}_h^N} (\by_h, \bv_h )_{0, e}    & \forall \bv_h \in \bV_h, \label{eq:weak-h-N-a}\\
			a_{2,h}(p_h, q_h) - b_2(q_h, \psi_h) & = \sum_{K \in \cT_h} ( \ell, \Pi^{0}_K q_h )_{0, K}  & \forall q_h \in Q_h, \label{eq:weak-h-N-b}\\
			b_1(\bu_h, \phi_h) + b_2(p_h, \phi_h) - a_3(\psi_h, \phi_h) & =0  & \forall \phi_h \in Z_h, \label{eq:weak-h-N-c}\\
			a_{1,h}(\bv_h, \bz_h) + b_1(\bv_h, \zeta_h) &= \sum_{K \in \cT_h} ( \bu_h - \bu_d, \bPi^{0}_K \bv_h)_{0, K}   & \forall \bv_h \in \bV_h, \label{eq:weak-h-N-d} \\
			a_{2,h}(r_h, q_h) + b_2(q_h, \zeta_h) & = \sum_{K \in \cT_h} (p_h - p_d, \Pi^{0}_K  q_h)_{0, K}  & \forall q_h \in Q_h, \label{eq:weak-h-N-e} \\
			b_1(\bz_h, \phi_h) - b_2(r_h, \phi_h) - a_3(\zeta_h, \phi_h) & =\sum_{K \in \cT_h} (\psi_h - \psi_d,  \phi_h)_{0, K} \,\, & \forall \phi_h \in Z_h, \label{eq:weak-h-N-f} \\
			\sum_{e \in \mathcal{E}_h^N} (\bz_h + \gamma \by_h, \bx_h - \by_h )_{0,e} & \ge  0 &\quad \forall \bx_h \in \bY_{d,h}.  \label{eq:optimality-h-N}
		\end{align}
	\end{subequations}

    \noindent Similar to the distributed control optimality condition, we can rewrite the optimality condition \eqref{eq:optimality-h-N} as $\by_h|_e:= \bPi_{[a,b]} \left((-1/\gamma) \bPi^{0,0}_K \bz_h|_e  \right)$ for each $e \in \mathcal{E}_h^N$.

    \subsubsection{A priori error estimates}
    In order to develop the error estimates, we assume that the continuous solution of the problem have the following regularity on the polygonal domain $\Omega$ for $0 < s \le 1$ 
    \begin{gather} \label{reg}
        \bu, \bz \in [H^{1+s}(\Omega)]^2, \,\, p, r \in H^{1+s}(\Omega), \,\,
        \phi, \chi \in H^s(\Omega) \,\, \text{ and } \,\, \by \in [H^{\frac{1}{2} +s }(\Gamma_N)]^2 
    \end{gather}

    \begin{lemma}
        Assuming the regularity of the solution \eqref{reg}, we have the estimate of the projection $\bPi^{Y}$ for a constant $C$ independent of $h$ as, for $s \in (0,1]$
        \begin{align*}
            \| \bz - \bPi^{Y} \bz \|_{0, \Gamma} \le C h^{\frac{1}{2} + s} \| \bz \|_{1+s, \Omega}, \qquad \text{and} \qquad 
            \| \by - \bPi^{Y} \by \|_{0, \Gamma_N} \le C h^{\frac{1}{2} + s} \| \by \|_{\frac{1}{2} + s, \Gamma_N}.
        \end{align*}
    \end{lemma}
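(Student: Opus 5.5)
The plan is to reduce both estimates to a single edgewise approximation property of $\bPi^{Y}$ and then sum over $\mathcal{E}_h^N$. Since $\bY_h$ consists of piecewise constants on the boundary edges, $\bPi^{Y}$ defined by \eqref{def:Py} acts edge by edge: on each $e\in\mathcal{E}_h^N$ it is the mean value $\bPi^{Y}\bw|_e=\frac{1}{|e|}\int_e \bw\ds$. For the first estimate I read $\bPi^Y \bz$ as this projection applied to the trace $\bz|_{\Gamma_N}$, which is the quantity $B\bz$ entering Theorem~\ref{thm:abs-control}. I would then prove, for $t=\tfrac12+s$ and $\bw\in[H^{t}(\Gamma_N)]^2$,
\begin{equation*}
\|\bw-\bPi^{Y}\bw\|_{0,\Gamma_N}^2=\sum_{e\in\mathcal{E}_h^N}\|\bw-\bPi^{Y}\bw\|_{0,e}^2\le C\sum_{e\in\mathcal{E}_h^N} h_e^{2t}|\bw|_{t,e}^2\le C h^{2t}\|\bw\|_{t,\Gamma_N}^2 .
\end{equation*}
The second inequality in the statement is exactly this display with $\bw=\by$ and the regularity $\by\in[H^{\frac12+s}(\Gamma_N)]^2$ from \eqref{reg}.

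The edgewise step is a scaling argument. Map $e$ affinely to the reference interval $\hat e=(0,1)$; assumption ($\tilde A$) makes $|e|\simeq h_K$ uniformly. On $\hat e$, the mean-value projection reproduces constants and is $L^2$-stable. The Bramble–Hilbert lemma in the Sobolev–Slobodeckij space $H^{t}(\hat e)$, or for $t<1$ the elementary fractional Poincaré inequality $\|\hat w-\bar{\hat w}\|_{0,\hat e}\le C|\hat w|_{t,\hat e}$ obtained by writing $\hat w(x)-\bar{\hat w}$ as an average of $\hat w(x)-\hat w(y)$, gives the reference bound. Scaling back then produces the factor $h_e^{t}$, because $\|\cdot\|_{0,e}$ scales like $h_e^{1/2}$ and $|\cdot|_{t,e}$ like $h_e^{1/2-t}$. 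Summing over edges uses that the Slobodeckij seminorm is superadditive over a partition of $\Gamma_N$ into edges.

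For the first inequality I would combine this with the trace theorem. For $s\in(0,1]$, $\bz\in[H^{1+s}(\Omega)]^2$ has trace $\bz|_{\Gamma}\in[H^{\frac12+s}(e)]^2$ on each straight side of the polygon, with $\|\bz\|_{\frac12+s,e}\le C\|\bz\|_{1+s,\Omega}$. At $s=1$ this trace bound is the one that needs care, since $H^{3/2}$ is a borderline trace space, and I would handle it side by side, where each side is a Lipschitz piece. Applying the edgewise estimate to each boundary edge and summing then gives $\|\bz-\bPi^{Y}\bz\|_{0,\Gamma}\le C h^{\frac12+s}\|\bz\|_{1+s,\Omega}$.

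The main obstacle is the edgewise estimate in the range $t=\tfrac12+s>1$, that is $s>\tfrac12$. There Bramble–Hilbert for the piecewise-constant projection only controls the first-order seminorm, which would cap the rate. My first attempt is to use the interpolation-space characterisation of $H^t(\hat e)$ between $H^1(\hat e)$ and $H^2(\hat e)$, with $\bPi^Y$ applied to the trace space as above. I expect this to be the delicate point, where the constant's dependence on $t$ and on the mesh regularity ($\tilde A$) must be tracked carefully; for $s\le\tfrac12$ the argument is routine.
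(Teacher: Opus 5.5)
\textbf{There is a genuine gap, and it cannot be closed.} It sits exactly in the case $s\in(\tfrac12,1]$ that you flagged. The difficulty there is not tracking constants: the inequality is false in that range. Since $\bY_h$ consists of edgewise constants, $\bPi^Y$ is the edgewise mean, and its approximation order is one.

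Here is a concrete counterexample. Let $w$ be linear with slope $a\neq 0$ on a subsegment $\gamma\subset\Gamma_N$ of length $L$. For the first estimate, take $\bz=(\chi(x,y)\,x,0)$, where $\chi$ is a smooth cutoff supported away from $\Gamma_D$ and equal to one near $\gamma$; this $\bz$ lies in every $H^{1+s}(\Omega)$. On each edge $e\subset\gamma$ a direct computation gives $\|w-\bPi^Y w\|_{0,e}^2=a^2h_e^3/12$. On uniform meshes there are about $L/h$ such edges, so $\|w-\bPi^Y w\|_{0,\Gamma_N}\ge c\,h$ with $c>0$. This contradicts $Ch^{1/2+s}$ as $h\to 0$ whenever $s>\tfrac12$.

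The same example shows where your reference step breaks. For $\hat w(\hat x)=\hat x$ on $\hat e$ and $t=\tfrac12+s>1$, the Slobodeckij seminorm $|\hat w|_{t,\hat e}$ vanishes, since $\hat w'$ is constant, while $\hat w-\bar{\hat w}\neq 0$. A Bramble--Hilbert bound in $|\cdot|_{t,\hat e}$ would therefore require the projection to reproduce $\bbP_1$, which the mean value does not. Interpolating between $H^1(\hat e)$ and $H^2(\hat e)$ does not rescue it either. On $H^2(\hat e)$ the mean-value error is still only controlled by $|\hat w|_{1,\hat e}$, so after scaling you get the factor $h_e$ at both endpoints, and hence at every intermediate $t$.

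What your argument does establish is a capped version of the lemma.
\begin{itemize}
\item For $s\in(0,\tfrac12]$, your ingredients give both estimates exactly as stated. These are the scaling step with the fractional Poincar\'e inequality, superadditivity of the seminorm over edges, and the sidewise trace theorem.
\item For $s\in(\tfrac12,1]$, the same argument with $t=1$ gives the rate $h$. It uses that, side by side, $\bz|_{\Gamma_N}\in H^{1/2+s}\subset H^1$. So you get $\|\bz-\bPi^Y\bz\|_{0,\Gamma_N}\le Ch\|\bz\|_{1+s,\Omega}$ and $\|\by-\bPi^Y\by\|_{0,\Gamma_N}\le Ch|\by|_{1,\Gamma_N}$.
\end{itemize}
Overall the rate is $h^{\min\{1/2+s,\,1\}}$.

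The paper states this lemma without proof, so there is no argument of theirs to compare against. You should prove and state the capped version rather than trying to push the $s>\tfrac12$ case through. Any rate derived from this lemma, such as the one in Theorem~\ref{thm:abs-control-N}, inherits the same cap.
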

    \noindent Now, we define a poroelastic projection operator for state and adjoint variable in case of Neumann boundary control problem as follows,
	\begin{itemize}
	\item {\bf Poroelastic projection:} Define $P_h=(P_h^u, P_h^p, P_h^{\psi}): \bbV \to \bbV_h$ as
	\begin{subequations}\label{eq:Poro-proj-N}
		\begin{alignat}{6}
			a_{1,h}(P_h^u \bu, \bv_h)+  b_1(\bv_h, P_h^{\psi} \psi)     &=& \sum_{K \in \cT_h} (\ff, \bPi^{0}_K \bv_h )_{0,K} + \sum_{e \in \mathcal{E}_h^N} (\by, \bv_h )_{0,e}   &&\forall \bv_h \in \bV_h,\\
			a_{2,h}(P_h^p p, q_h)   -    b_2(q_h, P_h^{\psi} \psi)  &=& \sum_{K \in \cT_h} (\ell, \Pi^{0}_K q_h )_{0,K}  \quad &&\forall q_h \in Q_h,  \\
			b_1(P_h^u \bu, \phi_h)  +  b_2(P_h^p p, \phi_h) - a_3(P_h^\psi \psi, \phi_h)  &=&0 \qquad     \qquad &&\forall \phi_h \in Z_h.
	\end{alignat}\end{subequations}

    \item {\bf Poroelastic projection for adjoint problem:} Define $\tP_h=(\tP_h^u, \tP_h^p, \tP_h^{\psi}): \bbV \to \bbV_h$ as
	\begin{subequations}\label{eq:Poro-adjoint-N}
		\begin{alignat}{6}
			a_{1,h}(\bv_h, \tP_h^u \bz) +  b_1(\bv_h, \tP_h^{\psi} \zeta)     &=&\; \sum_{K \in \cT_h} ( \bu - \bu_d, \bPi^{0}_K \bv_h )_{0,K}  &\quad&\forall \bv_h \in \bV_h,\\
			a_{2,h}(q_h, \tP_h^p r)   +    b_2(q_h, \tP_h^{\psi} \zeta)  &=& \sum_{K \in \cT_h} ( p- p_d, \Pi^{0}_K q_h )_{0,K}  &\quad&\forall q_h \in Q_h,  \\
			b_1(\tP_h^u \bz, \phi_h)    -  b_2(\tP_h^p r, \phi_h)- a_3(\tP_h^\psi \zeta, \phi_h)  &=& \sum_{K \in \cT_h} ( \psi- \psi_d, \phi_h )_{0,K} \quad    &\quad&\forall \phi_h \in Z_h.
	\end{alignat}\end{subequations}
    \end{itemize}
    
    Following the proof of Lemma~\ref{lem:proj} and Lemma~\ref{lem:dis-proj-t}, the error estimates of the poroelastic projections for state and adjoint variables are stated in the next two lemmas for the Neumann boundary control.
    \begin{lemma} \label{lem:proj}
		Assume that $\buu \in \bbV$ and $P_h \buu \in \bbV_h$ are the solution of continuous problem  \eqref{eq:weak-N-a}-\eqref{eq:weak-N-c} and poroelastic projection \eqref{eq:Poro-proj-N} respectively.
		 Then it holds, for constant $C$ independent of $h$,
		\begin{equation}\label{est:proj-D}
			\begin{aligned} 
			2 \mu \| \beps(\bu - P_h^u \bu) \|_{0, \Omega}^2  + c_0 \| p - P_h^p p \|_{0, \Omega}^2 + & \frac{\kappa_1}{\eta} \| \nabla (p - P_h^p p)\|_{0, \Omega}^2+ \| \psi - P_h^{\psi} \psi \|_{0, \Omega}^2 \\
			& \le C h^{2s} ( \|\ff \|_{0, \Omega}^2 + \|\by \|_{0, \Gamma_N}^2 + \|\ell\|_{0, \Omega}^2  ).
		\end{aligned}
	\end{equation}
	for $s \in (0,1]$.
	\end{lemma}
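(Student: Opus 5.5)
The plan is to specialize the abstract estimate of Lemma~\ref{lem:dis-proj} to the Neumann setting and then bound each consistency and approximation term with Lemmas~\ref{lem:polynomial}--\ref{lem:interpolant} and the regularity \eqref{reg}. Comparing \eqref{eq:Poro-proj-N} with \eqref{eq:weak-N-a}--\eqref{eq:weak-N-c}, the boundary term is
\[
\langle B\bv,\by\rangle_{\bY}=(\by,\bv)_{0,\Gamma_N}.
\]
It appears identically, and exactly, on both sides, so $B_h=B$ and $|\langle \by,(B-B_h)\bv_h\rangle_{\bY}|=0$. This is the main simplification compared with the distributed case. As in the proof of Lemma~\ref{lem:dis-proj}, we subtract the continuous equations from the projection equations, insert an arbitrary $(\bar\bv_h,\bar q_h,\bar\phi_h)\in\bbV_h$, test with $P_h\buu-(\bar\bv_h,\bar q_h,\bar\phi_h)$, and use coercivity of $a_{1,h},a_{2,h}$ together with the discrete inf-sup condition for $b_1$. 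This gives the bound of Lemma~\ref{lem:dis-proj} with only the $F-F_h$ and $L-L_h$ consistency terms remaining.

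Next we estimate each term.
\begin{itemize}
\item For the load terms, orthogonality of $\bPi^0_K$ gives $(F-F_h)(\bv_h)=\sum_K(\ff,(\bI-\bPi^0_K)\bv_h)_{0,K}$. Bounding this by $\|\ff\|_{0,\Omega}\,h\,\|\nabla\bv_h\|_{0,\Omega}$ contributes $Ch\|\ff\|_{0,\Omega}$. The term $L-L_h$ is treated the same way and gives $Ch\|\ell\|_{0,\Omega}$.
\item For the best-approximation terms we take $\bar\bv_h=\bu_I$, $\bar q_h=p_I$ (Lemma~\ref{lem:interpolant}) and $\bar\phi_h$ the piecewise-constant $L^2$ projection of $\psi$. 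This yields $h^s(|\bu|_{1+s,\Omega}+|p|_{1+s,\Omega}+|\psi|_{s,\Omega})$.
\item The polynomial terms $\bu_\pi,p_\pi$ are handled by Lemma~\ref{lem:polynomial}, again giving order $h^s$.
\end{itemize}
Because the right-hand sides of the energy inequality are linear in the errors, squaring produces the factor $h^{2s}$.

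Finally we need the regularity bound for the Neumann state,
\[
(2\mu)^{1/2}|\bu|_{1+s,\Omega}+\kappa_1^{1/2}|p|_{1+s,\Omega}+|\psi|_{s,\Omega}\lesssim \|\ff\|_{0,\Omega}+\|\by\|_{0,\Gamma_N}+\|\ell\|_{0,\Omega},
\]
that is, the analogue of \eqref{est:reg} with the Neumann datum included. This is the step I expect to be the main obstacle. Strictly, elliptic shift theorems on polygons require the traction datum $\by$ in $H^{s-1/2}(\Gamma_N)$, and $L^2(\Gamma_N)$ suffices for every $s\le 1/2$. For larger $s$, I would first try to invoke the assumed regularity \eqref{reg} together with the constraint $\by\in\bYd$, which makes $\by$ bounded in $L^\infty$. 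I would then absorb the corresponding norm into the generic constant $C$, following the same convention used in Corollary~\ref{lem:proj} for the distributed case.

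Collecting these bounds and using Korn's inequality to pass between $\|\beps(\cdot)\|$ and $\|\cdot\|_{1}$ gives the stated estimate.
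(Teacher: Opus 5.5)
Your plan follows the paper's own route. The paper gives no separate argument for this lemma; it only remarks that the estimate follows from the abstract Lemma~\ref{lem:dis-proj} together with $L^2$-projection, interpolation and polynomial approximation estimates and the assumed regularity. Your steps are exactly what that remark entails: $B_h=B$ here because the edge integrals $\sum_{e\in\mathcal{E}_h^N}(\by,\bv_h)_{0,e}$ are computed exactly, $F-F_h$ and $L-L_h$ are $O(h)$, and the approximation terms are $O(h^s)$.

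The one step that does not work is your proposed repair for $s>1/2$. Your diagnosis is right, but membership in $\bYd$ gives no help. Take the tractions $\by_k=\tfrac12(\by_a+\by_b)+\tfrac12(\by_b-\by_a)\cos(kx)$ on a straight piece of $\Gamma_N$. They all lie in $\bYd$ and $\|\by_k\|_{0,\Gamma_N}$ stays bounded. However, the oscillating part produces a boundary layer of width $k^{-1}$ in which $|\bu|_{1+s,\Omega}$ grows like $k^{s-1/2}$.

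Consequently, for $s>1/2$ no bound of $|\bu|_{1+s,\Omega}$ by $\|\by\|_{0,\Gamma_N}$, or by $L^\infty$ bounds on $\by$, can exist. The right-hand side as written is therefore only justified for $s\le 1/2$. For $s>1/2$ the quantity that enters is $\|\by\|_{s-1/2,\Gamma_N}$, which is available from the hypothesis $\by\in[H^{1/2+s}(\Gamma_N)]^2$ in \eqref{reg}. That norm must either replace $\|\by\|_{0,\Gamma_N}$ or be hidden in $C$. Hiding it in $C$ is the unstated convention behind the paper's one-line justification (compare the $|\by|_{s+1/2,\Gamma_N}$ in Theorem~\ref{thm:abs-control-N}).

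So drop the $L^\infty$ argument and state the Neumann regularity estimate explicitly as a hypothesis. With that change, your proof is as complete as the paper's.
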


\begin{lemma} \label{lem:proj-adjoint}
	Assume that $\buz \in \bbV$ and $\tP_h \buz \in \bbV_h$ satisfy the equations \eqref{eq:weak-N-d}-\eqref{eq:weak-N-f} and \eqref{eq:Poro-adjoint-N} respectively, then it holds for constant $C$, independent of $h$, and for $s \in (0, 1]$ as
	 \begin{equation}\label{est:adjointproj-N}
	 	\begin{aligned} 
	 		2 \mu \| \bz - \tP_h^u \bz \|_{0, \Omega}^2 + c_0 \| r - \tP_h^p r \|_{0, \Omega}^2 & + \frac{\kappa_1}{\eta} \| \nabla (r - \tP_h^p r )\|_{0, \Omega}^2+ \| \zeta - \tP_h^{\psi} \zeta \|_{0, \Omega}^2 \\
	 		&\qquad \le C h^{2s} (
            \|\by \|_{0, \Gamma_N}^2 + \|\ff \|_{0, \Omega}^2 + \|\ell\|_{0, \Omega}^2 + \| \buu_d\|_{0, \Omega}^2 ).
	 	\end{aligned}
	  \end{equation}
	\end{lemma}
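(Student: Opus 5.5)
The plan is to follow the proof of Lemma~\ref{lem:dis-proj-t}, now with the adjoint poroelastic projection \eqref{eq:Poro-adjoint-N} and the continuous adjoint equations \eqref{eq:weak-N-d}--\eqref{eq:weak-N-f}. The first observation is that the Neumann control does not enter the adjoint system explicitly. The only differences from the distributed case are in the regularity of the adjoint data $\buu-\buu_d$ and in the a priori bound on $\buu$, which now involves $\|\by\|_{0,\Gamma_N}$.

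\textbf{Step 1 (quasi-optimality).} Apply Lemma~\ref{lem:dis-proj-t} with $\langle\cdot,\cdot\rangle_h$ given by $\sum_K(\cdot,\bPi^0_K\,\cdot)_{0,K}$ and $\sum_K(\cdot,\Pi^0_K\,\cdot)_{0,K}$. This bounds the left-hand side of \eqref{est:adjointproj-N} by three kinds of terms:
\begin{itemize}
\item the consistency terms $|(\bu-\bu_d,\bPi^0_K\bv_h-\bv_h)_{0,K}|$ and $|(p-p_d,\Pi^0_Kq_h-q_h)_{0,K}|$;
\item best-approximation errors of $(\bz,r,\zeta)$ in $\bbV_h$;
\item local polynomial errors $\|\bz-\bz_\pi\|_{1,K}$ and $\|r-r_\pi\|_{1,K}$.
\end{itemize}
The third line of the adjoint equations uses the exact $L^2$ product on $Z_h$, so it produces no consistency error. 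I would also correct the harmless typo in that proof, where $a_3(\psi-\bar\phi_h,\phi_h)$ should read $a_3(\zeta-\bar\phi_h,\phi_h)$. The left-hand side of \eqref{est:adjointproj-N} involves $\|\bz-\tP_h^u\bz\|_{0,\Omega}$, which is controlled by $\|\beps(\bz-\tP_h^u\bz)\|_{0,\Omega}$ via Poincar\'e and Korn on $\bV$.

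\textbf{Step 2 (approximation terms).} Take $\bar\bv_h=\bz_I$ and $\bar q_h=r_I$ from Lemma~\ref{lem:interpolant}, take $\bar\phi_h$ as the elementwise mean of $\zeta$, and use Lemma~\ref{lem:polynomial} for $\bz_\pi$ and $r_\pi$. Under \eqref{reg} these give $O(h^s)$ times $|\bz|_{1+s,\Omega}+|r|_{1+s,\Omega}+|\zeta|_{s,\Omega}$. For the consistency terms, use the orthogonality of $\bPi^0_K$ to write
\[
(\bu-\bu_d,(\bPi^0_K-\bI)\bv_h)_{0,K}=((\bI-\bPi^0_K)(\bu-\bu_d),(\bPi^0_K-\bI)\bv_h)_{0,K}\le \|\bu-\bu_d\|_{0,K}\,Ch_K|\bv_h|_{1,K},
\]
and similarly for $p$. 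After dividing by $\|\buw_h\|_{\bbV}$, these terms are $O(h)\|\buu-\buu_d\|_{0,\Omega}$, which is at most $O(h^s)$ since $s\le1$.

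\textbf{Step 3 (data bounds).} Convert the seminorms using the dual regularity \eqref{est:dual-reg}, which is stated for the Neumann setting under \eqref{reg}. The right-hand side of the adjoint problem is $\buu-\buu_d$ rather than $\buu_d$ alone, so I would apply \eqref{est:dual-reg} with data $\buu-\buu_d$. This bounds $|\bz|_{1+s}+|r|_{1+s}+|\zeta|_s$ by $\|\buu\|_{0,\Omega}+\|\buu_d\|_{0,\Omega}$. Then invoke the continuous dependence \eqref{eq:cts_dependence} of Theorem~\ref{thm:Gwelldefined}, in the Neumann case where $\|B\bv\|_{\bY}=\|\bv\|_{0,\Gamma_N}\lesssim\|\bv\|_{1,\Omega}$ by the trace theorem. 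This gives $\|\buu\|_{0,\Omega}\lesssim\|\ff\|_{0,\Omega}+\|\by\|_{0,\Gamma_N}+\|\ell\|_{0,\Omega}$. Squaring and collecting terms yields \eqref{est:adjointproj-N}.

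\textbf{Main obstacle.} I expect the delicate point to be robustness in the parameters. Lemma~\ref{lem:dis-proj-t} carries factors such as $1/\lambda$ and $\kappa_2/\eta$, and the coercivity of $a_{2,h}$ involves $c_0$, which may degenerate. My approach would be to keep the weighted norms on the left exactly as in \eqref{est:adjointproj-N}. For the $\zeta$-component I would use the discrete inf-sup of $b_1$ rather than $a_3$, as in Lemma~\ref{lem:dis-proj}, so that $C$ depends only on $\mu,\kappa_i,\eta,\beta$ and the mesh regularity constants, and not on $\lambda$ or a lower bound for $c_0$.
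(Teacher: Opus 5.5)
Your proposal is correct and follows essentially the same route as the paper, which obtains this lemma by combining the abstract quasi-optimality estimate of Lemma~\ref{lem:dis-proj-t} with interpolation, local polynomial and $L^2$-projection approximation bounds and the regularity assumptions. Two of your steps make explicit what the paper leaves implicit: applying the adjoint regularity with data $\buu-\buu_d$, together with the continuous-dependence bound and the trace inequality, is exactly what produces the $\|\by\|_{0,\Gamma_N}$, $\|\ff\|_{0,\Omega}$ and $\|\ell\|_{0,\Omega}$ terms, and your inf-sup treatment of the $\zeta$-component properly replaces the loose coercivity step in that lemma's proof.
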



\noindent We follow the estimates from Theorem~\ref{thm:abs-control} and Corollary~\ref{cor:abs-state} and achieve the following result.
\begin{theorem} \label{thm:abs-control-N}
	Assume that $(\buu,\by,\buz) \in \bbV \times \bYd \times \bbV$ and $(\buu_h, \by_h, \buz_h) \in \bbV_h \times \bY_{d,h} \times \bbV_h$ are the solutions of equations \eqref{eq:weak-N} and \eqref{eq:weak-h-N} respectively. The following estimate holds for $0 < s \le 1$,
	\begin{align*}
		\sum_{e \in \mathcal{E}_h^N} \| \by - \by_h \|_{0,e}^2 & 
        + \| \buu - \buu_h \|_{\bbV}^2 
        + \| \buz - \buz_h \|_{\bbV}^2 \le C h^{2s} (h^{\frac{1}{2}} |\by|_{s+\frac{1}{2}, \Gamma_N} 
        + \|\ff \|_{0, \Omega} + \|\ell\|_{0, \Omega})^2. 
	\end{align*}
\end{theorem}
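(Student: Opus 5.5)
The approach is to specialize the abstract results, Theorem~\ref{thm:abs-control} and Corollary~\ref{cor:abs-state}, to the Neumann setting. Here $\bY=[L^2(\Gamma_N)]^2$, $B\bv=B_h\bv=\bv|_{\Gamma_N}$, and $\bPi^Y$ is the edgewise $L^2(e)$ projection onto constants for $e\in\mathcal{E}_h^N$. The poroelastic projections are those of \eqref{eq:Poro-proj-N} and \eqref{eq:Poro-adjoint-N}.

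\emph{Step 1 (abstract bound).} Since $B_h=B$ is the exact trace, the term $\langle \by,(B-B_h)\bv_h\rangle_{\bY}$ in Lemma~\ref{lem:dis-proj} vanishes. The proof of Lemma~\ref{lem:est-bound-h} and Theorem~\ref{thm:abs-control} then applies verbatim. I would redo that proof with the discrete inner products $\langle\cdot,\cdot\rangle_h$ built from $\bPi^0_K$ and $\Pi^0_K$, exactly as in Theorem~\ref{thm:control-est}. This gives
\begin{align*}
\| \by - \by_h \|_{0,\Gamma_N}^2 + \triplenorm{P_h\buu-\buu_h}_h^2 \lesssim \| \bz - \bPi^Y \bz \|_{0,\Gamma_N}^2 + \| \by - \bPi^Y \by \|_{0,\Gamma_N}^2 + \| \bz - \tP_h^u \bz\|_{0,\Gamma_N}^2 + \| \buu - P_h \buu \|_{\bbV}^2 .
\end{align*}
In Theorem~\ref{thm:abs-control}, $B_h(\bz-\tP_h^u\bz)$ is a trace, so the third term must be measured on $\Gamma_N$.

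\emph{Step 2 (bounding each term).} The projection lemma for the Neumann case gives $\|\bz-\bPi^Y\bz\|_{0,\Gamma_N}\lesssim h^{1/2+s}\|\bz\|_{1+s,\Omega}$ and $\|\by-\bPi^Y\by\|_{0,\Gamma_N}\lesssim h^{1/2+s}|\by|_{1/2+s,\Gamma_N}$, which accounts for the $h^{1/2}$ factor on the $\by$ term. For the trace term I use the multiplicative trace inequality $\|w\|_{0,\Gamma_N}^2\lesssim \|w\|_{0,\Omega}\|w\|_{1,\Omega}$ with $w=\bz-\tP_h^u\bz$. Together with Lemma~\ref{lem:proj-adjoint} (the Neumann version) and Korn's inequality, this gives $\|w\|_{0,\Gamma_N}\lesssim h^{s}(\cdots)$. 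The term $\|\buu-P_h\buu\|_{\bbV}$ is bounded by $h^s(\|\ff\|_{0,\Omega}+\|\by\|_{0,\Gamma_N}+\|\ell\|_{0,\Omega})$ via Lemma~\ref{lem:proj} (Neumann version), again using Korn. The regularity norms of $\bz$ and $\by$ are absorbed into the data through \eqref{reg} and the a priori bound \eqref{eq:cts_dependence}.

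\emph{Step 3 (energy errors).} I follow the proof of the distributed corollary. Testing the error equation \eqref{eq:er1} with $P_h\buu-\buu_h$ and using coercivity, the discrete inf--sup condition, and the trace inequality on $\bv_h$ gives $\|P_h\buu-\buu_h\|_{\bbV}\lesssim\|\by-\by_h\|_{0,\Gamma_N}$. Similarly, testing \eqref{eq:er2} with $\tP_h\buz-\buz_h$ gives $\|\tP_h\buz-\buz_h\|_{\bbV}\lesssim\|\buu-\buu_h\|_{0,\Omega}$. The triangle inequality with Steps 1--2 then closes the estimate for all three error components.

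\emph{Main obstacle.} The delicate point is the boundary term $\|\bz-\tP_h^u\bz\|_{0,\Gamma_N}$. Only $H^1$-type projection estimates of order $h^s$ are available; the $L^2(\Omega)$ estimate of order $h^{1+s}$ would need a duality argument not provided by the paper. The trace inequality still yields order $h^s$, which suffices for the claimed rate. The $\|\by\|_{0,\Gamma_N}$ contributions are controlled by the data via the bounded admissible set $\bYd$, so they can be folded into the constant $C$.
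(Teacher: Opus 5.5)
Your proposal is correct and takes the same route as the paper. The paper likewise specializes Theorem~\ref{thm:abs-control} and Corollary~\ref{cor:abs-state} to the case where $B=B_h$ is the trace on $\Gamma_N$, via the Neumann analogue of Lemma~\ref{lem:est-bound-h}, and then applies the Neumann projection estimates. Your trace-inequality treatment of $\|\bz-\tP_h^u\bz\|_{0,\Gamma_N}$ and your absorption of $\|\by\|_{0,\Gamma_N}$, $\|\buu_d\|_{0,\Omega}$ and the regularity norms into $C$ only make explicit what the paper's sketch leaves implicit; the stated right-hand side does require $C$ to depend on these quantities.
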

The proof of this theorem follows exactly from abstract results in Theorem~\ref{thm:abs-control} and using the following bounds:
	\begin{align*}
		\sum_{e \in \mathcal{E}_h^N} \big( \bz_h - \tP_h^u \bz, \by - \by_h \big)_{0,e} \ge  \sum_{e \in \mathcal{E}_h^N} \Big( \gamma \| \by - \by_h \|_{0,e}^2 + \big( \bz - \tP_h^u \bz, \by - \by_h \big)_{0,e} +  \big( \bz_h + \gamma \by_h, \by - \bx_h \big)_{0,e} \Big),
	\end{align*}
for any $\bx_h \in \bY_{d,h}$.

	\section{Numerical Experiments} \label{sec:numer}

    In this section, we will present a few numerical tests through the PDAS strategy \cite{troltzsch10} for solving the finite-dimensional optimization problem \eqref{eq:weak-h-D}. 

    For construction of the stiffness matrix, the discrete solutions are expanded in terms of basis functions associated with the chosen VEM spaces for displacement, pressure, total pressure, and control. 
    Say $\xi_i ( 1 \le i \le N^{\bV})$,
	$\chi_j (1 \le j \le N^Q)$, 		
	$\Psi_l ( 1 \le l \le N^Z$, where $N^Z$ coincides with the number of elements in $\mathcal{T}_h$) and $e_l (1 \le l \le N^Y)$ are the basis functions for the spaces $\bV_h, Q_h, Z_h$ and $\bY_{d,h}$ respectively. 
    Writing the discrete solution as linear combinations of these basis functions as,
        \begin{gather*}
            \bu_h (\bx) := \sum_{i=1}^{N^{\bV}} \mathcal{U}(i) \, \xi_i(\bx), \,\, p_h (\bx) := \sum_{j=1}^{N^Q} \mathcal{P}(j)\,  \chi_j(\bx), \,\, \psi_h (\bx) := \sum_{l=1}^{N^Z} \mathcal{S}(l) \, \Psi_l(\bx),\,\,  \bz_h (\bx) := \sum_{i=1}^{N^{\bV}} \mathcal{Z}(i) \, \xi_i(\bx), \\
            r_h (\bx) := \sum_{j=1}^{N^Q} \mathcal{R}(j)\,  \chi_j(\bx), \quad 
            \zeta_h (\bx) := \sum_{l=1}^{N^Z} \mathcal{T}(l) \, \Psi_l(\bx), \, \text{ and } \,\, \by_h (\bx) :=  \sum_{l=1}^{N^Y} \mathcal{Y}(l) \, e_l(\bx).
        \end{gather*}
     We now assemble the discrete system into its global block structure as,
		\begin{equation} \label{compute:localmat}
        \begin{aligned}
		A=	\begin{bmatrix}
				A1 & \cero & B1 \\
				\cero & A2 & -B2 \\
				B1^T & B2^T & -A3
			\end{bmatrix}, \,\,
		\vec{\mathcal{U}}=
			\begin{bmatrix}
				\mathcal{U} \\ 
				\mathcal{P} \\
				\mathcal{S} 
			\end{bmatrix}, &
			\,\,
		A^*=\begin{bmatrix}
				A1 & \cero & B1 \\
				\cero & A2 & B2 \\
				B1^T & -B2^T & -A3
			\end{bmatrix},
			\, \,
		\vec{\mathcal{Z}}=
			\begin{bmatrix}
				\mathcal{Z} \\ 
				\mathcal{R} \\
				\mathcal{T}
			\end{bmatrix} \\
        M=\begin{bmatrix}
				M1 & \cero & \cero \\
				\cero & M2 & \cero \\
				\cero & \cero & M3
			\end{bmatrix}, &
		\quad \text{and } \quad
		{\mu}:= - \big( {\mathcal{Y}} + \gamma^{-1} \mathcal{Z} \big).
        \end{aligned}
		\end{equation}
	where local matrices for the respective bilinear forms are computed as,
	\begin{align*}
		{A1}&:=	
			\begin{bmatrix}
				[a_{1,h}(\xi_i, \xi_r)]_{1 \le i,r \le N^{\bV}}
			\end{bmatrix}, \qquad  
        B1 :=	\begin{bmatrix}
			[b_1(\xi_r, \varphi_l)]_{1 \le r \le N^{\bV}, \, 1 \le l \le N_Z}  
			\end{bmatrix}, \\
		{A2}&:=	
			\begin{bmatrix}
				[a_{2,h}(\chi_i, \chi_r)]_{1 \le i,r \le N^{Q}}
			\end{bmatrix}, 
		\qquad B2 :=	\begin{bmatrix}
					[b_2(\chi_r, \varphi_l)]_{1 \le r \le N^Q, \, 1 \le l \le N_Z}  
			\end{bmatrix}, 	\\
		{A3}&:=	\begin{bmatrix}
					[a_{3}(\varphi_i, \varphi_r)]_{1 \le i,r \le N^{Z}}
				\end{bmatrix}, 
		\qquad  M1 := \begin{bmatrix}
				[(\bPi^{0}_K \xi_i, \bPi^{0}_K\xi_r)_{0,K}]_{1 \le i,r \le N^{\bV}}
			\end{bmatrix}, \\
        M2 & := \begin{bmatrix}
				[(\Pi^{0}_K \chi_i, \Pi^{0}_K\chi_r)_{0,K}]_{1 \le i,r \le N^{Q}}
			\end{bmatrix}, \qquad 
        M3  := \begin{bmatrix}
				[(\Psi_i, \Psi_r)_{0,K}]_{1 \le i,r \le N^{Z}}
			\end{bmatrix}, \\
        \vec{\mathcal{F}} &:=
			\begin{bmatrix} 
            [\sum_{K \in \cT_h} (\ff, \bPi^{0}_K \xi_r)_{0,K} ]_{1 \le r \le N^{\bV}} \\
			[ \sum_{K \in \cT_h}\left( \ell, \Pi^0_K \chi_j  \right)_{0,K}]_{1 \le j \le N^Q} \\ 
			[ \cero_j]_{1 \le j \le N^Z}
			\end{bmatrix},
            \qquad
        \vec{\mathcal{U}}_d :=
			\begin{bmatrix} 
            [ \sum_{K \in \cT_h}\left( \bu_d, \bPi^0_K \xi_j  \right)_{0,K}]_{1 \le j \le N^{\bV}} \\
			[ \sum_{K \in \cT_h}\left( p_d, \Pi^0_K \chi_j  \right)_{0,K}]_{1 \le j \le N^Q} \\ 
			[ \sum_{K \in \cT_h}\left( \psi_d, \Pi^0_K \varphi_j  \right)_{0,K}]_{1 \le j \le N^Z}
			\end{bmatrix} \\
        B &:= \begin{bmatrix}
				[ \langle B_h \xi_i, e_l \rangle_{\bY}]_{1 \le i \le N^{\bV}, \, 1 \le l \le N^Y} \\
                \cero_{1 \le i \le N^{Q} + N^Y}
			\end{bmatrix},  \quad
            \text{ and } \quad
        D := \begin{bmatrix}
				[ (e_l, e_m)_{0,K}]_{ 1 \le l,m \le N^Y}
			\end{bmatrix}.
	\end{align*}

    \bigskip
    
    \bigskip
    
    \noindent
    We present below the algorithmic procedure followed for conducting the numerical tests. The finite‑dimensional optimization problem is solved using the PDAS strategy \cite{troltzsch10}, which iteratively updates the active set by checking the control constraints against the projection formula, while solving the state and adjoint equations on the inactive set. 
    
    \begin{algorithm*} \label{alg}
	\caption{Implementation} 
    {
	\begin{algorithmic}[1]
    \State Given tolerance $\text{tol}$ and number of iterations $N$
    \State Compute load vectors $\vec{\mathcal{F}}$, data vector $\vec{\mathcal{U}_d}$, and local matrices $A, A^*, B, M$ defined in \eqref{compute:localmat}
    \While {$\text{error}<\text{tol}$}
		\For {$n=1,2,\ldots,N$}
            \State Start with $\mathcal{Y}^0$, $\vec{\mathcal{U}}^0$, $\vec{\mathcal{Z}}^0$ as zero vector
            \State Find Active sets: for $\mu^{n-1}$ in \eqref{compute:localmat},
            \begin{align*}
            A_n^a = \{ i \in \{ 1, \dots, N^Y \}: {\mathcal{Y}}^{n-1}(i) + 
            {\mu}^{n-1}(i) < y_a  \} \\
            A_n^b = \{ i \in \{ 1, \dots, N^Y \}: {\mathcal{Y}}^{n-1}(i) + 
            {\mu}^{n-1}(i) > y_b  \}
            \end{align*}
            \State Compute matrices cooresponding to the active sets:
            \begin{align*}
                X_a(i,i)= 
                \begin{cases}
                1 \, \text{ if } i \in A_n^a \\
                0 \quad \text{elsewhere}
                \end{cases}, 
                \quad \text{and} \quad \mathcal{Y}_a(i)=y_a \quad \forall \,1\le i\le N^Y.
            \end{align*}
            \State Compute local matrices:
            \begin{gather*}
            C  := (\gamma \, D)^{-1} (I - X_a - X_b) B^T, \quad  \text{and} \quad
            {\mathcal{Y}}_r :=X_a {\mathcal{Y}}_a +X_b {\mathcal{Y}}_b,
            \end{gather*}
            \State {Solve for $\vec{\mathcal{U}}^n $, $\vec{\mathcal{Z}}^n$ and ${\mathcal{Y}}^n$ for $n\ge 1$ such that}
            \begin{align*}
			\begin{bmatrix}
				A & \cero & -B \\
				-M & A^* & \cero \\
				\cero & C & I
			\end{bmatrix}
			\begin{bmatrix}
				\vec{\mathcal{U}}^n \\
				\vec{\mathcal{Z}}^n \\
				{\mathcal{Y}}^n
			\end{bmatrix}
			= \begin{bmatrix}
				\vec{\mathcal{F}}  \\
				-\vec{\mathcal{U}_d} \\
				{\mathcal{Y}_r}
			\end{bmatrix} 
            \end{align*}
            \State Evaluate error for step $n$ with exact solution 
		\EndFor
    \EndWhile
	\end{algorithmic} 
    }
\end{algorithm*}

        Denote the number of elements in the mesh by $N$, the mesh-size for the refinement level $i=1,2, \dots, 6$ is denoted by $h_i$, and the $L^2, \,H^1-$ error associated with the variables are denoted as $e_0^{i}(\cdot), \, e_1^i(\cdot)$ respectively.
        The rates of convergence $r_0^i, \, r_1^i, \, i=2,3, \dots, 6$ are computed as 
        \[r_0^i(\cdot) = \frac{\log(e_0^{i}(\cdot))- \log(e_{0}^{i-1}(\cdot))}{\log(h_i)- \log(h_{i-1})}, \quad r_1^i(\cdot) = \frac{\log(e_1^{i}(\cdot))- \log(e_{1}^{i-1}(\cdot))}{\log(h_i)- \log(h_{i-1})}
        . \]

        \begin{figure}[t!]
            \centering
            \includegraphics[width=0.45\linewidth]{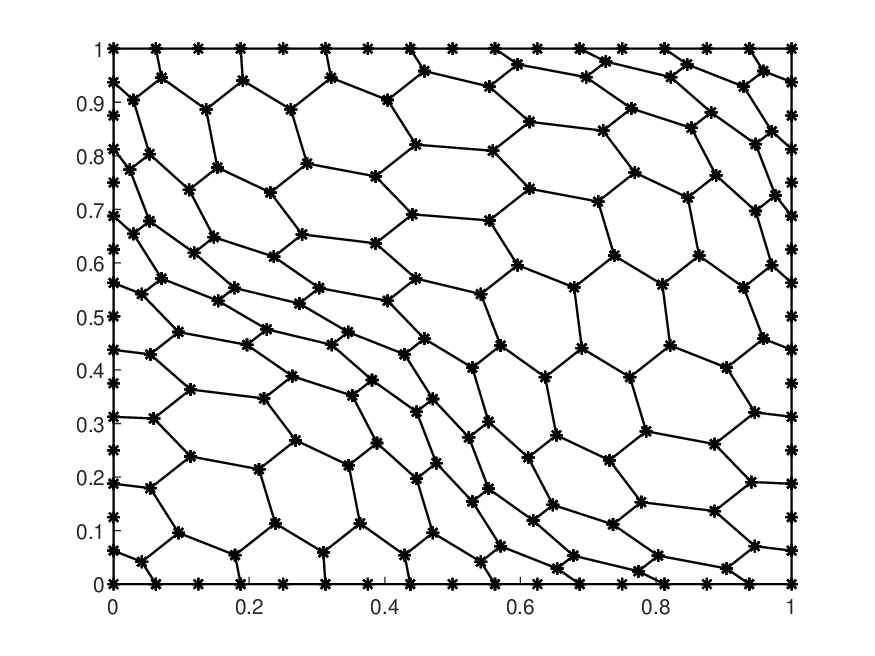}
            \includegraphics[width=0.45\linewidth]{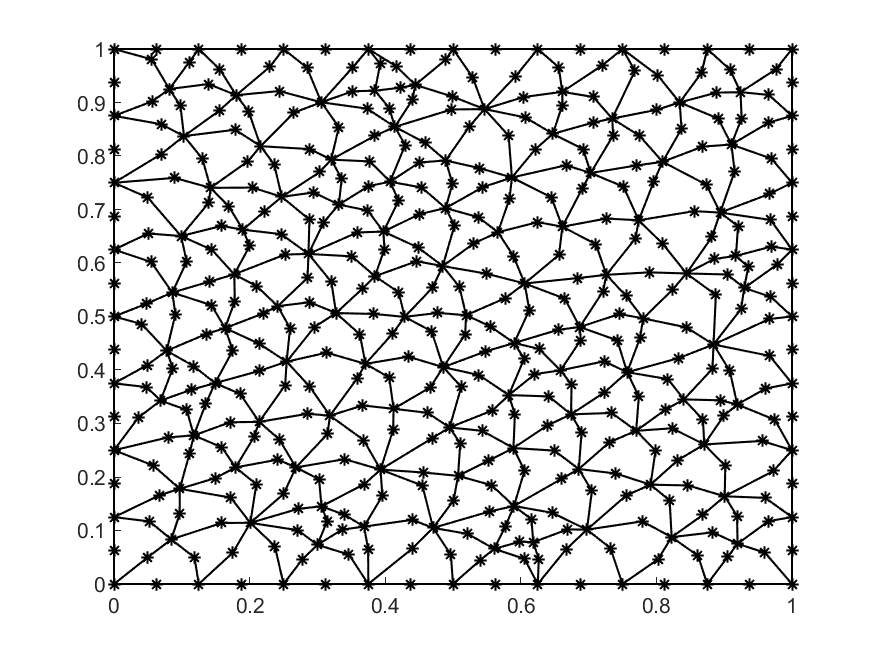}
            \caption{Distorted hexagonal mesh(left) with  $h=0.046$ and web-like mesh(right) with $h=0.0445$.}
            \label{fig:webmesh}
        \end{figure}

		\subsection{Convergence test with Distributed control} \label{numer:test1}
		
		We consider a manufactured example on unit square domain $\Omega:= (0,1)^2$ and discretized with the web-like mesh given in Figure~\ref{fig:webmesh}.
        The exact solution of displacement state, adjoint of displacement, pressure state, and adjoint of pressure are given as, 
		\begin{align*}
			\bu = \bz=
			\begin{pmatrix}
				- \cos 2 \pi x \,\sin 2 \pi y+ \sin 2\pi y+\sin^2 2 \pi x \sin^2 2\pi y \\
				\sin 2 \pi x \, \cos 2 \pi y- \sin 2 \pi x
			\end{pmatrix}, \quad
			p=r= \sin^2\pi x \, \sin^2\pi y.
		\end{align*}
		The physical parameters are chosen as
		\begin{align*}
			\nu=0.4, \, \text{Ec}=100, \, \kappa=1, \, \alpha=1, \, c_0=0.001,\, \eta=0.1, \, \lambda=\frac{\nu \, \text{Ec}}{(1+ \nu) (1-2 \, \nu)}, \, \text{ and } 		\mu=\frac{\text{Ec}}{2+2 \, \nu}.
		\end{align*}
        The Dirichlet boundary conditions are imposed for displacement and the Neumann boundary conditions for pressure on the entire boundary $\Gamma_D = \partial \Omega$ (that is $\Gamma_N = \emptyset$). The problem is then solved by constructing the load functions $\ff, \ell$, total pressure state $\psi$, adjoint of total pressure $\chi$, desired data $\bu_d, \, p_d, \, \psi_d$, and control $\by$ directly from the manufactured solution $\bu, \bz, p,  r$ and the parameters in the optimality system \eqref{eq:optimal-abstract}. We set the tolerance as $1e-06$, the maximum number of PDAS iteration as $20$, choose the penalty parameter for the cost functional as $\gamma=0.1$, and the admissible control space $\bY_d$ with $y_a=-1$ and $y_b=8.$ The discrete scheme is then implemented by following Algorithm~\ref{alg}, and the resulting numerical errors are reported in Table~\ref{table-Ex1}.
		
		\begin{table}[ht!]
			\setlength{\tabcolsep}{4.5pt}
			\centering 
			\caption{Computed errors for approximated state, adjoint and control solutions on distorted hexagonal mesh.}
			{\small
					\begin{tabular}{|rc|cccc|cccc|cc|cc|} 
					\hline\hline
					N   &   $h$  & $\mathrm{e}_0(\bu)$  &   $r_0(\bu)$   &   $\mathrm{e}_1(\bu)$  &   $r_1(\bu)$  & $\mathrm{e}_0(p)$  &   $r_0(p)$   &   $\mathrm{e}_1(p)$  &   $r_1(p)$ & $\mathrm{e}_0(\psi)$  &   $r_0(\psi)$   &   $\mathrm{e}_0(\by)$  &   $r_0(\by)$ \\
					\hline 
468 & 0.0462 & 0.4359   &  $\star$ & 0.7583  &   $\star$ & 0.15955 &   $\star$ & 0.5084 &   $\star$ & 0.7202  &   $\star$ & 0.9877 &   $\star$ \\
1636 & 0.0247 & 0.1302   &  1.74  &  0.3362  & 1.17  & 0.04914 &   1.70  &  0.2752   & 0.89 &  0.3173   & 1.18 &  0.8586  & 0.20 \\
6084 & 0.0128 & 0.0361  & 1.85 &  0.1598   &  1.07 & 0.01383 &   1.83  &  0.1431  &  0.94 &  0.1502  &  1.08 &  0.5065 &  0.76  \\
23428 & 0.0065 & 0.0094  &  1.93  &  0.0788 &  1.02 &  0.00363 & 1.93 & 0.07289  &  0.97 &  0.07402  &  1.02 &  0.19985  &  1.34 \\
91908 & 0.0033 & 0.0024 & 1.97  &  0.0393  &  1.00 & 0.00092 &  1.97 & 0.03679 &  0.99 &  0.03697 &  1.00  &  0.06863 &  1.54 \\
364036 & 0.0017 & 0.00061 & 1.99  &  0.0197  &  1.00 & 0.00023 &  1.99 & 0.01847 &  0.99 &  0.01851 &  1.00 &  0.02685 &  1.35 \\
					\hline
					\hline
			\end{tabular}}
			\smallskip			
			\label{table-Ex1}
		\end{table}

        In Table~\ref{table-Ex1}, we observed the optimal rate of convergence for the state, adjoint and control variables. Thus, we have verified the established theoretical results for distributed control problem for this manufactured example. The postprocessed control solution is illustrated in Figure~\ref{fig:control}, 
        \begin{figure}[h!]
            \centering
            \includegraphics[width=0.61\linewidth]
            {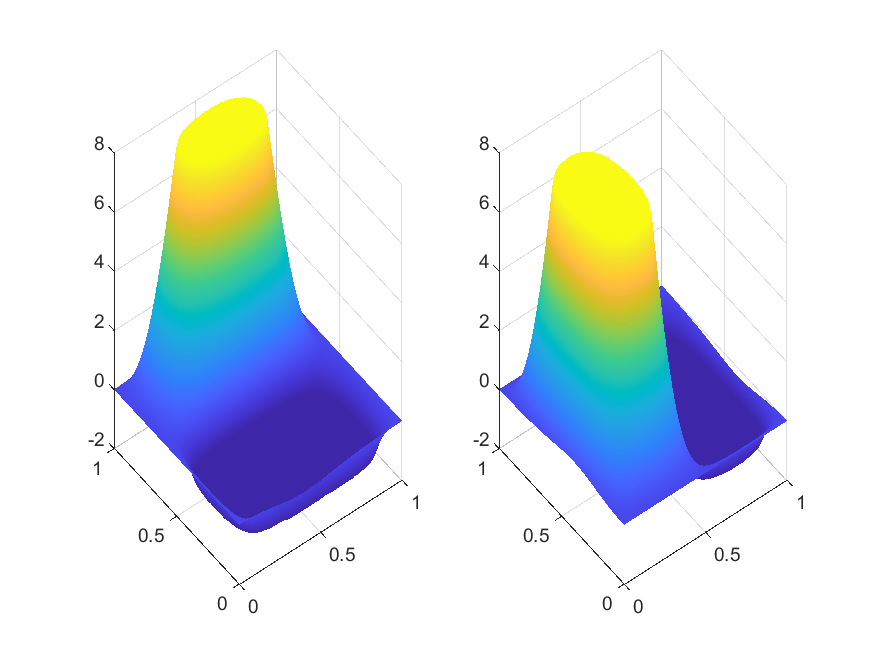}
            \caption{First and second components of post-processed plot of the computed piecewise constant control solution $\by_h$.}
            \label{fig:control}
        \end{figure}
        where the bounded control variable is projected onto the admissible set $\bYd$. The visualization highlights how the PDAS strategy enforces the control constraints while maintaining accuracy. The control field exhibits smooth behavior consistent with the prescribed manufactured solution, confirming both the robustness of the VEM discretization and the efficiency of the PDAS implementation.

	\subsection{Convergence test with Neumann control}
    \label{numer:test2}
    
    In this test, we take the domain $\Omega$ given in the test \ref{numer:test1}, where the boundary is partitioned into a non‑homogeneous Neumann boundary $\Gamma_N = [0,1] \times \{ 0\}$ and homogeneous Dirichlet boundary $\Gamma_D = \partial \Omega \backslash \Gamma_N$ for displacement state and adjoint variables. 
    The manufactured exact solutions are chosen to ensure smoothness and non‑trivial boundary behavior, and are prescribed as,
    \begin{align*}
			\bu = \bz=
			\begin{pmatrix}
				\cos x (1-x) \sin^2(\pi y) \\
				\cos x (1-x) \sin^2(\pi y)
			\end{pmatrix}, \quad
			p=r= \sin^2\pi x \, \sin^2\pi y.
		\end{align*}
    These exact solutions naturally lead to construct the load functions $\ff, \ell$, data functions $\bu_d, p_d, \psi_d$ for computation. The physical parameters are set to
		\begin{align*}
			\nu=0.35, \, \, \text{Ec}=100, \, \kappa=1, \, \alpha=1, \, c_0=0.5,\, \eta=0.1, \, \lambda=\frac{\nu \, \text{Ec}}{(1+ \nu) (1-2 \, \nu)}, \, \text{ and } 		\mu=\frac{\text{Ec}}{2+2 \, \nu}.
		\end{align*}
    The admissible space for the control $\bYd$ is defined with the bound $y_a=-0.85$, $y_b=0.05$ and penalty parameter as $\gamma=1$, and thus, the exact control solution is $\by= \Pi_{[a,b]} \left( - \bz \right).$

        \begin{figure}[ht!]
            \centering
            \includegraphics[width=0.47\linewidth]
            {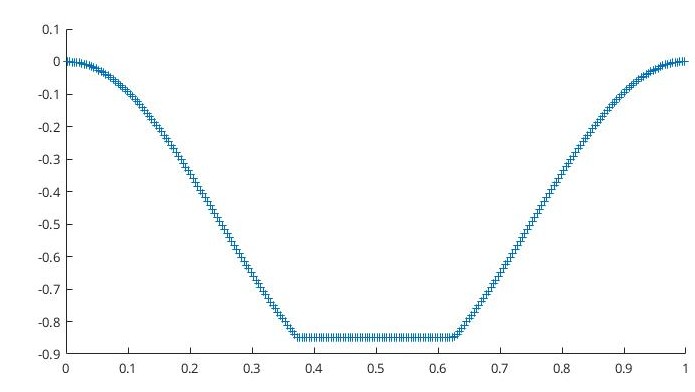}
            \qquad 
            \includegraphics[width=0.47\linewidth]
            {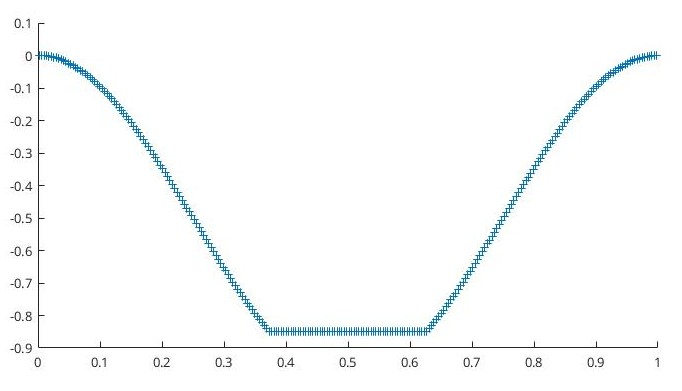}
            \caption{First and second components of plot of the computed piecewise constant control solution $\by_h$.}
            \label{fig:control_eg2}
        \end{figure}
        
        The computed control solution (shown on the left) and the exact control solution (shown on the right) are plotted along the Neumann boundary in Figure~\ref{fig:control_eg2}. This visual comparison highlights the accuracy of the numerical approximation in reproducing the prescribed boundary actuation. The close agreement between the two plots demonstrates that the discrete scheme captures the essential features of the control variable, thereby validating the theoretical convergence rates established for Neumann boundary control problems.


        \begin{table}[ht!]
			\setlength{\tabcolsep}{4.5pt}
			\centering 
			\caption{Computed errors of control variable and state variables.}
			{\small
					\begin{tabular}{|rc|cccc|cccc|cc|cc|} 
					\hline\hline
					N   &   $h$  & $\mathrm{e}_0(\bu)$  &   $r_0(\bu)$   &   $\mathrm{e}_1(\bu)$  &   $r_1(\bu)$  & $\mathrm{e}_0(p)$  &   $r_0(p)$   &   $\mathrm{e}_1(p)$  &   $r_1(p)$ & $\mathrm{e}_0(\psi)$  &   $r_0(\psi)$   &   $\mathrm{e}_0(\by)$  &   $r_0(\by)$ \\
					\hline 
312 & 0.0566 & 0.43999  &  $\star$ & 0.80689 &   $\star$ & 0.21523 &   $\star$ & 0.6463 &   $\star$ & 0.6988 &   $\star$ & 0.85105 &   $\star$ \\
944 & 0.0325 &  0.08071  &  2.45  &  0.35206   & 1.20  & 0.08408 &   1.36  &  0.3633   & 0.83 &  0.3191 &   1.13 &  0.83174  &  0.03 \\
4208 & 0.0154 & 0.01984  & 2.02 &  0.15529  &  1.18 & 0.01767 &   2.25  &  0.1635  &  1.15 &  0.1400 &  1.20 &  0.41757 & 0.99 
\\
17336 & 0.00759 & 0.00496  &  2.00  &  0.07844 &  0.99 &  0.00452 & 1.97 & 0.0820  &  1.00 &  0.0706 &  0.99 &  0.09143  &   2.19 \\
69592 & 0.00379 & 0.00117 & 2.08  &  0.03445  &  1.03 & 0.00112 &  2.02 & 0.0407 &  1.01 &  0.0345  & 1.03 &  0.02790 &  1.71 
\\
281344 & 0.00189 & 0.00030 & 1.99  &  0.01916  &  1.00 & 0.00028 & 1.98 & 0.0204 &  0.99 &  0.0171  & 1.01 & 0.00822 & 1.76 \\
					\hline
					\hline
			\end{tabular}}
			\smallskip			
			\label{table-Ex2}
		\end{table}
   	The computed errors for the state, adjoint, and control variables are reported in Table~\ref{table-Ex2}, where successive mesh refinements clearly demonstrate the expected convergence behavior. The numerical values confirm that the displacement state and adjoint variables achieve optimal accuracy under the imposed mixed boundary conditions, while the boundary control along $\Gamma_N$ converges consistently to the exact solution. The tabulated results not only validate the theoretical error estimates but also highlight the robustness of the scheme, showing that the control approximation remains stable and accurate without oscillations. This comprehensive error analysis reinforces the effectiveness of the proposed framework for Neumann boundary control problems.
    
    Compared with the distributed control case, this test emphasizes boundary‑localized actuation and demonstrates that the proposed framework is equally effective for both domain‑based and boundary‑based control formulations. Overall, the experiment establishes that the method is robust, stable, and capable of delivering optimal convergence rates for Neumann boundary control problems, thereby validating its applicability to practical engineering scenarios involving stress or flux control on boundaries.

			\section{Conclusion and future works}

            In this paper, we have developed a conforming virtual element scheme for optimal control problem (OCP) governed by three-field poroelasticity equation. The wellposedness of the three-field formulation of OCP is established and then developed the a priori error analysis of the discrete scheme. The numerical experiment verifies the optimal convergence rates for state, costate and control variables supporting the theoretical results. Further, the analysis can be extended for the time-dependent poroelasticity equation with distributed and Neumann boundary control problems.
            
            A natural extension of the present study is to investigate the virtual element approximations for the distributed and Neumann controls on pressure for poroelasticity equations. In addition, the future works will focus on extending the analysis to the numerical approximation for the Dirichlet control problems \cite{gudi25}. Unlike distributed or Neumann controls, Dirichlet controls impose constraints directly on boundary displacements, which introduces new analytical challenges such as handling non‑homogeneous boundary conditions and ensuring stability under low regularity assumptions.

			\bigskip
			\small 	
			\noindent\textbf{Acknowledgements.}
			The second author was supported by ANID-Chile through FONDECYT project 1261217 and
            by project Centro de Modelamiento Matem\'atico (CMM), FB210005, BASAL funds for centers of excellence. 
			Third author was supported by FONDECYT postdoctoral funding project 3240737 and by project Centro de Modelamiento Matem\'atico (CMM), FB210005, BASAL funds for centers of excellence.
			
			\small 	
			\noindent\textbf{Data Availability.} Enquiries about data availability should be directed to the authors.
			
			\small 	
			\noindent\textbf{Conflicts of interest.} The authors have no conflicts of interest to declare.
			

		\end{document}